\documentclass[12pt]{article}

 \usepackage{amsmath,amssymb,amscd,amsthm,esint}
 
\usepackage{graphics,amsmath,amssymb,amsthm,mathrsfs}

\usepackage{graphics,amsmath,amssymb,amsthm,mathrsfs,amsfonts,accents}
\usepackage{appendix}

\usepackage{sidecap}
\usepackage{float}
\usepackage{extarrows}
\usepackage{booktabs}
\usepackage{verbatim}
\usepackage{hyperref}
\usepackage[usenames,dvipsnames]{xcolor}

\setlength{\paperwidth}{8.5in} \setlength{\paperheight}{11.0in}
\setlength{\textwidth}{6.5in} \setlength{\textheight}{9.0in}
\setlength{\oddsidemargin}{0in} \setlength{\evensidemargin}{0in}
\setlength{\topmargin}{0in} \setlength{\headsep}{0.0in}
\setlength{\headheight}{0.0in} \setlength{\marginparwidth}{0in}
\setlength{\marginparsep}{0in}

\newtheorem{thm}{Theorem}[section]
\newtheorem{lemma}[thm]{Lemma}
\newtheorem{cor}[thm]{Corollary}

\theoremstyle{definition}
\newtheorem{remark}[thm]{Remark}

\def\XXint#1#2#3{{\setbox0=\hbox{$#1{#2#3}{\int}$}
         \vcenter{\hbox{$#2#3$}}\kern-.5\wd0}}

\def\R{\mathbb{R}}
\def\C{\mathbb{C}}
\def\e{\varepsilon}

\def\Rd{\mathbb{R}^{d}}

\def\Cd{\mathbb{C}^d}
\def\Cdd{\mathbb{C}^{d\times d}}

\def\hW{\accentset{\circ}{W}}

\def\loc{{\rm loc}}

\def\H{\mathbb{H}}

\numberwithin{equation}{section}

\begin{document}

\title{Resolvent Estimates  in $L^\infty$  for the Stokes Operator \\ in  Nonsmooth Domains}

\author{Jun Geng  \thanks{Supported in part by NNSF grant 12371096.}
\qquad
Zhongwei Shen \thanks{Supported in part by NSF grant DMS-2153585.}}
\date{}

\maketitle

\begin{abstract}

We establish resolvent estimates in spaces of bounded solenoidal functions for the Stokes operator in 
a bounded domain $\Omega$ in $\R^d$ under the assumptions that $\Omega$ is $C^1$ for $d\ge 3$ and
Lipschitz for $d=2$.
As a corollary, it follows that the Stokes operator generates a uniformly bounded analytic semigroup
in the spaces of bounded solenoidal functions in $\Omega$.
The smoothness conditions on $\Omega$ are sharp.
The case of exterior domains with nonsmooth boundaries  is also studied.
The key step in our proof involves  new estimates that connect the pressure to the gradient of the  velocity in the $L^q$ average, but only on scales above certain level. 

\medskip

\noindent{\it Keywords}: Resolvent Estimate; Stokes Operator; Bounded Function; Nonsmooth Domains.

\medskip

\noindent {\it MR (2020) Subject Classification}: 35Q30.

\end{abstract}

\section{Introduction}

This paper studies the resolvent  problem for the  Stokes operator with the Dirichlet condition,
\begin{equation}\label{eq-0}
\left\{
\aligned
-\Delta  u +\nabla p +\lambda u & = F  & \quad & \text{ in } \Omega, \\
\text{\rm div}(u) & =0 & \quad  & \text{ in } \Omega,\\
u&=0 & \quad & \text{ on } \partial\Omega,
\endaligned
\right.
\end{equation}
where $\Omega$ is a domain in $\R^d$, $\lambda\in \Sigma_\theta $ is a  (given) parameter, and 
\begin{equation}\label{Sigma}
\Sigma_\theta
=\left\{
z\in \mathbb{C}\setminus \{ 0\}: \ |\text{arg} (z)|< \pi -\theta
\right\}
\end{equation}
for $\theta \in (0, \pi/2)$.
By the Lax-Milgram theorem it follows that for any $F\in  L^2(\Omega; \C^d)$, there exist a unique $u $ in  $W^{1, 2}_0(\Omega; \C^d)$ and a 
scalar function $p \in L^2_{\loc}(\Omega; \C)$ such that \eqref{eq-0} holds in the sense of distributions.
Moreover, the solution satisfies the energy estimate, 
\begin{equation}\label{e-est}
|\lambda| \| u \|_{L^2 (\Omega)} 
+  |\lambda|^{1/2} \| \nabla u \|_{L^2(\Omega)}
\le C \| F \|_{L^2(\Omega)},
\end{equation}
where $C$ depends on $d$ and $\theta$.
In this paper we shall be interested in the $L^q$ resolvent estimates for the Stokes operator,
\begin{equation}\label{p-est}
|\lambda| \| u \|_{L^q(\Omega)} 
+  |\lambda|^{1/2} \| \nabla u \|_{L^q(\Omega)} 
\le C \| F \|_{L^q(\Omega)}
\end{equation}
for $q\neq 2$.
Such estimates lead to the analyticity in the solenoidal $L^q$ spaces  of the Stokes semigroup, the solution operator to the nonstationary 
Stokes equations.
They play a crucial role in the study of the nonlinear Navier-Stokes equations in the domain $\Omega\times (0, T)$ and
have been investigated  extensively in the last forty years.

If $\Omega$ is a bounded or exterior domain with $C^{1, 1}$ boundary, the $L^q$ resolvent estimate \eqref{p-est},
together with estimates of $\nabla^2u$ and $\nabla p$ in $L^q(\Omega)$,
holds for $1< q< \infty$ \cite{Solo-1977, Giga-1981, Sohr-1987, Sohr-1994}.
See \cite{Sohr-1994} for a review as well as a comprehensive list of references in the case of smooth domains.
Also see \cite{Giga-1991, FKS-2005, Mitrea-2008, Tolksdorf-2018} and their references  for related work and applications to the nonlinear Navier-Stokes equations.
If $\Omega$ is a bounded Lipschitz domain, it was proved  in \cite{Shen-2012} by the second author of the present paper that
the resolvent estimate, 
\begin{equation}\label{est-Lip}
|\lambda| \| u \|_{L^q(\Omega)} \le C \| F \|_{L^q(\Omega)}
\end{equation}
 holds if $d\ge 3$ and
\begin{equation}\label{Lip-range}
\Big| \frac{1}{q}-\frac12 \Big|< \frac{1}{2d} +\e,
\end{equation}
where $\e>0$ depends on $\Omega$.
In particular, this shows  that the estimate \eqref{est-Lip} holds if  $(3/2)-\e< q< 3+\e$ and $\Omega$ is a bounded Lipschitz domain in $\R^3$.
Using an approach similar to that in \cite{Shen-2012}, F. Gabel and P. Tolksdorf  \cite{Tolksdorf-2022} were able to establish \eqref{est-Lip}
for $d=2$ and $(4/3)-\e< q< 4+\e$.
We mention that in \cite{Deuring-2001} P. Deuring constructed a bounded Lipschitz domain in $\R^3$ for which the estimate \eqref{est-Lip}
fails for large $q$.
Very recently, the present authors proved in \cite{GS-2023}  that if $\Omega$ is a bounded or exterior $C^1$ domain,
then the resolvent estimate \eqref{p-est} holds
for any $1< q< \infty$. 
In view of the work \cite{Deuring-2001}, the $C^1$ condition on the domain is more or less sharp for $d\ge 3$.
The approach used  in \cite{GS-2023} is based on a perturbation argument.
A key step is to establish the resolvent estimate  \eqref{p-est}  in the region above a Lipschitz graph,
\begin{equation}\label{H-1}
\H_\psi=
\big\{ (x^\prime, x_d) \in \R^d: \ x^\prime \in \R^{d-1} \text{ and } x_d > \psi (x^\prime) \big\}
\end{equation}
under the assumption that $\| \nabla^\prime \psi \|_\infty< c_0$ for some small $c_0>0$ depending on $d$, $\theta$ and $q$.

We now move on to the case $q=\infty$, which has been a long-standing open problem.
The closely related problem  regarding the analyticity of the Stokes semigroup  in spaces of bounded solenoidal functions
 was finally solved in \cite{AG-2013,AG-2014} by K. Abe and Y. Giga for smooth domains, using a blow-up argument.
 Previously, the result was only known for a half-space by explicit calculations \cite{Hieber-2001}.
 In \cite{AGH-2015}, using a more direct approach,
 the resolvent estimate \eqref{p-est} was established for $q=\infty$ and $\lambda \in \Sigma_\theta$  by K. Abe, Y. Giga, and M. Hieber.
 One of the key  steps in \cite{AG-2013, AG-2014, AGH-2015}  relies on a pressure estimate,
 \begin{equation}\label{pu-es}
\sup_{x\in \Omega}  \delta (x)  |\nabla p(x)|  \le C \| \nabla u \|_{L^\infty(\partial\Omega)},
 \end{equation}
 where $\delta(x) =\text{\rm dist}(x, \partial\Omega)$.
This estimate was proved  in \cite{AG-2013, AG-2014} for domains with $C^3$ boundaries.
It was noted in \cite{AGH-2015}  that the estimate holds in $C^{1, \alpha }$ domains  by using a result  obtained independently  in \cite{KLS-2013}
in the study of elliptic homogenization problems.
As a result, the resolvent estimates in \cite{AGH-2015} hold for domains with $C^2$ boundaries
(the other parts of the argument in \cite{AGH-2015} still require $C^2$).
Also see \cite{Hieber-2016, Hieber-2015} for related work on the Stokes semigroup in $L^\infty$ in smooth exterior domains.

In this paper we study the $L^\infty$ resolvent estimates for the Stokes operator in $C^1$ and Lipschitz domains.
Let 
\begin{equation}\label{L-sigma}
L^\infty_\sigma (\Omega)
=\left\{ F \in L^\infty(\Omega; \C^d):\
\text{\rm div} (F) =0 \text{ in } \Omega \text{ and } F \cdot n=0 \quad \text{ on } \partial\Omega \right\},
\end{equation}
where $n$ denotes the outward unit normal to $\partial\Omega$. 
The following  is  one of  the main results of the paper.

\begin{thm}\label{main-1}
Let $\Omega$ be a bounded $C^{1} $ domain   in $\Rd$, $d\ge 3$, or a bounded Lipschitz domain in $\R^2$.
Let  $\lambda \in \Sigma_\theta$, where $\theta \in (0, \pi/2)$.
Then for any $F\in L_\sigma^\infty(\Omega)$,  the solution of the Dirichlet problem \eqref{eq-0}   in 
$W_0^{1, 2}(\Omega; \Cd) \times L^2(\Omega; \C)$
satisfies the estimate,
\begin{equation}\label{est-0}
 |\lambda| \| u \|_{L^\infty (\Omega)}
\le C  \| F \|_{L^\infty (\Omega)}.
\end{equation}
Moreover, if $\Omega$ is a bounded $C^{1, \alpha}$ domain for some $\alpha>0$, then
\begin{equation}\label{est-1}
  |\lambda|^{1/2} \| \nabla u \|_{L^\infty(\Omega)}
  \le C \| F \|_{L^\infty(\Omega)}.
\end{equation}
The constants  $C$   in \eqref{est-0}-\eqref{est-1}  depend only on $d$, $\theta$ and $\Omega$.
\end{thm}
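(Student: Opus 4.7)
The plan is to follow the direct $L^\infty$ approach of Abe--Giga--Hieber \cite{AGH-2015}, but to replace their pointwise pressure gradient estimate \eqref{pu-es}, which requires $C^{1,\alpha}$ smoothness of $\partial\Omega$, with an $L^q$-averaged substitute that is valid only at scales $r \gtrsim \ell_\lambda := |\lambda|^{-1/2}$. Once such an averaged pressure bound is available, the estimate \eqref{est-0} will follow by combining it with the $L^q$ resolvent estimate \eqref{p-est} proved in \cite{GS-2023} for $C^1$ domains in dimensions $d \ge 3$ and in \cite{Shen-2012, Tolksdorf-2022} for Lipschitz domains in dimension $d = 2$, invoked at the natural scale $\ell_\lambda$.

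To bound $|u(x_0)|$ for a fixed $x_0 \in \Omega$, I would split into two regimes depending on the size of $\delta(x_0)/\ell_\lambda$. If $\delta(x_0) \ge 2\ell_\lambda$, an interior argument on $B(x_0,\ell_\lambda)$ suffices: decompose $u = v + w$, where $v$ is the whole-space resolvent applied to a cutoff of $F$ (bounded by the explicit Oseen resolvent kernel, giving $|\lambda|\|v\|_\infty \lesssim \|F\|_\infty$), and $w$ solves a homogeneous Stokes system on the ball and is controlled in $L^\infty$ from its $L^q$ norm by interior regularity. The difficult case is $\delta(x_0) < 2\ell_\lambda$, where the nonsmooth boundary enters at the natural scale of the problem.

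The heart of the proof, and what I expect to be the main obstacle, is to establish the pressure--velocity estimate indicated in the abstract. Concretely, for every boundary ball $B = B(Q,r)$ with $Q \in \partial\Omega$ and $r \ge c_0 \ell_\lambda$, one should prove an inequality of the schematic form
\begin{equation*}
\left( \frac{1}{|B \cap \Omega|} \int_{B \cap \Omega} |p - p_B|^q \right)^{1/q}
\le C \left( r^{-1}\|u\|_{L^\infty(2B \cap \Omega)} + r \|F\|_{L^\infty(2B \cap \Omega)} \right),
\end{equation*}
for a suitable constant $p_B$ and some $q > 1$ depending on the smoothness class of $\partial\Omega$. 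The restriction $r \ge c_0 \ell_\lambda$ is essential: on smaller scales the modulus of smoothness of $\partial\Omega$ is not quantified and the pressure cannot be tracked, but on scales above $\ell_\lambda$ the lower-order term $\lambda u$ in \eqref{eq-0} provides enough coercivity to close a Caccioppoli-type argument exchanging $\nabla p$ for $p$ on annuli, after applying the $L^q$ Stokes theory of \cite{Shen-2012, GS-2023, Tolksdorf-2022} locally at scale $r$.

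With this averaged pressure estimate in hand, the pointwise bound on $u$ in the boundary regime is extracted by inserting it into a local representation of $u$ on $B(x_0, C\ell_\lambda) \cap \Omega$ (for example via the Stokes Green's matrix, or via a Calder\'on--Zygmund/Poisson-type argument), controlling each resulting term by \eqref{p-est} at scale $\ell_\lambda$, and absorbing the $\|u\|_\infty$ contribution on the right-hand side by choosing $c_0$ sufficiently small. Finally, the gradient bound \eqref{est-1} in the $C^{1,\alpha}$ case is deduced from \eqref{est-0} by invoking interior and boundary $C^{1,\alpha}$ Schauder-type estimates for the Stokes system (as in \cite{KLS-2013, GS-2023}), again applied at scale $\ell_\lambda$.
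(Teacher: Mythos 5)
Your overall architecture matches the paper's: localize the $L^q$ resolvent theory of \cite{GS-2023} (resp.\ a $q>2$ theory for $d=2$) at the scale $|\lambda|^{-1/2}$, and replace the pointwise pressure bound \eqref{pu-es} of \cite{AGH-2015} by an $L^q$-averaged pressure--velocity estimate valid only on scales above $|\lambda|^{-1/2}$. The gap is in the one step you yourself flag as the heart of the matter: your proposed mechanism for proving the averaged pressure estimate --- ``a Caccioppoli-type argument exchanging $\nabla p$ for $p$ on annuli,'' with the term $\lambda u$ supplying coercivity --- does not close. Any such local argument goes through the equation $\nabla p = F + \Delta u - \lambda u$: testing $p-p_B$ against $\operatorname{div}(w)$ with a Bogovski\u{\i} field $w$ supported in $B=B(Q,r)$ produces, besides the harmless $\int \nabla u\cdot\nabla \bar w$ term, the contribution $\int_B \lambda u\cdot \bar w$, which yields
\begin{equation*}
\Bigl(\fint_{B\cap\Omega}|p-p_B|^q\Bigr)^{1/q}\lesssim \Bigl(\fint_{B\cap\Omega}|\nabla u|^q\Bigr)^{1/q}+r\|F\|_\infty+r|\lambda|\,\|u\|_\infty .
\end{equation*}
At the relevant scale $r\sim N|\lambda|^{-1/2}$ the last term is $N|\lambda|^{1/2}\|u\|_\infty$; inserted into the localization inequality \eqref{V-00}, where the pressure oscillation is multiplied by $N^{\frac{d}{q}-1}|\lambda|^{1/2}$, it produces $N^{d/q}|\lambda|\|u\|_\infty$ on the right-hand side, which \emph{grows} with $N$ and can never be absorbed. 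So the lower-order term $\lambda u$ is not a source of coercivity here; it is precisely the enemy.

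What is actually needed --- and what the paper does --- is to exploit two global structural facts that your sketch never uses: since $\operatorname{div}F=\operatorname{div}u=0$, the pressure is \emph{harmonic} in $\Omega$, and its Neumann data on $\partial\Omega$ is a purely \emph{tangential} derivative, $\partial p/\partial n=(n_i\partial_j-n_j\partial_i)\partial_j u^i$ plus a term $-\lambda u\cdot n$ that vanishes in the limit because $u=0$ on $\partial\Omega$. The averaged estimates \eqref{P00}--\eqref{P000} for this Neumann problem \eqref{NP-00} are then proved by a global duality argument on $\partial\Omega$ (integrating the tangential derivative by parts onto auxiliary Neumann solutions, together with decay estimates for the Neumann/Green functions and nontangential maximal function bounds in Lipschitz domains), bounding the oscillation of $p$ by $M_q(|\nabla u|)(r)$ with no $|\lambda|\|u\|_\infty$ term at all; only then does local regularity at scale $|\lambda|^{-1/2}$ convert $M_q(|\nabla u|)$ into $|\lambda|^{1/2}\|u\|_\infty+|\lambda|^{-1/2}\|F\|_\infty$, which is absorbable since $N^{\frac dq-1}\to0$. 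Two further points you gloss over: for $d=2$ one needs the \emph{generalized} resolvent estimate (with divergence data, for some $q>2$) in a graph domain with large Lipschitz constant, which \cite{Shen-2012,Tolksdorf-2022} do not supply and which the paper obtains by a separate interpolation/perturbation argument; and in $C^1$ (as opposed to $C^{1,\alpha}$) domains a Green's-matrix representation with usable pointwise bounds is not available, so the boundary regime must be handled through the localized generalized resolvent problem rather than a Poisson-type representation.
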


A few remarks are in order.

\begin{remark}
The smoothness conditions on $\Omega$ in Theorem \ref{main-1} are sharp.
In view of the counter-example given in \cite{Deuring-2001}, the $L^\infty$ resolvent estimate \eqref{est-0}  fails in a general Lipschitz domain for $d\ge 3$.
Also, the gradient estimate \eqref{est-1}  cannot be expected  in a $C^1$ domain, as there is no boundary Lipschitz estimate in $C^1$ domains
even for the Laplacian.
\end{remark}

\begin{remark}
It follows from Theorem \ref{main-1} by interpolation and duality that if $d=2$, the resolvent estimate \eqref{est-Lip}
holds in a Lipschitz domain for any $1< q< \infty$.
This extends the work \cite{Tolksdorf-2022}, which proved the estimate for $(4/3)-\e< q< 4+\e$ by 
a different method.
\end{remark}

\begin{remark}
If $\Omega$ is $C^1$ for $d\ge 3$ or Lipschitz for $d=2$,
our proof of Theorem \ref{main-1}  yields the estimate for the pressure,
\begin{equation}\label{pre-0}
|\lambda|^{1/2} \delta (x) |\nabla p(x)| \le C \| F \|_{L^\infty(\Omega)}
\end{equation}
for  any $x\in \Omega$ with $\delta (x) \ge |\lambda|^{-1/2}$.
Moreover, if $\Omega$ is $C^{1, \alpha}$ for some $\alpha>0$, then 
\begin{equation}
|\lambda|^{1/2} \| p \|_{B\!M\!O(\partial\Omega)}
\le C \| F \|_{L^\infty(\Omega)},
\end{equation}
 and \eqref{pre-0} holds for any $x\in \Omega$.
 In this case, since $p$ is harmonic in $\Omega$, by \cite{Fabes-1980},
  one may further improve \eqref{pre-0} to a Carleson measure estimate,
 \begin{equation}\label{Carleson}
 |\lambda| 
 \int_{B(x_0, r)\cap \Omega} \delta (x) |\nabla p(x)|^2\, dx
 \le C \| F \|^2_{L^\infty(\Omega)} r^{d-1}
 \end{equation}
 for any $x_0 \in \partial\Omega$ and $0< r< r_0$, where $r_0$ is given by \eqref{O}.
\end{remark}

Let $C_{0, \sigma}(\Omega)$ denote the closure of 
$$
C_{0, \sigma}^\infty (\Omega) =\left\{ F\in C_0^\infty(\Omega; \C^d):\  \text{\rm div}(F)=0 \text{ in } \Omega\right \}
$$
in $L^\infty(\Omega; \C^d)$.
If $\Omega$ is a bounded Lipschitz domain,
it is known that $$
C_{0, \sigma}(\Omega)=
\left\{ F\in C(\overline{\Omega}; \C^d): \ \text{\rm div}(F)=0 \text{ in } \Omega
\text{ and } F=0 \text{ on } \partial\Omega\right \}
$$
 \cite[Lemma 6.3]{AG-2013}.
For $F \in C_{0, \sigma}(\Omega)$ and $\lambda \in \Sigma_\theta$, define $u=R(\lambda)F$ to be the solution of \eqref{eq-0}.
By \cite[Theorem 1.1]{GS-2023},
  if $\Omega$ is  $C^1$, then $R(\lambda)F \in W^{1, p}_0 (\Omega; \C^d)$ for any $p>d$.
  Moreover, by the results in Section \ref{section-T}, 
  if $d=2$ and $\Omega$ is Lipschitz, we have $R(\lambda)F \in W^{1, p}_0(\Omega; \C^2)$ for some $p>2$.
  In both cases, we obtain  $R(\lambda)F\in C(\overline{\Omega}; \C^d)$ by Sobolev imbedding. As a result, 
it  follows from the estimate \eqref{est-0} that $\{ R(\lambda): \lambda \in \Sigma_\theta \}$ is a pseudo-resolvent in $C_{0, \sigma}(\Omega)$ and that 
there exists a closed operator $A$ such that $R(\lambda)=(\lambda -A)^{-1}$. 
As in \cite{AGH-2015},  we call $A$ the Stokes operator in $C_{0, \sigma}(\Omega)$.
The Stokes operator in $L^\infty_\sigma (\Omega)$ is defined in a similar manner.

\begin{cor}
Let $\Omega$ be a bounded $C^1$  domain in $\R^d$, $d\ge 3$, or a bounded Lipschitz domain in $\R^2$.
Then the Stokes operator $A$ generates a uniformly bounded analytic semigroup on $C_{0, \sigma}(\Omega)$
of angle $\pi/2$. It also generates a (non-$C_0$) uniformly bounded analytic semigroup on $L^\infty_\sigma (\Omega)$ of angle $\pi/2$.
\end{cor}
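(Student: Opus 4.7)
The corollary is a standard consequence of the $L^\infty$ resolvent estimate \eqref{est-0} via the generation theorem for pseudo-resolvents on possibly non-densely defined spaces (see, e.g., \cite{AGH-2015} for the analogous passage from resolvent estimate to semigroup in the smooth setting).

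The plan is first to define, for $\lambda \in \Sigma_\theta$, the solution operator $R(\lambda) F := u$ on $L_\sigma^\infty(\Omega)$, where $u \in W_0^{1,2}(\Omega;\Cd)$ is the unique Lax--Milgram solution of \eqref{eq-0} (well-defined since $L^\infty \subset L^2$ on a bounded domain). Membership $R(\lambda) F \in L^\infty_\sigma(\Omega)$ rests on three facts: $u \in L^\infty$ with $\|u\|_\infty \le C |\lambda|^{-1} \|F\|_\infty$ by \eqref{est-0}; $\text{div}(u) = 0$ by the equation; and $u \cdot n = 0$ on $\partial\Omega$ from the vanishing trace of $u \in W_0^{1,2}$. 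By $L^2$-uniqueness, $\{R(\lambda)\}_{\lambda \in \Sigma_\theta}$ satisfies the resolvent identity $R(\lambda) - R(\mu) = (\mu - \lambda) R(\lambda) R(\mu)$. Injectivity follows at once: if $R(\lambda) F = 0$ then $u = 0$ and \eqref{eq-0} collapses to $\nabla p = F$; since $\text{div}(F) = 0$ and $F \cdot n = 0$ on $\partial\Omega$, the pressure $p$ is harmonic with vanishing Neumann data, hence constant, so $F = 0$. Consequently $\{R(\lambda)\}$ is the resolvent family of a unique closed operator $-A$ on $L^\infty_\sigma(\Omega)$ with $D(A) = R(\lambda) (L^\infty_\sigma(\Omega))$, and the uniform bound $\|\lambda R(\lambda)\| \le C$ on every $\Sigma_\theta$, $\theta \in (0, \pi/2)$, together with the abstract generation theorem for sectorial operators without density of domain, yields a uniformly bounded analytic semigroup of angle $\pi/2$ on $L^\infty_\sigma(\Omega)$. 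This semigroup is not $C_0$ because $D(A)$ is contained in $\{u : u|_{\partial\Omega} = 0\}$ and is thus not sup-norm dense in $L^\infty_\sigma(\Omega)$.

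For the $C_{0,\sigma}(\Omega)$ statement I would verify two additional points: (i) $R(\lambda)$ preserves $C_{0,\sigma}(\Omega)$, and (ii) $D(A)$ is dense in $C_{0,\sigma}(\Omega)$. For (i), approximate $F \in C_{0,\sigma}(\Omega)$ by $F_k \in C^\infty_{0,\sigma}(\Omega)$ in $L^\infty$; the corresponding $u_k$ lie in $W^{2,q}(\Omega) \cap C_0(\overline{\Omega})$ for any $q < \infty$ via the $L^q$-resolvent estimate of \cite{GS-2023} and Sobolev embedding, hence in $C_{0,\sigma}(\Omega)$, while \eqref{est-0} gives $u_k \to R(\lambda) F$ uniformly, placing the limit in $C_{0,\sigma}(\Omega)$. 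For (ii), it suffices to show $\lambda R(\lambda) F \to F$ uniformly as $\lambda \to +\infty$ for $F \in C^\infty_{0,\sigma}(\Omega)$; through $-\Delta u + \nabla p = F - \lambda u$ this reduces to controlling $\Delta u$ and $\nabla p$ in sup norm in terms of $\|F\|_{C^2}$ and negative powers of $|\lambda|$, which one can extract from the $L^q$-resolvent estimate of \cite{GS-2023} at large $q$, Sobolev embedding, and the pressure bound \eqref{pre-0}.

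The main obstacle is step (ii): the resolvent estimate alone controls only $\|u\|_\infty$, whereas density of $D(A)$ requires a quantitative rate $\|\lambda R(\lambda) F - F\|_\infty \to 0$ that must be uniform all the way to $\partial\Omega$. The delicate point is the boundary layer, where $u$ is forced by $u|_{\partial\Omega} = 0$ to deviate from $\lambda^{-1} F$; handling this needs the $W^{2,q}$-resolvent estimate of \cite{GS-2023} (to bound $\Delta u$) combined with \eqref{est-0} and \eqref{pre-0} to dominate $\nabla p$ on a shrinking neighborhood of $\partial\Omega$. Once this uniform convergence is established on the dense subset $C^\infty_{0,\sigma}(\Omega)$, a standard $3\varepsilon$ argument using $\|\lambda R(\lambda)\|_{L^\infty_\sigma \to L^\infty_\sigma} \le C$ promotes it to all of $C_{0,\sigma}(\Omega)$, completing the $C_0$ part of the corollary.
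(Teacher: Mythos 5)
Your overall skeleton (build the resolvent family $R(\lambda)$ from the Lax--Milgram solution, verify the resolvent identity and injectivity, invoke the generation theorem for sectorial operators with non-dense domain, then check invariance of $C_{0,\sigma}(\Omega)$ and density of $D(A)$ there) is the standard route and matches what the paper intends. The $L^\infty_\sigma$ half and the injectivity argument are fine. But there are two concrete problems in the $C_{0,\sigma}$ half.

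First, you invoke estimates the paper is explicitly built to avoid. The $L^q$ resolvent estimates of \cite{GS-2023} in $C^1$ domains control $u$ and $\nabla u$ only; there is no $W^{2,q}$ resolvent estimate in a $C^1$ (let alone Lipschitz) domain, and the paper stresses that $\nabla^2 u$ and $\nabla p$ cannot be controlled near the boundary even in $C^{1,\alpha}$ domains, while \eqref{pre-0} holds only for $\delta(x)\ge |\lambda|^{-1/2}$. So the route you propose for density of $D(A)$ --- bounding $\Delta u$ and $\nabla p$ in sup norm on a boundary layer --- would genuinely fail in this setting; you correctly flag it as the main obstacle but do not resolve it, so the $C_0$ part of the corollary is not proved. (The appeal to $W^{2,q}$ in step (i) is a smaller slip: continuity of $u_k$ up to $\partial\Omega$ with zero boundary value already follows from $u_k\in W^{1,q}_0(\Omega;\C^d)$ for some $q>d$ and Sobolev embedding; what actually needs justification there is that a continuous solenoidal field vanishing on $\partial\Omega$ lies in the sup-norm closure of $C^\infty_{0,\sigma}(\Omega)$, which is a known but nontrivial approximation lemma.)

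Second, the density step has a soft resolution that bypasses all boundary-layer analysis. For $F\in C^\infty_{0,\sigma}(\Omega)$ set $G=\lambda F-\Delta F$. Since $F$ vanishes near $\partial\Omega$ and $\text{\rm div}(\Delta F)=0$, the pair $(F,\, p\equiv 0)$ solves \eqref{eq-0} with datum $G$, and $G\in C^\infty_{0,\sigma}(\Omega)\subset C_{0,\sigma}(\Omega)$; by $L^2$ uniqueness, $F=R(\lambda)G\in D(A)$. Hence $C^\infty_{0,\sigma}(\Omega)\subset D(A)$ and $D(A)$ is dense in $C_{0,\sigma}(\Omega)$ by definition of that space. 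The convergence $\lambda R(\lambda)F\to F$ then holds for $F\in D(A)$ by pure resolvent algebra: writing $F=R(\lambda_0)G$,
\begin{equation*}
\lambda R(\lambda)R(\lambda_0)G=\frac{\lambda}{\lambda_0-\lambda}\bigl(R(\lambda)-R(\lambda_0)\bigr)G\longrightarrow R(\lambda_0)G
\end{equation*}
as $\lambda\to+\infty$, using only $\|\lambda R(\lambda)\|\le C$; the uniform bound and density then extend this to all of $C_{0,\sigma}(\Omega)$, giving strong continuity of the semigroup at $t=0$. With this replacement your argument closes.
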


We now describe our approach to Theorem \ref{main-1}, which is based on the method used by K. Abe, Y. Giga,  and M. Hieber in  \cite{AGH-2015}
for $L^\infty$ resolvent estimates in smooth domains.
This method was inspired by the works of K. Masuda \cite{Masuda} and H.B. Stewart \cite{Stewart} for general elliptic operators
in the space of continuous functions.
The basic idea is to localize the $L^q$ resolvent estimates for some $q>d$ and use the estimate \eqref{pu-es}  to control the error terms involving the pressure.
The major issue with this method for a nonsmooth domain is the lack  of  $L^\infty$ estimates   for   $\nabla u$,
as both $\nabla u$ and $p$ are not expected to be bounded up to the boundary in a $C^1$ domain. 
Moreover, the main set-up  form for a priori estimate, 
$$
\aligned
& M_q(u, p)(\lambda)\\
& =|\lambda| |u(x)|
+ |\lambda|^{1/2} |\nabla u(x)| 
+ |\lambda|^{d/2q} \|\nabla^2 u \|_{L^q(\Omega \cap B(x, |\lambda|^{-1/2}))}
+ |\lambda|^{d/2q} \|\nabla p \|_{L^q(\Omega\cap B(x, |\lambda|^{-1/2}))},
\endaligned
$$
 used in \cite{AGH-2015},  involves $\nabla^2 u$ and $\nabla p$, which cannot be controlled even for $C^{1, \alpha}$ domains.
 
We manage to resolve  these issues by replacing \eqref{pu-es} with several  new and stronger  estimates that  relate the pressure to the gradient of the velocity
 in the $L^q$ average, but  only on scales above a given level. 
A key observation is that such estimates are only needed on scales above $|\lambda|^{-1/2}$ when it is used to bound  the $L^\infty$ norm  of $u$
by an interpolation  inequality. 
With the $L^q$ resolvent estimates we obtained  recently in \cite{GS-2023} for $1< q<\infty$, 
this allows us to treat the case of $C^1$ domains without the Lipschitz bound for the velocity.
In the case $d=2$, the approach, together with a perturbation argument, which upgrades the $L^2$ estimates for $\nabla u$ to $L^q$ estimates for some $q>2$,
yields the $L^\infty$ resolvent estimate for Lipschitz domains.
We mention that  our new estimates for the pressure, which are of independent interest,  are proved for Lipschitz domains.
By taking limits in the scale,  it follows that the estimate \eqref{pu-es}  used in \cite{AG-2013, AG-2014, AGH-2015} in fact holds for Lipschitz domains.
However, as we pointed out above, it is not strong enough  for treating nonsmooth domains.

In the following we give a sketch of the proof of Theorem \ref{main-1} and provide a more detailed 
description of  the new estimates mentioned above.

\medskip

Step 1. We localize the generalized $L^q$ resolvent estimates obtained in \cite{GS-2023}  for $1< q< \infty$ in a graph domain $\H_\psi$, given by \eqref{H-1},
where $\psi: \R^{d-1} \to \R$ is a Lipschitz function with small Lipschitz norm $\|\nabla^\prime \psi\|_\infty$.
This, together with local regularity estimates for the Stokes equations (with $\lambda=0$) in $C^1$ domains, allows us to show that for any $N\ge 2$
and $d< q< \infty$,
\begin{equation}\label{V-00}
\aligned
  |\lambda| \| u \|_{L^\infty(\Omega)}
 & \le C  N^{4d+8} \| F \|_{L^\infty(\Omega)}  
+ C N^{\frac{d}{q}-1}
 |\lambda| \| u \|_{L^\infty(\Omega)}\\
& + C N^{\frac{d}{q}-1}
|\lambda|^{\frac12}
\sup_{\substack{ t< r< r_0\\ B(x_0, 2r ) \subset \Omega}}
\inf_{\substack {\beta \in \C }}
\left(\fint_{B(x_0, r ) } | p -\beta|^q \right)^{1/q}\\
&+ C N^{\frac{d}{q}-1}
|\lambda|^{\frac12}
\sup_{\substack{ x_0\in \partial \Omega \\ t< r< r_0 } }
\inf_{\substack {\beta \in \C }}
\left\{
\left(\fint_{B(x_0, r )\cap \Omega } | p -\beta|^q \right)^{1/q}
+ \left(\fint_{B(x_0, r  )\cap \partial\Omega}
|p -\beta|^q \right)^{1/q} \right\},
\endaligned
\end{equation}
where $t=|\lambda|^{-1/2}$, $\Omega$ is $C^1$,  and $C$ depends only on $d$, $q$ and $\Omega$.
See Sections \ref{section-L} and \ref{section-V}.

\medskip

Step 2.
To bound the terms in \eqref{V-00} involving the pressure $p$, we study the Neumann problem for Laplace's equation,
\begin{equation}\label{NP-00}
\left\{
\aligned
\Delta \phi & =0 & \quad & \text{ in } \Omega,\\
\frac{\partial \phi}{\partial n}  & = (n_i \partial_j -n_j \partial_i ) g_{ij} & \quad & \text{ on } \partial \Omega,
\endaligned
\right.
\end{equation}
where $\Omega$ is a bounded Lipschitz domain and the repeated indices $i, j$ are summed from $1$ to $d$.
Let $g=(g_{ij})$ and  $I(x_0, r)= B(x_0, r) \cap \partial\Omega$.
For $0< r<   r_0$, we introduce a new  maximal function, 
\begin{equation}\label{M00}
M_q (g)  (r) =\sup_{\substack{x_0 \in \partial\Omega \\ r\le  s< r_0}}
\left(\fint_{I(x_0, s)} |g|^q \right)^{1/q},
\end{equation}
which measures the $L^q$ average  of $g$ above the scale $r$.
Let $2\le q<\infty$ and $0< r< r_0$.
We  show that  
\begin{equation}\label{P00}
\sup_{B(x, 2r)\subset \Omega}
\left(\fint_{B(x, r)} |\phi - \fint_{B(x, r)} \phi |^q \right)^{1/q}
\le C M_q (g) (r) ,
\end{equation}
and that  for $x_0\in \partial\Omega$,
\begin{equation}\label{P000}
\aligned
& \left(\fint_{B(x_0, r) \cap \partial\Omega}
|\phi -\fint_{B(x_0, r) \cap \partial\Omega} \phi |^q \right)^{1/q}\\
&\qquad+
\left(\fint_{B(x_0, r) \cap \Omega}
|\phi -\fint_{B(x_0, r) \cap\partial \Omega} \phi  |^q \right)^{1/q}
\le C  M_q (g) (r), 
\endaligned
\end{equation}
where $C$ depends only on $d$, $q$ and $\Omega$.
See Sections \ref{section-N} and \ref{section-Ne}.
Since $\phi$ is harmonic, it follows from \eqref{P00} that
\begin{equation}\label{Pa}
\delta(x)  |\nabla \phi (x)|
\le C M_q (g) (\delta (x))
\end{equation}
for any $x\in \Omega$, where $\delta(x) =\text{\rm dist}(x, \partial\Omega)$.
In particular, this implies that
\begin{equation}
\sup_{x\in \Omega} \delta(x)  |\nabla \phi (x)|
\le C \| g \|_{L^\infty(\partial\Omega)},
\end{equation}
which gives \eqref{pu-es} if $g_{ij} =\partial_j u_i$.
It is also interesting to point out that the estimate \eqref{P000} implies
\begin{equation}
\| \phi \|_{B\!M\!O(\partial\Omega)}
\le C \| g \|_{L^\infty(\partial\Omega)},
\end{equation}
which is sharp, as $\phi$ is related to $g$ by some singular integral operator on $\partial\Omega$.

\medskip

Step 3.
As in \cite{AG-2013, AG-2014, AGH-2015},  note that if $(u, p)$ is a smooth solution of \eqref{eq-0} with $F\in C_{0, \sigma}^\infty(\Omega)$,
then $\Delta p=0$ in $\Omega$  and
$$
\nabla p \cdot n = (n_i \partial_j  -n_j \partial_i ) \partial_j u_i \quad \text{ on } \partial\Omega.
$$
As a result, we may use  estimates \eqref{P00}-\eqref{P000} to bound the last two terms in the right-hand side of \eqref{V-00}
and obtain 
\begin{equation}\label{est-a}
|\lambda| \| u \|_{L^\infty(\Omega)}
\le C N^{4d+8} \| F \|_{L^\infty(\Omega)}
+ CN^{\frac{d}{q}-1} |\lambda| \| u \|_{L^\infty(\Omega)}
+ C N^{\frac{d}{q}-1} |\lambda|^{1/2}
M_q (|\nabla u |) (t),
\end{equation}
where $t=|\lambda|^{-1/2}$.
By using local regularity estimates for the Stokes equations in $C^1$ domains, we deduce that for $t=|\lambda|^{-1/2}$,
\begin{equation}\label{est-b}
M_q (|\nabla u|) (t ) \le C \left\{ |\lambda|^{1/2} \| u \|_{L^\infty(\Omega)}
+|\lambda|^{-1/2} \| F \|_{L^\infty(\Omega)} \right\},
\end{equation}
where $C$ depends on $d$, $q$ and $\Omega$. See Lemma \ref{lemma-C-2}.
By combining \eqref{est-a} with \eqref{est-b}, we see that 
\begin{equation}\label{est-c}
|\lambda| \| u \|_{L^\infty(\Omega)}
\le C_0 N^{4d+8} \| F \|_{L^\infty(\Omega)}
+ C_0N^{\frac{d}{q}-1} |\lambda| \| u \|_{L^\infty(\Omega)}.
\end{equation}
To finish the proof of \eqref{est-0}, we fix $q>d$ and choose $N$ so large that $C_0 N^{\frac{d}{q}-1} \le (1/2)$.

\medskip

Step 4. If $\Omega$ is $C^{1, \alpha}$ for some $\alpha>0$,
the local estimates for the Stokes equations give
\begin{equation}\label{est-g}
|\lambda|^{1/2} \|\nabla u \|_{L^\infty(\Omega)}
\le C \left\{
|\lambda| \|  u \|_{L^\infty(\Omega)}
+  \| F \|_{L^\infty(\Omega)} \right\}
\end{equation}
for $|\lambda|> r_0^{-2}$.
See Remark \ref{re-smooth}.
The gradient estimate \eqref{est-1} follows from \eqref{est-0} and \eqref{est-g}.

\medskip

Step 5. A careful inspection of the argument outlined above indicates that the $L^\infty$ resolvent estimate \eqref{est-0} holds in $\Omega$,
as long as (1)  one is able to establish the generalized $L^q$ resolvent estimates for $u$ and $\nabla u$ for some $q>d$;
(2) the local boundary $L^q$ estimate holds for the Stokes equations (with $\lambda=0$)
 in $\Omega$ for some $q>d$, which is needed for bounding 
$M_q(|\nabla u|)(|\lambda|^{-1/2})$.
These are indeed possible for Lipschitz domains in $\R^2$ by using a perturbation argument to upgrade the $L^2$ estimates for $\nabla u$.
See Section \ref{section-T}.

\medskip

The approach we use for  bounded domains  works equally well for graph domains.
It also works well for exterior domains,  provided that  $|\lambda|$ is sufficiently large. 
We call $\Omega$ an exterior Lipschitz (or $C^1$, resp.)
domain if  $\Omega$ is open, connected and $\overline{\Omega}^c$ is a nonempty bounded  set
with Lipschitz (or $C^1$, resp.) boundary.

\begin{thm}\label{main-2}
Let $\Omega$ be an exterior $C^1$ domain in $\R^d$ for $d\ge 3$ or an exterior Lipschitz domain in $\R^2$.
Let $\lambda\in \Sigma_\theta$, where  $\theta \in (0, \pi/2)$.
There exists $\lambda_0>1$, depending only on $d$, $\theta$ and $\Omega$, with the property that
for any $\lambda\in \Sigma_\theta$ with $|\lambda|> \lambda_0$ and for any $F\in L^\infty_\sigma (\Omega)$,
there exist a unique $u\in W^{1, 2}_{\loc} (\Omega;  \C^d) \cap L^\infty(\Omega; \C^d)$
and $ p\in L^2_{\loc} (\Omega; \C)$ such that \eqref{eq-0} holds in
the sense of distributions and  that $p(x)=\beta + o(1)$ as $|x|\to \infty$ for some $\beta\in \C$.
Moreover, the solution satisfies the estimate \eqref{est-0}.
Furthermore,  if $\Omega$ is an exterior $C^{1, \alpha}$  domain for some $\alpha>0$, then the estimate \eqref{est-1}
holds. 
\end{thm}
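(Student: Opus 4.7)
The plan is to adapt the four-step proof of Theorem~\ref{main-1} almost verbatim, with the exterior geometry entering only through the scale $|\lambda|^{-1/2}$ relative to the size of $\partial\Omega$. Each ingredient used in Steps~1--3, namely the local $L^q$ resolvent estimates from \cite{GS-2023} (valid in exterior $C^1$ domains for $d \ge 3$, and, by the $d=2$ perturbation argument of Section~\ref{section-T} applied exteriorly, in exterior Lipschitz domains in $\R^2$), the Neumann pressure bounds \eqref{P00}--\eqref{P000}, and the boundary regularity estimate \eqref{est-b}, is inherently local: every ball appearing in the argument has radius at most a fixed $r_0$ determined by the boundary smoothness, and since $\partial\Omega$ is compact the same $r_0$ works in the exterior case. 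The only new restriction is $|\lambda|^{-1/2} < r_0$, i.e., $|\lambda| > r_0^{-2} =: \lambda_0$, so that localization at the scale $t = |\lambda|^{-1/2}$ makes sense and the choice of $N$ with $C_0 N^{d/q-1} \le 1/2$ closes \eqref{est-c}.

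For Step~2, the estimates \eqref{P00}--\eqref{P000} concern a function $\phi$ harmonic in $\Omega$ with Neumann data $(n_i\partial_j - n_j\partial_i) g_{ij}$ on $\partial\Omega$. In the exterior setting we apply them to $\phi$ normalized so that $\phi \to 0$ at infinity; since every ball in \eqref{P00}--\eqref{P000} is contained in a fixed large ball $B(0, 2R_0)$ with $\partial\Omega \subset B(0, R_0)$, the bounded-domain proof transfers with the same constants once one subtracts the value at infinity. When applied in Step~3 to $\phi = p$ we must arrange that the pressure decays to a constant, which will be ensured during the existence construction.

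To establish existence, we approximate $F \in L_\sigma^\infty(\Omega)$ by a sequence $F_m \in C_{0,\sigma}^\infty(\Omega)$ with $\|F_m\|_\infty \le C\|F\|_\infty$ and $F_m \to F$ in $L^q_{\loc}$ and weak-$\ast$ in $L^\infty$ (truncation to $\Omega \cap B(0,m)$ followed by a Bogovski\u\i\ correction to restore the divergence-free condition). For each $F_m$ the $L^q$ resolvent theory of \cite{GS-2023} with $|\lambda| > \lambda_0$ produces a unique $(u_m, p_m) \in W_0^{1,2}(\Omega;\Cd) \times L^2_{\loc}(\Omega;\C)$ with $u_m \in L^q$ and $p_m$ normalized so that $p_m \to 0$ at infinity. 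Steps~1--3 applied to $(u_m, p_m)$ give $|\lambda| \|u_m\|_\infty \le C\|F\|_\infty$ uniformly in $m$; extracting a weak-$\ast$ limit in $L^\infty$ together with strong $W^{1,2}_{\loc}$ convergence, and recovering the pressure via the equation, yields the desired $(u, p)$. Uniqueness follows by a standard cut-off argument on the difference of two solutions, using the $L^q$ resolvent estimate at some finite $q$ on the cutoff region plus the decay of the pressure to a constant at infinity.

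The main obstacle is the pressure at infinity when $d = 2$, where harmonic functions in an exterior Lipschitz domain may carry a logarithmic mode $a \log|x|$. To rule this out we use the flux identity: for $R$ large, integrating the equation over $\Omega \cap B(0,R)$ and using $\operatorname{div} u = \operatorname{div} F = 0$, we find $\int_{\partial B(0,R)} \partial_n p = 0$, which forces the logarithmic coefficient to vanish and gives $p(x) = \beta + o(1)$ for some $\beta \in \C$. Once $p$ is anchored this way, the estimates of Step~2 apply to $p$ itself and the main inequality \eqref{est-0} is obtained. Finally, the gradient estimate \eqref{est-1} in $C^{1,\alpha}$ exterior domains follows from the interior and boundary Schauder theory for the Stokes system combined with \eqref{est-0} and the pressure bound \eqref{pre-0}, exactly as in the bounded case.
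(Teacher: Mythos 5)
Your overall architecture is the same as the paper's: localize the velocity estimate of Section \ref{section-V} (which, as noted in Remark \ref{re-V}, survives in the exterior setting once $|\lambda|$ exceeds a threshold of the form $Cr_0^{-2}N^4$ -- note that $\lambda_0$ must therefore depend on the final choice of $N$, not just on $r_0$), feed in the exterior pressure estimates, absorb, and then approximate for general $F\in L^\infty_\sigma(\Omega)$. Two steps, however, are passed over too quickly and contain genuine gaps.

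First, your justification for transferring the pressure estimates \eqref{P00}--\eqref{P000} to the exterior case -- that ``every ball appearing in the argument is contained in a fixed large ball $B(0,2R_0)$'' -- is false for the interior estimate. The supremum in \eqref{P00} (and in the third line of \eqref{V-00}) runs over all $B(x,2r)\subset\Omega$ with $r<r_0$, and the center $x$ may be arbitrarily far from $\partial\Omega$. This is exactly why the paper devotes Section \ref{section-Ne} to the exterior Neumann problem: the interior bound there (Theorem \ref{thm-N2}) acquires an extra term $\|g\|_{L^q(\partial\Omega)}$, and its proof for $\delta(x)\gtrsim r_0$ requires the maximum principle applied to $\nabla\phi$ together with the decay $|\nabla\phi(x)|=O(|x|^{1-d})$ (or $O(|x|^{-2})$ for $d=2$) coming from the Green representation at infinity. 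The extra term is then absorbed using the boundary regularity estimate \eqref{est-b}, so the final inequality still closes, but this far-field analysis is not a ``same constants'' transfer and must be supplied.

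Second, your treatment of the pressure at infinity is circular as written. You propose to normalize $p_m\to 0$ at infinity and to rule out the logarithmic mode in $d=2$ by the flux identity $\int_{\partial B(0,R)}\partial_n p=0$. But the flux identity only shows that the spherical averages of $p$ are constant; without an a priori growth bound ($p=o(\log|x|)$, or $|\nabla p|=O(|x|^{-1})$) it does not exclude nonconstant bounded or growing harmonic behavior, so it does not by itself yield $p=\beta+o(1)$, and the normalization $p_m\to 0$ is not yet licensed. The paper closes this loop in Lemma \ref{lemma-ed1}: for $F\in C^\infty_{0,\sigma}$, the $L^q$ resolvent theory plus a cut-off and the whole-space estimates give $\nabla p\in L^q$ near infinity, whence the mean value property for the harmonic function $\nabla p$ yields $|\nabla p(x)|=O(|x|^{-d+\e})$ and then $p\to\beta$; only after this does the flux-identity argument (Lemma \ref{lemma-ed2}) take over for the limit of the approximating sequence. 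You need this a priori decay step, and also a quantitative bound such as $\delta(x)|\nabla p_\ell(x)|\le C|\lambda|^{-1/2}\|F\|_{L^\infty(\Omega)}$ (which the paper extracts from Theorem \ref{thm-P1e} and \eqref{main-B1}) to pass the condition $p=\beta+o(1)$ to the limit for general $F\in L^\infty_\sigma(\Omega)$.
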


\begin{cor}\label{cor-ext}
Let $\Omega$ be an exterior  $C^1$  domain in $\R^d$, $d\ge 3$ or an exterior Lipschitz domain in $\R^2$.
Then the Stokes operator $A$ generates an analytic semigroup on $C_{0, \sigma}(\Omega)$ and $L^\infty_\sigma (\Omega)$.
\end{cor}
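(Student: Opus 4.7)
The plan is to deduce the corollary from the resolvent estimate of Theorem \ref{main-2} via the standard Dunford integral construction of analytic semigroups, paralleling the treatment in the bounded case.

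By Theorem \ref{main-2}, for any $\theta \in (0, \pi/2)$ there exists $\lambda_0 > 1$ such that the set $\{\mu \in \Sigma_\theta : |\mu| > \lambda_0\}$ lies in the resolvent set of $-A$ on $L^\infty_\sigma(\Omega)$ with the uniform bound $|\mu|\,\|(\mu I + A)^{-1}\|_{\mathcal{L}(L^\infty_\sigma)} \le C$. The Dunford integral
\begin{equation*}
T(t) := \frac{1}{2\pi i} \int_\Gamma e^{\mu t}(\mu I + A)^{-1}\, d\mu,
\end{equation*}
taken over a contour $\Gamma$ that encircles $\{|\mu| \le \lambda_0\}$ and runs along rays $\arg \mu = \pm (\pi - \theta - \varepsilon)$ at infinity, then defines an analytic family of bounded operators on $L^\infty_\sigma(\Omega)$ for $|\arg t| < \pi/2 - \theta$ with $\|T(t)\|_{\mathcal{L}(L^\infty_\sigma)} \le M e^{\lambda_0 |t|}$. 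Standard contour deformation gives the semigroup law, so $-A$ generates an analytic semigroup of angle $\pi/2$ on $L^\infty_\sigma(\Omega)$, which (as in the bounded case) is not strongly continuous at $t = 0$ on the whole space.

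For the assertion on the closed subspace $C_{0, \sigma}(\Omega)$, it suffices to verify that the resolvent preserves this subspace; strong continuity at $t = 0$ then follows from the identity $T(t)F - F = -\int_0^t T(s) AF\, ds$ on $D(A) \cap C_{0, \sigma}(\Omega)$, density, and uniform boundedness on compact time intervals. Given $F \in C_{0, \sigma}^\infty(\Omega)$ with compact support $K$, the solution $u$ from Theorem \ref{main-2} lies in $W^{1, 2}_{\loc}(\Omega) \cap L^\infty(\Omega)$ with $u = 0$ on $\partial\Omega$. Local interior and boundary H\"older regularity for the Stokes system in $C^1$ (respectively Lipschitz for $d = 2$) domains, together with the vanishing boundary data, give continuity of $u$ on $\overline{\Omega}$. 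Outside $K$ the equation reduces to $(-\Delta + \lambda) u = -\nabla p$, where $p(x) \to \beta$ at infinity by Theorem \ref{main-2}; using the fundamental solution of $-\Delta + \lambda$ in $\Rd$, which decays exponentially at rate proportional to $|\lambda|^{1/2}$ for $\lambda \in \Sigma_\theta$, combined with the pointwise control of $\nabla p$ given by \eqref{Pa} applied to the harmonic $p$, yields $u(x) \to 0$ as $|x| \to \infty$. Hence $u \in C_{0, \sigma}(\Omega)$.

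The main obstacle is precisely this decay verification at infinity. Unlike in the bounded case, compactness is unavailable, and the pressure decays only to a constant at a rate not directly provided by Theorem \ref{main-2}, so one must quantitatively exploit the exponential decay of the Helmholtz-type kernel to absorb the slowly decaying pressure term. Once invariance of $C_{0, \sigma}(\Omega)$ under the resolvent is secured, the $C_0$ analytic semigroup properties follow from the abstract framework.
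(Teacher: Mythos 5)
The paper states this corollary without proof, treating it as an immediate consequence of Theorem \ref{main-2} via the standard Dunford-integral construction of analytic semigroups; your proposal follows exactly that route, so there is no competing argument in the text to compare against. Your overall structure — contour integral bypassing the disk $|\mu|\le\lambda_0$, resolvent invariance of the closed subspace $C_{0,\sigma}(\Omega)$, and a density argument for strong continuity at $t=0$ — is the correct and intended one.

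Two points need repair. First, the estimate \eqref{Pa} you invoke is stated and proved for \emph{bounded} Lipschitz domains; in the exterior setting the relevant gradient control for the harmonic pressure is furnished by Lemma \ref{lemma-Ne0} (giving $|\nabla p(x)|=O(|x|^{1-d})$ for $d\ge 3$ and $O(|x|^{-2})$ for $d=2$), Theorem \ref{thm-N2}, and especially Lemma \ref{lemma-ed1}, which establishes $|\nabla p(x)|=O(|x|^{-d+\varepsilon})$ and $p(x)\to\beta$ directly for $F\in C_{0,\sigma}^\infty(\Omega)$. Second, the decay of $u$ at infinity can be obtained more cleanly than by the cutoff-and-convolution argument with the Helmholtz kernel: for $F\in C_{0,\sigma}^\infty(\Omega)$ the $L^q$ resolvent estimates of \cite{GS-2023} (valid for exterior $C^1$ domains, $1<q<\infty$; and, via Section \ref{section-T}, for some $q>2$ in the exterior Lipschitz case when $d=2$) give $u\in W^{1,q}_0(\Omega;\C^d)$ for some $q>d$, whence $u$ is uniformly H\"older continuous and lies in $L^q(\Omega)$, which forces $u(x)\to 0$ as $|x|\to\infty$. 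This also settles $u\in C(\overline\Omega)$ and $u|_{\partial\Omega}=0$ at one stroke, which you derived separately from local regularity. Finally, the density of $D(A)\cap C_{0,\sigma}(\Omega)$ in $C_{0,\sigma}(\Omega)$, which you invoke but do not verify, should be addressed, e.g., by showing $\mu(\mu+A)^{-1}F\to F$ in $L^\infty$ as $\mu\to\infty$ along the positive real axis for $F\in C_{0,\sigma}^\infty(\Omega)$, using the same decay and local regularity ingredients.
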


Since the  $L^\infty$ resolvent estimates are only proved for $|\lambda|> \lambda_0$ in Theorem \ref{main-2},
it is not known that  the Stokes semigroup in Corollary \ref{cor-ext}
is uniformly bounded.
In the case of smooth exterior domains, the results on the boundedness can be found in \cite{Hieber-2015, Hieber-2016}.

We end this section by recalling the definition of a bounded Lipschitz (or $C^1$, resp.) domain.
Let  $\Omega$ be a bounded domain in $\R^d$.
We say $\Omega$ is Lipschitz (or $C^1$, resp.) if there exists $r_0>0$ such that for any $x_0 \in \partial\Omega$,
\begin{equation}\label{O}
 \Omega\cap B(x_0, r_0)
=D \cap B(x_0, r_0) \quad \text{ and } \quad
\partial \Omega \cap B(x_0, r_0) =\partial D \cap B(x_0, r_0),
\end{equation}
where $D=x_0+ O \H_\psi$ for some $d\times d$ orthogonal matrix $O$ and some Lipschitz  (or $C^1$, resp.)
function $\psi$ in $\R^{d-1}$
with $\psi  (0)=0$.

%\noindent{\bf Acknowledgement.}
% The authors thank the anonymous referees for their helpful comments that improved the quality of the manuscript.

%%%%%%%%%%%%%%%%%%%%%%%%%%%%%%%%%%%%%%%%%%%%%%%%%%%%%%%%

\section{Localization}\label{section-L}

Let $1< q< \infty$. 
In this section we carry out a localization procedure, using $L^q$ resolvent estimates  in a region above a Lipschitz graph with small Lipschitz norm, 
obtained in \cite{GS-2023}.

Let 
\begin{equation}\label{H}
\H_\psi= \left\{ (x^\prime, x_d) \in \Rd: \  x^\prime\in \mathbb{R}^{d-1} \text{ and } x_d > \psi (x^\prime) \right\}, 
\end{equation}
where $\psi: \mathbb{R}^{d-1} \to \mathbb{R}$ is a Lipschitz function and $\psi (0)=0$.
Consider   the generalized  resolvent problem for the Stokes equations,
\begin{equation}\label{G-eq}
\left\{
\aligned
-\Delta u +\nabla p+\lambda u  & =F +\text{\rm div} (f) & \quad & \text{ in } \H_\psi,\\
\text{\rm div}(u) & = g & \quad & \text{ in } \H_\psi,\\
u & =0 & \quad  & \text{ on } \partial \H_\psi,
\endaligned
\right.
\end{equation}
 where $\lambda\in \Sigma_\theta$.
 For $1< q< \infty$ and $q^\prime =\frac{q}{q-1}$, define
\begin{equation}\label{AB}
\aligned
A_\psi^q =L^q(\H_\psi; \C)  + \hW^{1, q}(\H_\psi; \C) \quad
\text{ and } \quad
B_\psi^q = L^q(\H_\psi; \C) \cap \hW^{-1, q}(\H_\psi; \C),
\endaligned
\end{equation}
where
$$
\hW^{1, q}(\H_\psi; \C)=\left\{  u\in L^q_{\loc}({\H_\psi}; \C): \ \nabla u \in L^q(\H_\psi; \Cd ) \right\},
$$
with the norm $\|\nabla u \|_{L^q(\H_\psi)}$, 
and $\hW^{-1, q}(\H_\psi; \C)$ denotes the dual of $\hW^{1, q^\prime} (\H_\psi;  \C)$.
Observe  that $A_\psi^q$ and $B_\psi^q=\left( A_\psi^{q^\prime} \right)^\prime$ are Banach spaces with the usual norms,
$$
\| p \|_{A_\psi^q}
=\inf \left\{ \| p_1 \|_{L^q(\H_\psi)} + \| \nabla p_2 \|_{L^q(\H_\psi)}: \ \  p=p_1 + p_2  \text{ in } \H_\psi \right\}
$$
and
$$
\| g \|_{B_\psi^q}
= \| g \|_{L^q(\H_\psi)} + \| g \|_{\hW^{-1, q} (\H_\psi)}.
$$

Let $\nabla^\prime\psi$ denote the gradient of $\psi$ in $\R^{d-1}$.
 The following theorem was proved  by the present authors  in \cite[Theorem 4.1]{GS-2023}.

\begin{thm}\label{thm-G}
Let $ \lambda\in \Sigma_\theta$ and $1< q<\infty$.
There exists $c_0\in (0, 1)$,  depending only on $d$, $q$ and $\theta$, such that 
if  $\|\nabla^\prime \psi \|_\infty\le  c_0$, then 
for any $F\in L^q(\H_\psi; \Cd)$, $f\in L^q(\H_\psi; \Cdd)$ and $g \in B_\psi^q$,
there exists  a unique  $(u, p)$ such that $u \in W_0^{1, q}(\H_\psi; \Cd)$, 
$p\in A_\psi^q $, and \eqref{G-eq} holds.
Moreover, the solution satisfies 
\begin{equation}\label{G-est}
\aligned
 & |\lambda|^{1/2} \| \nabla u \|_{L^q(\H_\psi)}
+|\lambda| \| u \|_{L^q(\H_\psi)}
\\
&\quad \le C \left\{
\| F \|_{L^q(\H_\psi)}
+ |\lambda|^{1/2}  \| f \|_{L^q(\H_\psi)}
+|\lambda|^{1/2}  \| g \|_{L^q(\H_\psi)}
+  |\lambda| \| g \|_{\hW^{-1, q}(\H_\psi)}
\right\},
\endaligned
\end{equation}
where $C$ depends only on $d$, $q$ and $\theta$.
\end{thm}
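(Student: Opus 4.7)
The plan is to reduce the problem to the half-space $\Rdp$ via a flattening change of variables and then treat the graph case as a small perturbation of the half-space system. Define $\Phi(y^\prime, y_d) = (y^\prime, y_d + \psi(y^\prime))$, which maps $\Rdp$ diffeomorphically onto $\H_\psi$. Setting $\wu(y) = u(\Phi(y))$ and $\wp(y) = p(\Phi(y))$, the system \eqref{G-eq} transforms into a resolvent Stokes system on $\Rdp$ of the form
\begin{equation*}
-\Delta \wu + \nabla \wp + \lambda \wu = \wF + \operatorname{div}(\wf) + \mathcal{R}_1(\wu, \wp), \qquad \operatorname{div}(\wu) = \wg + \mathcal{R}_2(\wu),
\end{equation*}
with $\wu = 0$ on $\partial \Rdp$, where $\mathcal{R}_1, \mathcal{R}_2$ are first-order linear operators whose coefficients are polynomial in $\nabla^\prime \psi$ and vanish when $\nabla^\prime \psi \equiv 0$. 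In particular, their operator norms between the natural resolvent spaces are $O(\|\nabla^\prime \psi\|_\infty)$.

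First, I would establish Theorem \ref{thm-G} in the unperturbed case $\psi \equiv 0$. The resolvent Stokes system on $\Rdp$ can be solved explicitly by a tangential Fourier transform in $x^\prime$ combined with Poisson/Green kernels in $x_d$: splitting $u = v + w$ so that $\operatorname{div}(w) = g$ with good bounds, the pressure $p$ is determined by a Neumann problem for $\Delta p = -\lambda g + \operatorname{div}(F + \operatorname{div}(f) - \lambda w)$ with boundary data supplied by the zero-trace constraint, while $v$ solves a Helmholtz system $(-\Delta + \lambda) v = \cdots$ with zero Dirichlet data. Calder\'on--Zygmund and Mikhlin multiplier estimates applied to the resulting kernels, uniformly in $\lambda \in \Sigma_\theta$, then yield \eqref{G-est} with pressure in $A_0^q$ and divergence data measured in $B_0^q$. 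This is in the spirit of the classical half-space $L^q$ theory of Solonnikov, Giga and Borchers--Miyakawa, adapted here to the nonstandard function spaces $A_0^q$ and $B_0^q$.

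Second, denote by $\mathcal{S}_0$ the bounded solution operator from the half-space estimate. I would rewrite the flattened system as the fixed-point identity
\begin{equation*}
(\wu, \wp) = \mathcal{S}_0 \bigl( \wF + \operatorname{div}(\wf) + \mathcal{R}_1(\wu, \wp),\; \wg + \mathcal{R}_2(\wu) \bigr),
\end{equation*}
and combine the half-space bound with the estimates on $\mathcal{R}_1, \mathcal{R}_2$ to obtain
\begin{equation*}
\bigl\| \mathcal{S}_0\bigl( \mathcal{R}_1(v, \pi), \mathcal{R}_2(v) \bigr) \bigr\|_\star \le C \, \|\nabla^\prime \psi\|_\infty \, \|(v, \pi)\|_\star,
\end{equation*}
where $\|(v, \pi)\|_\star := |\lambda|\,\|v\|_{L^q(\Rdp)} + |\lambda|^{1/2}\|\nabla v\|_{L^q(\Rdp)} + \|\pi\|_{A_0^q}$. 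Choosing $c_0$ so that $C c_0 \le 1/2$ gives existence, uniqueness, and \eqref{G-est} in the flattened system by a Neumann series / contraction argument; pulling back through $\Phi$ preserves all the norms up to constants depending only on $c_0$.

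The step I expect to be the main obstacle is the precise handling of the nonstandard data space $B_\psi^q = L^q \cap \hW^{-1,q}$ and pressure space $A_\psi^q = L^q + \hW^{1,q}$, together with their $|\lambda|^{1/2}$-scaling in \eqref{G-est}. Constructing a resolvent-compatible lift $w$ with $\operatorname{div}(w) = g$ and
\begin{equation*}
|\lambda|\,\|w\|_{L^q(\Rdp)} + |\lambda|^{1/2}\|\nabla w\|_{L^q(\Rdp)} \le C \bigl( |\lambda|^{1/2} \|g\|_{L^q(\Rdp)} + |\lambda|\, \|g\|_{\hW^{-1, q}(\Rdp)} \bigr)
\end{equation*}
requires splitting $g$ into an $L^q$ part and the divergence of an $\hW^{1,q}$ part, absorbing a gradient term into the pressure, and applying a scale-invariant Bogovskii-type operator on $\Rdp$. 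Once this compatibility between the data norms and the resolvent scaling is secured, the perturbation argument runs uniformly in $\lambda \in \Sigma_\theta$, and the required constants depend only on $d$, $q$ and $\theta$.
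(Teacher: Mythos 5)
Your proposal is sound and follows essentially the same route as the proof this theorem actually rests on: the paper does not prove Theorem \ref{thm-G} here but cites \cite[Theorem 4.1]{GS-2023}, whose argument is precisely a perturbation of the half-space generalized resolvent problem, i.e.\ invertibility of the operator $(u,p)\mapsto(-\Delta u+\nabla p+\lambda u,\ \mathrm{div}(u))$ between $W^{1,q}_0\times A^q_\psi$ and $W^{-1,q}\times B^q_\psi$ obtained by a Neumann series when $\|\nabla^\prime\psi\|_\infty$ is small. You have also correctly identified the genuinely delicate points (the $|\lambda|$-scaling of the $L^q\cap\hW^{-1,q}$ divergence datum and the placement of the pressure error terms into the $F$ and $\mathrm{div}(f)$ slots according to the splitting $A^q=L^q+\hW^{1,q}$), so no gap to report.
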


For $r>0$, let
\begin{equation}
D_r= B(0, r) \cap \H_\psi \quad \text{ and } \quad
I_r = B(0, r) \cap \partial \H_\psi.
\end{equation}

\begin{thm}\label{thm-L}
Let $ \lambda\in \Sigma_\theta$ and $2< q<\infty$.
Suppose $\|\nabla^\prime \psi \|_\infty\le c_0$, where $c_0=c_0(d, q, \theta)$ is given by Theorem \ref{thm-G}.
Let $(u, p)\in W^{1, q} (D_{Nr}; \C^d) \times L^q(D_{Nr}; \C)$ be a weak solution of
\begin{equation}\label{L-11}
\left\{
\aligned
-\Delta u +\nabla p +\lambda u & = F &\quad & \text{ in } D_{Nr},\\
\text{\rm div} (u) & =0 & \quad & \text{ in } D_{Nr},\\
u& =0 & \quad & \text{ on } I_{Nr},
\endaligned
\right.
\end{equation}
where $N\ge 2$ and $F\in L^\infty(D_{Nr}; \C^d)$. 
Then 
\begin{equation}\label{L-12}
\aligned
&  |\lambda| \| u \|_{L^q(D_r)} 
+|\lambda|^{\frac12} \| \nabla u \|_{L^q(D_r)}\\
  & \le 
C \Bigg\{
\| F \|_{L^\infty(D_{Nr})}  (Nr)^{\frac{d}{q}}
+ \left(\fint_{D_{Nr}} |u|^q\right)^{1/q}
\left(
(Nr)^{\frac{d}{q}-2} + (Nr)^{\frac{d}{q}-1} |\lambda|^{\frac12} \right)\\
 & \quad + \left(\fint_{D_{Nr}} ( |\nabla u| + |p|)^q \right)^{1/q} (Nr)^{\frac{d}{q}-1}
+\left(\fint_{I_{Nr}} ( |\nabla u| + |p|)^q \right)^{1/q}
(Nr)^{\frac{d}{q}-1}
\Bigg\},
\endaligned
\end{equation}
where $C$ depends only on $d$, $q$ and $\theta$.
\end{thm}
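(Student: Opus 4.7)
The approach is to localize Theorem \ref{thm-G} via a smooth cutoff. Fix $\eta\in C_c^\infty(\R^d)$ with $\eta\equiv 1$ on $B(0,r)$, $\mathrm{supp}\,\eta\subset B(0,Nr)$, and $|\nabla^k\eta|\le C(Nr)^{-k}$ for $k=1,2$, and set $(v,\pi):=(\eta u,\eta(p-\beta))$ for a constant $\beta\in\C$ to be chosen. A direct computation, using $\mathrm{div}\,u=0$ together with the identity $-2\nabla\eta\cdot\nabla u=-2\,\mathrm{div}(u\otimes\nabla\eta)+2u\,\Delta\eta$ that rewrites the transport term in divergence form, shows that $(v,\pi)$ solves the generalized resolvent Stokes system \eqref{G-eq} on $\H_\psi$ with zero Dirichlet data and with
$$F_{\mathrm{new}}=\eta F+u\,\Delta\eta+(p-\beta)\nabla\eta,\qquad f_{\mathrm{new}}=-2\,u\otimes\nabla\eta,\qquad g_{\mathrm{new}}=u\cdot\nabla\eta=\mathrm{div}(\eta u).$$

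Since $\eta\equiv 1$ on $D_r$, Theorem \ref{thm-G} applied to $(v,\pi)$ bounds the left-hand side of \eqref{L-12} by
$$C\bigl(\|F_{\mathrm{new}}\|_{L^q(\H_\psi)}+|\lambda|^{1/2}\|f_{\mathrm{new}}\|_{L^q(\H_\psi)}+|\lambda|^{1/2}\|g_{\mathrm{new}}\|_{L^q(\H_\psi)}+|\lambda|\|g_{\mathrm{new}}\|_{\hW^{-1,q}(\H_\psi)}\bigr).$$
Using the bounds on $\eta$ and the fact that $\nabla\eta$ is supported in the annulus $A:=D_{Nr}\setminus D_{Nr/2}$, the first four terms are bounded by $\|F\|_{L^\infty(D_{Nr})}(Nr)^{d/q}$, $C(Nr)^{-2}\|u\|_{L^q(D_{Nr})}$, $C(Nr)^{-1}\|p-\beta\|_{L^q(D_{Nr})}$, and $C|\lambda|^{1/2}(Nr)^{-1}\|u\|_{L^q(D_{Nr})}$. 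For the $\hW^{-1,q}$-norm of $g_{\mathrm{new}}=\mathrm{div}(\eta u)$, integration by parts against any $\phi\in\hW^{1,q'}(\H_\psi)$ (permitted because $\eta u\in W_0^{1,q}(\H_\psi)$) gives $\langle g_{\mathrm{new}},\phi\rangle=-\int\eta u\cdot\nabla\phi$; subtracting the mean of $\phi$ over $A$ and applying Poincar\'e there reduces this to a bound with the scaling compatible with the $|\lambda|^{1/2}(Nr)^{-1}\|u\|_{L^q(D_{Nr})}$ term in \eqref{L-12}. Converting to averaged form recovers the first three averages on the right of \eqref{L-12}.

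The production of the boundary averages on $I_{Nr}$ in \eqref{L-12} requires an additional input. Taking the divergence of the first equation in \eqref{L-11} and using $\mathrm{div}\,u=0$ gives $\Delta p=\mathrm{div}\,F$ on $D_{Nr}$, so $p$ is nearly harmonic. Choosing $\beta$ so that $p-\beta$ has suitable mean, I would decompose $p-\beta$ into a sum of harmonic extensions of its traces on the two components $I_{Nr}$ and $S_{Nr}:=\partial B(0,Nr)\cap\H_\psi$ of $\partial D_{Nr}$, plus a correction driven by $F$. Interior regularity for harmonic functions absorbs the $S_{Nr}$-extension into the interior $D_{Nr}$-average of $|p|$, while the $I_{Nr}$-extension yields the boundary average $\bigl(\fint_{I_{Nr}}|p|^q\bigr)^{1/q}$ in \eqref{L-12}; a parallel argument using local boundary regularity for the Stokes system on $D_{Nr}$ generates the matching $|\nabla u|$ boundary term.

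The main obstacle is implementing this harmonic/Stokes decomposition with precisely the $N$- and $r$-dependence required by \eqref{L-12}: the cutoff step alone naturally produces only interior $D_{Nr}$-averages, and separating the $I_{Nr}$-trace from the $S_{Nr}$-trace in a scale-invariant way rests on a careful Poisson-integral analysis on the Lipschitz graph domain $D_{Nr}$, together with matching the $|\lambda|\|g_{\mathrm{new}}\|_{\hW^{-1,q}}$ contribution from Theorem \ref{thm-G} to the sharper $|\lambda|^{1/2}(Nr)^{-1}$ scaling appearing in \eqref{L-12}.
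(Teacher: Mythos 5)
The setup (cutoff, generalized resolvent system, applying Theorem \ref{thm-G}) is the right skeleton and essentially matches the paper, up to a cosmetic difference in whether the cross term $-2\nabla\eta\cdot\nabla u$ is placed in $h$ or converted to divergence form and placed in $f$. But there is a genuine gap, and you have located it without closing it.

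The gap is in the treatment of the term $|\lambda|\,\|g_{\mathrm{new}}\|_{\hW^{-1,q}(\H_\psi)}$. Testing $g_{\mathrm{new}}=u\cdot\nabla\eta$ against $\phi\in\hW^{1,q'}$, subtracting a mean, and applying Poincar\'e only gives
$\|g_{\mathrm{new}}\|_{\hW^{-1,q}}\le C\|u\|_{L^q(D_{Nr})}$, hence
$|\lambda|\,\|g_{\mathrm{new}}\|_{\hW^{-1,q}}\le C|\lambda|\,(Nr)^{d/q}\bigl(\fint_{D_{Nr}}|u|^q\bigr)^{1/q}$.
This is not dominated by the $u$-term in \eqref{L-12}, which is only
$\bigl(\fint_{D_{Nr}}|u|^q\bigr)^{1/q}\bigl((Nr)^{d/q-2}+(Nr)^{d/q-1}|\lambda|^{1/2}\bigr)$: the two agree only when $|\lambda|^{1/2}(Nr)\lesssim 1$, which is not assumed. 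The crucial idea you are missing is that one should \emph{not} estimate $|\lambda|\,\|g_{\mathrm{new}}\|_{\hW^{-1,q}}$ directly. Instead, writing $\lambda\langle g,\phi\rangle=\lambda\int_{\H_\psi} (u\cdot\nabla\varphi)(\phi-\beta)$ and substituting the original PDE \eqref{L-11}, i.e.\ $\lambda u = F+\Delta u-\nabla p$, turns the troublesome factor $\lambda$ into derivatives of $u$ and $p$. A subsequent integration by parts against $\nabla\varphi$ and $\phi-\beta$ then yields (i) the correct $(Nr)^{-1}$ scaling on the interior $|\nabla u|+|p|$ average, and (ii) a boundary integral $\int_{\partial\H_\psi} n_j(\partial_j u^i-\delta_{ij}p)(\partial_i\varphi)(\phi-\beta)$, which is precisely where the $I_{Nr}$ averages of $|\nabla u|$ and $|p|$ in \eqref{L-12} come from, via the boundary Poincar\'e inequality \eqref{P-ineq}.

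Your proposed alternative route for producing the boundary terms --- decomposing $p$ into harmonic extensions from $I_{Nr}$ and from the spherical cap, and a separate trace argument for $\nabla u$ --- is not only unnecessary once the substitution-plus-integration-by-parts step is performed, it also would not naturally deliver the \emph{combined} boundary average of $|\nabla u|+|p|$, which enters as a single tensorial expression $n_j(\partial_j u^i-\delta_{ij}p)$ in the boundary term. So while your diagnosis of "the main obstacle" correctly flags the $\hW^{-1,q}$ scaling mismatch and the missing $I_{Nr}$ term, the proposed remedies do not close the gap; the missing mechanism is the use of equation \eqref{L-11} to trade $\lambda$ for derivatives inside the $\hW^{-1,q}$ pairing.
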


\begin{proof}
We begin by choosing a cut-off function $\varphi \in C_0^\infty (B(0, Nr))$ such that 
$\varphi =1$ in $B(0, r)$,
\begin{equation}\label{L-13a}
\|\nabla \varphi \|_\infty \le C (Nr)^{-1} \quad \text{ and } \quad
\|\nabla^2 \varphi \|_\infty \le C (Nr)^{-2},
\end{equation}
where $C$ depends only on $d$.
Note that 
\begin{equation}\label{L-13}
\left\{
\aligned
-\Delta (u \varphi) + \nabla (p \varphi) + \lambda (u \varphi) & = h & \quad & \text{ in } \H_\psi,\\
\text{\rm div} (u \varphi) & = g & \quad & \text{ in } \H_\psi,\\
u \varphi & =0  & \quad & \text{ on } \partial \H_\psi,
\endaligned
\right.
\end{equation}
where
\begin{equation}\label{L-14}
\left\{
\aligned
h & =F \varphi - 2 (\nabla u) (\nabla \varphi) - u \Delta \varphi + p (\nabla \varphi),\\
g & =u \cdot \nabla \varphi.
\endaligned
\right.
\end{equation}
It follows by Theorem \ref{thm-G} that
\begin{equation}\label{L-15}
\aligned
   |\lambda| \| u  & \|_{L^q(D_r)} 
+|\lambda|^{\frac12} \| \nabla u \|_{L^q(D_r)}\\
 &\le  |\lambda| \| u \varphi \|_{L^q(\H_\psi)} + 
|\lambda|^{\frac12} \| \nabla ( u\varphi) \|_{L^q(\H_\psi )}\\
 & \le 
C \left\{ \| h \|_{L^q(\H_\psi)}
+ |\lambda|^{\frac12} \| g \|_{L^q(\H_\psi)}
+ |\lambda|  \| g \|_{\hW^{-1, q}(\H_\psi)} \right\},
\endaligned
\end{equation}
where we have used the fact $\varphi=1$ in $B(0, r)$ for the first inequality.

We now proceed to bound the right-hand side of \eqref{L-15} by the right-hand side of \eqref{L-12}.
Using \eqref{L-13a}, 
it  is not hard to see that
$\| g \|_{L^q(\H_\psi)} \le C  \| u \|_{L^q(D_{Nr})} (Nr)^{-1}$,
 and that 
$$
\aligned
\| h \|_{L^q(\H_\psi)}
& \le C \Big\{
\| F \|_{L^\infty (D_{Nr})}  (Nr)^{\frac{d}{q}}
+ \|\nabla u \|_{L^q(D_{Nr})} (Nr)^{-1}\\
& \qquad\qquad
 + \| p \|_{L^q(D_{Nr})} (Nr)^{-1} 
+ \| u \|_{L^q(D_{Nr})} (Nr)^{-2} \Big\}.
\endaligned
$$
In view of \eqref{L-12},
this gives the desired estimates for the first two terms in the right-hand side of \eqref{L-15}.

It remains to bound the third term $|\lambda| \| g \|_{\hW^{-1, q}(\H_\psi)}$.
To this end,  let $\eta \in \hW^{1, q^\prime}(\H_\psi)$ with $\| \nabla \eta \|_{L^{q^\prime} (\H_\psi)}\le 1$. 
Note that  for any $\beta \in \C$, 
$$
\aligned
\lambda \int_{\H_\psi}  g   \eta
&=\lambda \int_{\H_\psi} \text{\rm div} (u \varphi)   (\eta-\beta)
=\lambda \int_{\H_\psi} (u \cdot \nabla \varphi) (\eta-\beta)\\
&=\int_{\H_\psi}
\{ (F+\Delta u -\nabla p)\cdot \nabla \varphi  \} (\eta -\beta)\\
&=J_1 +J_2,
\endaligned
$$
where we have used \eqref{L-11}.
Let $\beta =\fint_{D_{2Nr}} \eta$ .
Clearly, by H\"older's inequality, 
$$
\aligned
|J_1|
 & =\big|\int_{\H_\psi} (F \cdot \nabla \varphi) ( \eta-\beta)\big|\\
&\le \| F \|_{L^\infty(D_{Nr})}  \|\nabla \varphi \|_{L^q(D_{Nr})}
\| \eta -\beta \|_{L^{q^\prime}(D_{Nr})}.
\endaligned
$$
By applying a Poincar\'e inequality, we obtain 
\begin{equation}
\aligned
|J_1|
 & \le C \| F \|_{L^\infty (D_{Nr})} 
\|\nabla \varphi \|_{L^q(D_{Nr})} \|\nabla \eta \|_{L^{q^\prime}(D_{2Nr})} (Nr) \\
& \le C \| F \|_{L^\infty (D_{Nr})}  (Nr)^{\frac{d}{q}},
\endaligned
\end{equation}
where we have used the fact $\|\nabla \eta \|_{L^{q^\prime}(\H_\psi)} \le 1$.
Next, we use integration by parts to obtain 
$$
\aligned
J_2 & =\int_{\H_\psi} (  (\Delta u-\nabla p) \cdot \nabla\varphi) (\eta-\beta)\\
&=-\int_{\H_\psi} (\partial_j u^i -\delta_{ij} p ) (\partial^2_{ij}  \varphi )  (\eta-\beta)
-\int_{\H_\psi} (\partial_j  u^i -\delta_{ij} p ) (\partial_i  \varphi )(\partial_j \eta)\\
&\qquad\qquad\qquad
 +\int_{\partial H_\psi}  n_j (\partial_j u^i -\delta_{ij} p) ( \partial_i \varphi ) (\eta -\beta)\\
 &=J_{21} +J_{22}+ J_{23},
\endaligned
$$
where $n=(n_1, \dots, n_d)$ denotes the outward unit normal to $\partial \H_{\psi}$ and  
the repeated indices $i, j$ are summed from $1$ to $d$.
We point out that the use of integration by parts  in $J_2$ can be justified by an approximation argument with the observation  \eqref{B1-5}.

Again,  by H\"older's inequality, 
$$
\aligned
|J_{21} +J_{22} |
&\le  C \| |\nabla u | +|p| \|_{L^q(D_{Nr})} \| \eta-\beta \|_{L^{q^\prime}(D_{2Nr})} (Nr)^{-2}\\
& \qquad \qquad
+ C  \| |\nabla u | +|p| \|_{L^q(D_{Nr})} \| \nabla \eta  \|_{L^{q^\prime}(D_{2Nr})} (Nr)^{-1}\\
& \le C  \| |\nabla u | +|p| \|_{L^q(D_{Nr})} (Nr)^{-1},
\endaligned
$$
where  $\beta= \fint_{D_{2Nr} } \eta$ and we have applied  a Poincar\'e inequality.
Similarly,
$$
\aligned
|J_{23} |
& \le C \| |\nabla u| + |p| \|_{L^q(I_{Nr})} \|\eta-\beta \|_{L^{q^\prime}(I_{2Nr})} (Nr)^{-1}\\
& \le C \| |\nabla u | + |p| \|_{L^q(I_{Nr})}  \|\nabla \eta \|_{L^{q^\prime} (\H_\psi)} (Nr)^{-\frac{1}{q^\prime}},
\endaligned
$$
where we have used the Poincar\'e inequality, 
\begin{equation}\label{P-ineq}
\left(\int_{I_R} |\eta -\beta|^{q^\prime} \right)^{1/q^\prime}
\le C  R^{1-\frac{1}{q^\prime}} \left(\int_{D_{2R}} |\nabla \eta|^{q^\prime} \right)^{1/q^\prime},
\end{equation}
with $\beta =\fint_{D_{2R}} \eta $.
See Remark \ref{re-L1} below  for a proof of \eqref{P-ineq}. 
As a result, we have proved that
$$
\aligned
|J_1 + J_2|
& \le C \| F\|_{L^\infty(D_{Nr})}  (Nr)^{\frac{d}{q}}
+ C \| |\nabla u| + |p | \|_{L^q(D_{Nr})} (Nr)^{-1}\\
&\qquad\qquad
+ C \| |\nabla u| + |p | \|_{L^q(I_{Nr}) } (Nr)^{-1 + \frac{1}{q}}.
\endaligned
$$
By duality, we obtain 
$$
\aligned
|\lambda | \| g \|_{\hW^{-1, q}(\H_\psi)}
& \le C \| F\|_{L^\infty(D_{Nr})}  (Nr)^{\frac{d}{q}}
+ C \| |\nabla u| + |p | \|_{L^q(D_{Nr})} (Nr)^{-1}\\
&\qquad\qquad
+ C \| |\nabla u| + |p | \|_{L^q(I_{Nr}) } (Nr)^{-1 + \frac{1}{q}}.
\endaligned
$$
This, together with \eqref{L-15} as well as the estimates for $\| g \|_{L^q(\H_\psi)}$  and $\| h \|_{L^q(\H_\psi)}$,
completes  the proof of  \eqref{L-12}.
\end{proof}

\begin{remark}\label{re-L1}
The  Poincar\'e inequality \eqref{P-ineq},  with $\beta =\fint_{D_{2R}} \eta$ and $1<q^\prime< \infty$,
holds for any Lipschitz graph domain $\H_\psi$. The smallness 
condition $\|\nabla^\prime \psi \|_\infty\le c_0$ is not needed.
To see this, by dilation, it suffices to consider the case $R=1$.
Let $\varphi \in C_0^\infty(B(0, 2))$ such that  $0 \le \varphi \le 1$  and $\varphi =1$ on $B(0, 1)$.
Note that $n_d =- ( 1+|\nabla^\prime \psi|^2 )^{-1/2}$ on $\partial \H_{\psi}$.
Using integration by parts, we have 
$$
\aligned
(1+ \|\nabla^\prime \psi \|_\infty^2)^{-1/2} 
 \int_{I_1}
|\eta-\beta|^{q^\prime}
 & \le  - \int_{\partial D_2} n_d \varphi | \eta -\beta|^{q^\prime}\\
 & =- \int_{D_2} \partial_d \varphi | \eta-\beta|^{q^\prime}
 -  \int_{D_2} \varphi \partial_d |\eta -\beta|^{q^\prime}\\
 & \le C \int_{D_2} |\eta-\beta|^{q^\prime}
 + C \int_{D_2} |\eta-\beta|^{q^\prime-1} |\nabla \eta|\\
 &\le C \int_{D_2} |\eta -\beta|^{q^\prime}
 + C \int_{D_2} |\nabla \eta|^{q^\prime}\\
 &\le C \int_{D_2} |\nabla \eta|^{q^\prime},
\endaligned
$$
where we have used H\"older's inequality as well as the Poincar\'e inequality 
$\| \eta -\beta \|_{L^{q^\prime} (D_2)} \le C \| \nabla \eta \|_{L^{q^\prime}(D_2)}$.
\end{remark}

The next theorem provides the generalized resolvent estimates for the Stokes equations in $\R^d$.
See \cite[Theorem 2.1] {GS-2023} for a proof.

\begin{thm}\label{R-thm-1} 
Let  $1<q<\infty$ and $\lambda\in \Sigma_\theta $.
For any $F\in L^q(\Rd; \Cd)$,  $f\in L^q(\Rd; \Cdd)$, and $g\in L^q(\Rd; \C)\cap \hW^{-1, q}(\Rd; \C)$,
there exists  a unique $u\in W^{1, q}(\Rd; \Cd) $ such that 
\begin{equation}\label{R-eq}
\left\{
\aligned
-\Delta u +\nabla p +\lambda u & = F +\text{\rm div} (f) ,\\
\text{\rm div} (u) & =g 
\endaligned
\right.
\end{equation}
hold in $\Rd$ for some $p\in L^1_{\loc}(\Rd; \C)$ in the sense of distributions.
Moreover, the solution  satisfies the estimate,
\begin{equation}\label{R-est-1}
\left\{
\aligned
|\lambda |^{1/2} \| \nabla u \|_{L^q (\Rd)} 
 & \le C \left\{ \| F \|_{L^q(\Rd)} + |\lambda|^{1/2} \| f \|_{L^q(\Rd)} + |\lambda|^{1/2} \| g \|_{L^q(\Rd)} \right\}, \\
|\lambda|
\| u \|_{L^q(\Rd)}
& \le C \left\{   \| F \|_{L^q(\Rd)} + |\lambda|^{1/2} \| f \|_{L^q(\Rd)} +  |\lambda | \| g \|_{\hW^{-1, q} (\Rd)} \right\}, 
\endaligned
\right.
\end{equation}
and $p\in L^q(\Rd; \C) + \hW^{1, q} (\Rd; \C)$, 
where $C$ depends on $d$, $q$ and $\theta$.
\end{thm}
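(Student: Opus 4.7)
The plan is to reduce everything to explicit Fourier multiplier estimates of Mikhlin type, uniformly in the parameter $\lambda\in\Sigma_\theta$. Applying the Fourier transform to \eqref{R-eq}, the system decouples algebraically: writing $\xi\cdot\hat u=-i\hat g$ (from the divergence equation) and taking the inner product of the momentum equation with $\xi$, one solves for $\hat p$ as
\begin{equation*}
\hat p(\xi)=\hat g(\xi)+\frac{\lambda}{|\xi|^2}\hat g(\xi)-\frac{i\xi_k}{|\xi|^2}\hat F_k(\xi)+\frac{\xi_k\xi_\ell}{|\xi|^2}\hat f_{k\ell}(\xi),
\end{equation*}
and then substitutes back to get an explicit formula for $\hat u$ of the form
\begin{equation*}
\hat u_j(\xi)=\frac{\hat F_j-\xi_j\xi_k|\xi|^{-2}\hat F_k}{|\xi|^2+\lambda}+\frac{i\xi_i\hat f_{ji}-i\xi_j\xi_k\xi_\ell|\xi|^{-2}\hat f_{k\ell}}{|\xi|^2+\lambda}-\frac{i\xi_j}{|\xi|^2}\hat g.
\end{equation*}
This reduces the problem to understanding three families of operators: the scalar resolvent multiplier $(|\xi|^2+\lambda)^{-1}$ (and its $\xi$-derivatives), the Riesz transforms $\xi_j\xi_k|\xi|^{-2}$, and the single-derivative operator $\xi_j|\xi|^{-2}$ acting on $g$.

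Next I would verify the required symbol bounds. Because $\lambda\in\Sigma_\theta$ stays away from the negative real axis, one has $|\,|\xi|^2+\lambda\,|\ge c(\theta)(|\xi|^2+|\lambda|)$ for every $\xi\in\R^d$, so the symbols
\begin{equation*}
m_\lambda^{(0)}(\xi)=\frac{|\lambda|}{|\xi|^2+\lambda},\qquad m_\lambda^{(1)}(\xi)=\frac{|\lambda|^{1/2}\xi_i}{|\xi|^2+\lambda},\qquad m_\lambda^{(2)}(\xi)=\frac{\xi_i\xi_j}{|\xi|^2+\lambda}
\end{equation*}
all satisfy $|\xi|^{|\alpha|}|\partial^\alpha m_\lambda^{(k)}(\xi)|\le C(\theta,\alpha)$ uniformly in $\lambda$, as can be seen either by the scaling $\xi\mapsto|\lambda|^{1/2}\xi$ (which reduces to the fixed symbol with $\lambda/|\lambda|$ on the unit sphere in $\Sigma_\theta$) or by direct differentiation. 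The Mikhlin multiplier theorem therefore gives the $L^q(\R^d)\to L^q(\R^d)$ boundedness of the corresponding operators for every $1<q<\infty$, with constants depending only on $d,q,\theta$. Combined with the classical $L^q$-boundedness of the Riesz transforms, this immediately yields the estimates
\begin{equation*}
|\lambda|\,\|u\|_{L^q}+|\lambda|^{1/2}\|\nabla u\|_{L^q}\le C\bigl\{\|F\|_{L^q}+|\lambda|^{1/2}\|f\|_{L^q}\bigr\}+\text{(contribution from }g\text{)}.
\end{equation*}

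To absorb $g$ correctly into the two different right-hand sides in \eqref{R-est-1}, I would handle the $\xi_j|\xi|^{-2}\hat g$ term in two ways. For the $|\lambda|^{1/2}\|\nabla u\|_{L^q}$ bound, the relevant symbol is $|\lambda|^{1/2}\xi_j\xi_i|\xi|^{-2}$ applied to $g$, which after composing with a Riesz transform is simply $|\lambda|^{1/2}g$, giving the term $|\lambda|^{1/2}\|g\|_{L^q}$. For the $|\lambda|\|u\|_{L^q}$ bound, I would use the definition of $\hW^{-1,q}(\R^d)$ to write $g=\partial_k G_k$ with $\|G\|_{L^q}\lesssim\|g\|_{\hW^{-1,q}}$; then $\xi_j|\xi|^{-2}\hat g=i\xi_j\xi_k|\xi|^{-2}\hat G_k$ is again a Riesz combination of $G$, producing $|\lambda|\|g\|_{\hW^{-1,q}}$. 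The same symbol analysis, applied to the formula for $\hat p$, shows that $p$ splits as an $L^q$ part plus a $\hW^{1,q}$ part.

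Existence then follows by a density argument: solve the problem first for $F,f,g\in\mathcal{S}(\R^d)$ via the explicit Fourier formulas, extend by the a priori estimates just established, and pass to the limit. Uniqueness is immediate from the estimates applied to the difference of two solutions. The step I expect to require most care is the symbol analysis for $m_\lambda^{(0)},m_\lambda^{(1)},m_\lambda^{(2)}$ on the full sector $\Sigma_\theta$; the scaling argument makes this a one-parameter family of Mikhlin multipliers indexed by $\lambda/|\lambda|\in\Sigma_\theta$, and one must check that the Mikhlin constants are bounded uniformly on this compact arc—this uses precisely that the sector excludes the negative real axis, which is where the symbols would degenerate.
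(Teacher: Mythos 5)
Your proposal is correct, and it is worth noting that the paper itself does not prove this statement: Theorem \ref{R-thm-1} is quoted from \cite[Theorem 2.1]{GS-2023}, so the only comparison available is with the standard whole-space argument, which is exactly what you give. Your Fourier computation checks out: the formula for $\hat p$ and the resulting
$\hat u_j=\bigl(\hat F_j-\xi_j\xi_k|\xi|^{-2}\hat F_k\bigr)(|\xi|^2+\lambda)^{-1}+\bigl(i\xi_k\hat f_{jk}-i\xi_j\xi_k\xi_\ell|\xi|^{-2}\hat f_{k\ell}\bigr)(|\xi|^2+\lambda)^{-1}-i\xi_j|\xi|^{-2}\hat g$
are right, the sector bound $|\,|\xi|^2+\lambda\,|\ge c(\theta)(|\xi|^2+|\lambda|)$ gives uniform Mikhlin constants, and your two-sided treatment of the $g$-term (as is for $\nabla u$, rewritten as $g=\operatorname{div}G$ with $\|G\|_{L^q}\lesssim\|g\|_{\hW^{-1,q}}$ for $u$ itself) is precisely how the two different right-hand sides in \eqref{R-est-1} arise; the splitting $p\in L^q+\hW^{1,q}$ also falls out of your formula for $\hat p$. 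Two small points to tighten: the set $\{\lambda/|\lambda|:\lambda\in\Sigma_\theta\}$ is an open arc, not compact, though its closure still avoids $-1$ so uniformity is unaffected; and uniqueness is not quite ``immediate from the estimates,'' since an arbitrary solution pair $(u,p)$ with $p\in L^1_{\loc}$ is not a priori given by your Fourier formula --- you should first take the divergence of the homogeneous equation to get $\Delta p=0$, observe that $\nabla p=\Delta u-\lambda u$ is a harmonic tempered distribution and hence a polynomial, conclude $\nabla p=0$ because a nonzero polynomial cannot lie in $W^{-1,q}+L^q$, and only then invoke $(|\xi|^2+\lambda)\hat u=0$.
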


Let $B_r=B(0, r)$. The following theorem is the interior version of Theorem \ref{thm-L}.

\begin{thm}\label{thm-L2}
Let $ \lambda\in \Sigma_\theta$ and $2< q<\infty$.
Let $(u, p)\in W^{1, q} (B_{Nr} ; \C^d) \times L^q(B_{Nr} ; \C)$ be a weak solution of
\begin{equation}\label{L-21}
\left\{
\aligned
-\Delta u +\nabla p +\lambda u & = F &\quad & \text{ in } B_{Nr} ,\\
\text{\rm div} (u) & =0 & \quad & \text{ in } B_{Nr}  ,\\
\endaligned
\right.
\end{equation}
where $N\ge 2$ and $F\in L^\infty(B_{Nr} ; \C^d)$. 
Then 
\begin{equation}\label{L-22}
\aligned
&  |\lambda| \| u \|_{L^q(B_r) } 
+|\lambda|^{\frac12} \| \nabla u \|_{L^q(B_r )}\\
  & \le 
C \Bigg\{
\| F \|_{L^\infty (B_{Nr})}  (Nr)^{\frac{d}{q}}
+ \left(\fint_{B_{Nr}} |u|^q\right)^{1/q}
\left(
(Nr)^{\frac{d}{q}-2} + (Nr)^{\frac{d}{q}-1} |\lambda|^{\frac12} \right) \\
 & \quad\qquad + \left(\fint_{B_{Nr}} ( |\nabla u| + |p|)^q \right)^{1/q} (Nr)^{\frac{d}{q}-1}
\Bigg\},
\endaligned
\end{equation}
where $C$ depends only on $d$, $q$ and $\theta$.
\end{thm}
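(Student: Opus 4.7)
\medskip

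The plan is to mimic the proof of Theorem \ref{thm-L}, but with Theorem \ref{R-thm-1} replacing Theorem \ref{thm-G} and with no boundary integrals to deal with. First I would pick a cut-off $\varphi\in C_0^\infty(B_{Nr})$ with $\varphi\equiv 1$ on $B_r$, $\|\nabla\varphi\|_\infty\le C(Nr)^{-1}$ and $\|\nabla^2\varphi\|_\infty\le C(Nr)^{-2}$. Multiplying \eqref{L-21} by $\varphi$ and commuting with derivatives in the usual way gives that $(u\varphi, p\varphi)$ satisfies an equation on all of $\R^d$,
\begin{equation*}
-\Delta(u\varphi)+\nabla(p\varphi)+\lambda(u\varphi)=h,\qquad \text{\rm div}(u\varphi)=g
\end{equation*}
with $h=F\varphi-2(\nabla u)(\nabla\varphi)-u\,\Delta\varphi+p\,\nabla\varphi$ and $g=u\cdot\nabla\varphi$, both compactly supported in $B_{Nr}$.

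Next I would apply the whole-space generalized resolvent estimate from Theorem \ref{R-thm-1} to $(u\varphi, p\varphi)$. Since $\varphi\equiv 1$ on $B_r$,
\begin{equation*}
|\lambda|\|u\|_{L^q(B_r)}+|\lambda|^{1/2}\|\nabla u\|_{L^q(B_r)}\le C\left\{\|h\|_{L^q(\R^d)}+|\lambda|^{1/2}\|g\|_{L^q(\R^d)}+|\lambda|\|g\|_{\hW^{-1,q}(\R^d)}\right\}.
\end{equation*}
The first two terms are estimated directly from the definitions of $h$ and $g$, exactly as in the proof of Theorem \ref{thm-L}: $\|g\|_{L^q(\R^d)}\le C\|u\|_{L^q(B_{Nr})}(Nr)^{-1}$, and
\begin{equation*}
\|h\|_{L^q(\R^d)}\le C\Big\{\|F\|_{L^\infty(B_{Nr})}(Nr)^{d/q}+\|\nabla u\|_{L^q(B_{Nr})}(Nr)^{-1}+\|p\|_{L^q(B_{Nr})}(Nr)^{-1}+\|u\|_{L^q(B_{Nr})}(Nr)^{-2}\Big\},
\end{equation*}
which matches the right-hand side of \eqref{L-22} after rescaling to averages.

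The only term that requires real work is $|\lambda|\|g\|_{\hW^{-1,q}(\R^d)}$. To bound it by duality, I fix $\eta\in \hW^{1,q'}(\R^d)$ with $\|\nabla\eta\|_{L^{q'}(\R^d)}\le 1$, set $\beta=\fint_{B_{2Nr}}\eta$, and write
\begin{equation*}
\lambda\int_{\R^d} g\,\eta=\lambda\int_{\R^d}(u\cdot\nabla\varphi)(\eta-\beta)=\int_{\R^d}\{(F+\Delta u-\nabla p)\cdot\nabla\varphi\}(\eta-\beta),
\end{equation*}
using \eqref{L-21}. The $F$-term is handled by Hölder together with the Poincaré inequality $\|\eta-\beta\|_{L^{q'}(B_{2Nr})}\le C(Nr)\|\nabla\eta\|_{L^{q'}(B_{2Nr})}$, giving a contribution bounded by $\|F\|_{L^\infty(B_{Nr})}(Nr)^{d/q}$. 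For the $\Delta u$ and $\nabla p$ terms I integrate by parts in $B_{2Nr}$ (no boundary contribution since $\varphi$ is compactly supported in $B_{Nr}$, so $J_{23}$ from the graph case disappears); the resulting two integrals are estimated by $C\||\nabla u|+|p|\|_{L^q(B_{Nr})}(Nr)^{-1}$ via Hölder and the same Poincaré inequality.

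Combining these bounds and taking the supremum over $\eta$ yields the required control of $|\lambda|\|g\|_{\hW^{-1,q}(\R^d)}$, and putting everything back into the whole-space resolvent estimate gives \eqref{L-22}. There is no substantive obstacle here: the interior setting removes the need for the trace Poincaré inequality \eqref{P-ineq} and the boundary term $J_{23}$, so the argument is strictly simpler than that of Theorem \ref{thm-L}; the only point to double-check is the justification of integration by parts on $B_{2Nr}$, which follows from the regularity of $(u,p)\in W^{1,q}\times L^q$ and the compact support of $\nabla\varphi$ inside $B_{Nr}$.
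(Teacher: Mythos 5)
Your proposal is correct and follows essentially the same route as the paper: the paper's proof of Theorem \ref{thm-L2} is precisely to repeat the argument of Theorem \ref{thm-L} with the whole-space resolvent estimate (Theorem \ref{R-thm-1}) in place of Theorem \ref{thm-G}, noting that no boundary integral ($J_{23}$) arises. Your treatment of the cut-off, the terms $h$ and $g$, and the duality bound for $|\lambda|\|g\|_{\hW^{-1,q}(\R^d)}$ matches the intended argument.
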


\begin{proof}
The proof is similar to and slightly easier than  that of Theorem \ref{thm-L}.
Choose a cut-off function $\varphi\in C_0^\infty (B_{Nr})$ with the properties  \eqref{L-13a}.
Note that  $u \varphi$ satisfies the resolvent Stokes equations \eqref{L-13} with $\H_\psi$ replaced by  $\R^d$.
We then apply the resolvent estimates \eqref{R-est-1}  in $\R^d$.
The rest of the proof is the same as that of Theorem \ref{thm-L}.
We point out  that no boundary integral appears in the estimates.
\end{proof}

%%%%%%%%%%%%%%%%%%

\section{Estimates for the velocity}\label{section-V}

Recall that $B_r =B(0, r)$ and $D_r=B(0, r) \cap \H_\psi$, where $\H_\psi$ is given by \eqref{H}.

\begin{lemma}\label{lemma-V1}
Suppose $u\in W^{1, q}(B_r)$ for some $q>d$. Then
\begin{equation}\label{V1-1}
\| u \|_{L^\infty(B_r)}
\le C r^{-\frac{d}{q}}
\left\{ \| u \|_{L^q(B_r)}
+  r \| \nabla u \|_{L^q(B_r)} \right\},
\end{equation}
where $C$ depends only on $d$ and $q$.
\end{lemma}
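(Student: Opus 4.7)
The plan is to derive this from the standard Sobolev/Morrey embedding $W^{1,q}(B_1) \hookrightarrow L^\infty(B_1)$ (valid for $q>d$) by a scaling argument, which is the natural way to obtain the correct $r$-dependence on the right-hand side.

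First, I would rescale to the unit ball. Set $v(y) = u(r y)$ for $y \in B_1$, so that $\nabla v(y) = r (\nabla u)(r y)$. A change of variables gives
\begin{equation*}
\|v\|_{L^q(B_1)}^q = r^{-d} \|u\|_{L^q(B_r)}^q, \qquad
\|\nabla v\|_{L^q(B_1)}^q = r^{q-d} \|\nabla u\|_{L^q(B_r)}^q,
\end{equation*}
while $\|v\|_{L^\infty(B_1)} = \|u\|_{L^\infty(B_r)}$.

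Next, I would apply the Morrey embedding on the fixed domain $B_1$: since $q>d$, there is a constant $C=C(d,q)$ such that
\begin{equation*}
\|v\|_{L^\infty(B_1)} \le C \bigl( \|v\|_{L^q(B_1)} + \|\nabla v\|_{L^q(B_1)} \bigr)
\end{equation*}
for all $v \in W^{1,q}(B_1)$. This is the classical Sobolev inequality (e.g., proved via the Morrey representation of $v$ by the Riesz potential of $\nabla v$ together with Hölder's inequality), and no hypothesis on $r$ enters at this stage.

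Substituting the scaled identities and factoring out $r^{-d/q}$ from both norms yields
\begin{equation*}
\|u\|_{L^\infty(B_r)} \le C r^{-d/q} \bigl( \|u\|_{L^q(B_r)} + r \|\nabla u\|_{L^q(B_r)} \bigr),
\end{equation*}
which is exactly \eqref{V1-1}. There is no real obstacle here: the content is entirely in the Sobolev embedding for $q>d$, and scaling tracks the geometric factor $r^{-d/q}$ together with the extra $r$ in front of $\|\nabla u\|_{L^q(B_r)}$ automatically.
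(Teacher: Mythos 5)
Your proof is correct and is exactly the argument the paper intends: the paper's proof is the one-line remark that the lemma ``follows from a Sobolev inequality in $B_1$ by rescaling,'' which is precisely the Morrey embedding plus scaling computation you carried out. Nothing further is needed.
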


\begin{proof}
The lemma is well known and follows from a Sobolev inequality in $B_1$ by rescaling.
\end{proof}

\begin{lemma}\label{lemma-V2}
Let $u \in W^{1, q}(D_{2r})$ for some $q>d$.
Suppose $\|\nabla^\prime \psi  \|_\infty\le M$ for some $M>0$.
Then
\begin{equation}\label{V2-1}
\| u \|_{L^\infty(D_r)}
\le C r^{-\frac{d}{q}}
\left\{ \| u \|_{L^q(D_{2r})}
+  r \| \nabla u \|_{L^q(D_{2r} )} \right\},
\end{equation}
where $C$ depends only on $d$, $q$ and $M$.
\end{lemma}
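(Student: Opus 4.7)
The plan is to reduce to the case $r=1$ by a standard rescaling and then apply the Morrey--Sobolev embedding $W^{1,q}\hookrightarrow L^\infty$ for $q>d$ on the Lipschitz domain $D_2^{\tilde\psi}$, whose Lipschitz character depends only on $M$. For the rescaling, set $\tilde u(y):=u(ry)$ and $\tilde\psi(y'):=r^{-1}\psi(ry')$, so that $\tilde u\in W^{1,q}(D_2^{\tilde\psi})$ and $\|\nabla'\tilde\psi\|_\infty = \|\nabla'\psi\|_\infty \le M$. A direct change of variables yields
\[
\|\tilde u\|_{L^q(D_2^{\tilde\psi})} = r^{-d/q}\,\|u\|_{L^q(D_{2r})}, \qquad \|\nabla\tilde u\|_{L^q(D_2^{\tilde\psi})} = r^{1-d/q}\,\|\nabla u\|_{L^q(D_{2r})},
\]
together with $\|\tilde u\|_{L^\infty(D_1^{\tilde\psi})} = \|u\|_{L^\infty(D_r)}$. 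Consequently \eqref{V2-1} reduces to the scale-invariant statement
\[
\|w\|_{L^\infty(D_1^{\phi})} \le C\bigl(\|w\|_{L^q(D_2^{\phi})} + \|\nabla w\|_{L^q(D_2^{\phi})}\bigr), \qquad C = C(d,q,M),
\]
for every Lipschitz $\phi$ satisfying $\|\nabla'\phi\|_\infty \le M$.

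For this scale-invariant statement, $D_2^{\phi}$ is a bounded Lipschitz domain whose Lipschitz character (size of defining graphs in boundary coordinate charts and separation from self-intersection) is controlled solely by $d$ and $M$, so the classical Morrey embedding on bounded Lipschitz domains gives the bound directly, after restricting the $L^\infty$ norm to $D_1^{\phi}\subset D_2^{\phi}$. A more self-contained route, if one wishes to avoid citing the Lipschitz-domain embedding, is to flatten the graph via the bi-Lipschitz map $\Phi(y',y_d):=(y', y_d+\phi(y'))$, whose Jacobian equals $1$ and whose bi-Lipschitz constants depend only on $M$, transport $w$ to $v:=w\circ\Phi$ on a bounded subset of $\Rdp$ (with $M$-equivalent $W^{1,q}$ norms), extend $v$ by even reflection across $\{y_d=0\}$ to a $W^{1,q}$ function on a full ball, and then apply Lemma \ref{lemma-V1}. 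The only point to verify is that the resulting embedding constant depends only on $d$, $q$, and $M$; this follows from the quantitative bi-Lipschitz bounds on $\Phi$ (equivalently, the Lipschitz character of $D_2^\phi$), and no substantive obstacle is expected beyond careful bookkeeping of constants through the rescaling and the flattening map.
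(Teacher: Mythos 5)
Your rescaling is carried out correctly (it is worth noting $\nabla'\tilde\psi(y')=(\nabla'\psi)(ry')$, so the Lipschitz constant is preserved, and $\tilde\psi(0)=0$), and your ``self-contained route''---flatten with $\Phi(y',y_d)=(y',y_d+\phi(y'))$, reflect evenly, and invoke Lemma~\ref{lemma-V1}---is essentially the paper's own sketch (the paper handles the flat case with the Riesz-potential bound $|u(x)-\fint_{B_1^+}u|\le C\int_{B_1^+}|\nabla u(y)|/|x-y|^{d-1}\,dy$ and then flattens and covers; even reflection followed by Lemma~\ref{lemma-V1} achieves the same thing). Two small points are worth flagging. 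First, your ``direct'' route---citing the Morrey embedding for a bounded Lipschitz domain applied to $D_2^\phi=B(0,2)\cap\H_\phi$---glosses over a genuine subtlety: $D_2^\phi$ has a ``ridge'' where the sphere $\partial B(0,2)$ meets the graph $\partial\H_\phi$, and when $M\ge 1$ these two surfaces can be tangent there, producing a cusp; so the Lipschitz character of $D_2^\phi$ is not controlled by $d$ and $M$ alone without further argument. This is harmless because the $L^\infty$ norm is only taken on $D_1^\phi$, which stays a unit distance from that ridge, but it means the clean ``apply Morrey to the Lipschitz domain $D_2^\phi$'' statement is not quite right as written. Second, after reflection, $\Phi^{-1}(D_2^\phi)$ is not a ball but an $M$-controlled bi-Lipschitz image of a half-ball; to get $L^\infty$ control on all of $D_1^\phi$ you still need the covering step (apply Lemma~\ref{lemma-V1} on interior balls and the reflected estimate on boundary balls of radius comparable to $c(M)r$), which is exactly the ``covering argument'' the paper invokes. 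Your closing remark acknowledges this bookkeeping, so the argument is sound; it is just that ``extend \dots\ to a full ball'' should be read as ``extend locally and cover''.
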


\begin{proof}
This lemma is also more or less well known.
See \cite{AGH-2015} for a proof in the case of uniform $C^1$ domains by a compactness argument.
Here we sketch a direct argument for the case of Lipschitz domains.
By dilation we may assume $r=1$.
Moreover, in the case of the flat boundary where  $\psi=0$, the estimate follows from the inequality,
\begin{equation}
\Big|u(x) -\fint_{B^+_1} u\Big |
\le C \int_{B^+_1} \frac{|\nabla u(y)|}{|x-y|^{d-1}} \, dy
\end{equation}
for any $x\in B_1^+$, where $B_1^+=\{(x^\prime, x_d) \in B_1: x_d>0 \}$.
In the general case, by flattening the boundary, one may show that 
$$
\| u \|_{L^\infty(D_r)}
\le C r^{-\frac{d}{q}}
\left\{ \| u \|_{L^q(D_{C_0r})}
+  r \| \nabla u \|_{L^q(D_{C_0r} )} \right\},
$$
for some $C_0= C_0(M)>1$.
This, together with \eqref{V1-1}, yields \eqref{V2-1} by a covering argument.
\end{proof}

\begin{lemma}\label{lemma-V3}
Let $(u, p)$ be the same as in Theorem \ref{thm-L2}.
Suppose  $q>d$ and $r=R |\lambda|^{-1/2}$ for some $R\ge 1$.
Then
\begin{equation}\label{V3-00}
\aligned
|\lambda| \| u \|_{L^\infty(B_r)}
& \le C \left\{
\| F \|_{L^\infty (B_{Nr})}  R N^{\frac{d}{q}}
+ |\lambda| \| u \|_{L^\infty(B_{Nr})} N^{\frac{d}{q}-1}\right\}\\
& +C |\lambda|^{\frac12} \left(\fint_{B_{Nr}} |\nabla u|^q \right)^{1/q}  N^{\frac{d}{q}-1} 
+ C |\lambda|^{\frac12} \inf_{\beta\in \C} \left(\fint_{B_{Nr}} |p-\beta |^q \right)^{1/q}  N^{\frac{d}{q}-1},
\endaligned
\end{equation}
where $C$ depends only on $d$, $\theta$ and  $q$.
\end{lemma}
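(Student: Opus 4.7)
The plan is to combine the Sobolev-type embedding in Lemma \ref{lemma-V1} with the localized interior resolvent estimate in Theorem \ref{thm-L2}. Since $q>d$, Lemma \ref{lemma-V1} applied on $B_r$ gives
\begin{equation*}
|\lambda|\,\|u\|_{L^\infty(B_r)} \leq C\,r^{-d/q}\bigl\{|\lambda|\,\|u\|_{L^q(B_r)} + r|\lambda|\,\|\nabla u\|_{L^q(B_r)}\bigr\}.
\end{equation*}
Using the identity $r|\lambda| = R|\lambda|^{1/2}$ coming from $r = R|\lambda|^{-1/2}$, and $R\geq 1$, this reduces to
\begin{equation*}
|\lambda|\,\|u\|_{L^\infty(B_r)} \leq C R\, r^{-d/q}\bigl\{|\lambda|\,\|u\|_{L^q(B_r)} + |\lambda|^{1/2}\,\|\nabla u\|_{L^q(B_r)}\bigr\}.
\end{equation*}

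Next I feed in Theorem \ref{thm-L2}. The matching of exponents on $N$ and $R$ is the content of the proof: the product $R\,r^{-d/q}(Nr)^{d/q} = R N^{d/q}$ produces the $\|F\|_{L^\infty(B_{Nr})}$ term; the product $R\,r^{-d/q}(Nr)^{d/q-2} = R^{-1}|\lambda| N^{d/q-2}$ together with $R\,r^{-d/q}(Nr)^{d/q-1}|\lambda|^{1/2} = |\lambda| N^{d/q-1}$ combine (using $R\geq 1$ and $N\geq 2$, so that $N^{d/q-2}\leq N^{d/q-1}$) and, after bounding $(\fint_{B_{Nr}}|u|^q)^{1/q}\leq \|u\|_{L^\infty(B_{Nr})}$, give the $|\lambda|\,\|u\|_{L^\infty(B_{Nr})}\,N^{d/q-1}$ term; finally $R\,r^{-d/q}(Nr)^{d/q-1} = |\lambda|^{1/2}N^{d/q-1}$ produces the coefficient in front of the $|\nabla u|$ and $|p|$ averages.

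To introduce the infimum over $\beta\in\C$ on the pressure term, note that replacing $p$ with $p-\beta$ does not alter the equations in \eqref{L-21}, so Theorem \ref{thm-L2} applies equally to $(u,p-\beta)$ for any constant $\beta$. A triangle inequality split of $(\fint_{B_{Nr}}(|\nabla u|+|p-\beta|)^q)^{1/q}$ into the sum of $(\fint|\nabla u|^q)^{1/q}$ and $(\fint|p-\beta|^q)^{1/q}$, followed by taking the infimum in $\beta$, produces the two separated terms on the right-hand side of \eqref{V3-00}. There is no real obstacle: the lemma is a careful bookkeeping of the exponents of $N$ and $R$ that arise when one couples the Morrey-type embedding at scale $r=R|\lambda|^{-1/2}$ with the localized $L^q$ resolvent estimate of Theorem \ref{thm-L2}.
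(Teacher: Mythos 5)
Your proposal is correct and follows essentially the same route as the paper: apply Lemma \ref{lemma-V1} on $B_r$, use $r|\lambda|=R|\lambda|^{1/2}$ with $R\ge 1$ to pull out the factor $R$, insert the localized estimate of Theorem \ref{thm-L2}, track the powers of $N$ and $R$ exactly as you do, and obtain the infimum over $\beta$ by noting that $(u,p-\beta)$ is again a solution. The exponent bookkeeping checks out, so nothing further is needed.
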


\begin{proof}
It follows from \eqref{V1-1} that 
\begin{equation}\label{V3-1}
\aligned
|\lambda| \| u \|_{L^\infty(B_r)}
 & \le C r^{-\frac{d}{q}}
\left\{ |\lambda | \| u \|_{L^q(B_r)}
+|\lambda|^{\frac12} R  \| \nabla u\|_{L^q(B_r)} \right\}\\
& \le C  R r^{-\frac{d}{q}}
\left\{ |\lambda | \| u \|_{L^q(B_r)}
+|\lambda|^{\frac12}   \| \nabla u\|_{L^q(B_r)} \right\},
\endaligned
\end{equation}
where we have used the assumptions  that $q>d$ and $r=R  |\lambda|^{-1/2}$ for some $R\ge 1$.
This, together with \eqref{L-22}, gives
\begin{equation}\label{V3-2}
\aligned
|\lambda| \| u \|_{L^\infty(B_r)}
 & \le C R  \Bigg\{
\| F \|_{L^\infty(B_{Nr})}  N^{\frac{d}{q}}
+  \| u \|_{L^\infty(B_{Nr})} \left( N^{\frac{d}{q}-2} r^{-2} + N^{\frac{d}{q} -1} r^{-1} |\lambda|^{\frac12}\right)\\
& \qquad \qquad+  \left( \fint_{B_{Nr}} |\nabla u|^q \right)^{1/q}  N^{\frac{d}{q}-1} r^{-1} 
+ \left(\fint_{B_{Nr}} |p|^q \right)^{1/q}  N^{\frac{d}{q}-1}  r^{-1} \Bigg\}\\
& \le C   \Bigg\{
\| F \|_{L^\infty(B_{Nr})}   R N^{\frac{d}{q}}
+  |\lambda| \| u \|_{L^\infty(B_{Nr})}  N^{\frac{d}{q}-1}\\
& \qquad \qquad+  |\lambda|^{\frac12}  \left(\fint_{B_{Nr}} |\nabla u|^q \right)^{1/q}  N^{\frac{d}{q}-1} 
+  |\lambda|^{\frac12} \left(\fint_{B_{Nr}} |p|^q \right)^{1/q}  N^{\frac{d}{q}-1}   \Bigg\}.
\endaligned
\end{equation}
The estimate \eqref{V3-00} follows from \eqref{V3-2}, 
as $(u, p-\beta)$ is also a solution for any $\beta \in \C$.
\end{proof}

The next lemma is the boundary version of Lemma \ref{lemma-V3}.

\begin{lemma}\label{lemma-V4}
Let $(u, p)$ be the same as in Theorem \ref{thm-L}.
Suppose  $q>d$ and $r=R |\lambda|^{-1/2}$ for some $R\ge 1$.
Then
\begin{equation}\label{V3-0}
\aligned
|\lambda| \| u \|_{L^\infty(D_r)}
& \le C \left\{
\| F \|_{L^\infty(D_{Nr})} R  N^{\frac{d}{q}}
+ |\lambda| \| u \|_{L^\infty(D_{Nr})} N^{\frac{d}{q}-1}\right\}\\
& \qquad
+ C |\lambda|^{\frac12} N^{\frac{d}{q}-1} \left\{ \left(\fint_{D_{Nr}}  |\nabla u|^q \right)^{1/q} 
+ \left(\fint_{I_{Nr}} |\nabla u|^q \right)^{1/q} \right\}\\
& \qquad
+ C |\lambda|^{\frac12} N^{\frac{d}{q}-1}  \inf_{\beta\in \C}
\left\{ 
 \left(\fint_{D_{Nr}} |p-\beta |^q \right)^{1/q}  
 + \left(\fint_{I_{Nr}} |p-\beta |^q \right)^{1/q}  \right\},
\endaligned
\end{equation}
where $C$ depends only on $d$, $\theta$ and  $q$.
\end{lemma}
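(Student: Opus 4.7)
The plan is to mirror the proof of Lemma \ref{lemma-V3} with the interior tools replaced by their boundary analogues: Lemma \ref{lemma-V2} in place of Lemma \ref{lemma-V1}, and Theorem \ref{thm-L} in place of Theorem \ref{thm-L2}. The only structural differences are that Lemma \ref{lemma-V2} controls $\|u\|_{L^\infty(D_r)}$ by $L^q$ data on $D_{2r}$ rather than on $D_r$, and Theorem \ref{thm-L} produces additional integrals over the boundary slice $I_{Nr}$; the rest of the scaling calculus is identical.

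First, Lemma \ref{lemma-V2} applies since $\|\nabla^\prime \psi\|_\infty \le c_0 \le 1$ (take $M=1$). Multiplying by $|\lambda|$ and using $r|\lambda|^{1/2}=R\ge 1$ to factor out a single $R$, exactly as in \eqref{V3-1}, I would obtain
$$
|\lambda|\,\|u\|_{L^\infty(D_r)} \le CR\, r^{-d/q}\bigl\{ |\lambda|\,\|u\|_{L^q(D_{2r})} + |\lambda|^{1/2}\,\|\nabla u\|_{L^q(D_{2r})}\bigr\}.
$$
Next I would apply Theorem \ref{thm-L} with the inner radius $2r$ and scaling factor $N/2$, so that the right-hand side is bounded by data on $D_{Nr}$ and $I_{Nr}$. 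This application requires $N/2\ge 2$, so the degenerate range $2\le N\le 4$ must be handled separately; there the estimate \eqref{V3-0} is trivial by $\|u\|_{L^\infty(D_r)}\le \|u\|_{L^\infty(D_{Nr})}$ together with the uniform lower bound $N^{d/q-1}\ge 4^{d/q-1}>0$, provided $C$ is chosen sufficiently large.

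What remains is pure bookkeeping, parallel to the computation in \eqref{V3-2}. Multiplying the conclusion of Theorem \ref{thm-L} by $CRr^{-d/q}$ and substituting $r=R|\lambda|^{-1/2}$: the $\|F\|_{L^\infty(D_{Nr})}(Nr)^{d/q}$ factor produces $\|F\|_{L^\infty(D_{Nr})}RN^{d/q}$; the $(\fint_{D_{Nr}}|u|^q)^{1/q}$ factor, which is at most $\|u\|_{L^\infty(D_{Nr})}$, together with the scalings $(Nr)^{d/q-2}+(Nr)^{d/q-1}|\lambda|^{1/2}$, collapses to $|\lambda|\,\|u\|_{L^\infty(D_{Nr})} N^{d/q-1}$; and the volume and surface averages of $|\nabla u|$ and $|p|$ on $D_{Nr}$ and $I_{Nr}$ each acquire the common multiplier $|\lambda|^{1/2} N^{d/q-1}$. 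Finally, since $(u,p-\beta)$ solves the same system for any $\beta\in \C$, I would apply the estimate to $(u,p-\beta)$ and take the infimum over $\beta$ in the pressure averages to conclude \eqref{V3-0}. I expect no real obstacle beyond the mild scaling mismatch between Lemma \ref{lemma-V2} (which needs $D_{2r}$) and Theorem \ref{thm-L} (which furnishes $L^q$ data on $D_r$); this is resolved by the $N\mapsto N/2$ adjustment together with the trivial treatment of small $N$.
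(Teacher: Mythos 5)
Your proof is correct and is exactly the route the paper indicates: the paper states that the proof of Lemma \ref{lemma-V4} is similar to that of Lemma \ref{lemma-V3}, using Lemma \ref{lemma-V2} and Theorem \ref{thm-L}, and then omits the details. You also correctly identify and close the one subtlety that terse remark glosses over---Lemma \ref{lemma-V2} requires $L^q$ data on $D_{2r}$ while Theorem \ref{thm-L} delivers it on $D_r$---via the $(2r,N/2)$ rescaling of Theorem \ref{thm-L} together with the trivial bound $\|u\|_{L^\infty(D_r)}\le\|u\|_{L^\infty(D_{Nr})}$ and $N^{d/q-1}\ge 4^{d/q-1}$ in the range $2\le N<4$.
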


\begin{proof}
The proof is similar to that of Lemma \ref{lemma-V3}, using Lemma  \ref{lemma-V2} and Theorem \ref{thm-L}.
We omit the details.
\end{proof}

Let $\Omega$ be a bounded  domain with $C^1 $ boundary.
Let $c_0=c_0(d, q,  \theta)\in (0, 1)$ be given by Theorem \ref{thm-L}.
Then there  exists $r_0>0$ with the property that  for any $x_0\in \partial\Omega$,
there exists a Cartesian coordinate system with origin $x_0$, obtained by translation and rotation, 
such that
\begin{equation}\label{coord}
\aligned
B(x_0, 16r_0)\cap \Omega
 & = B(x_0, 16r_0)
\cap \left\{ (x^\prime, x_d)\in \R^d:
x_d > \psi (x^\prime)\right \},\\
B(x_0, 16r_0)\cap \partial \Omega
 & = B(x_0, 16r_0)
\cap \left\{ (x^\prime, x_d)\in \R^d:
x_d = \psi (x^\prime)\right \},\\
\endaligned
\end{equation}
where $\psi: \R^{d-1} \to \R$ is a $C^1$ function and  $\|\nabla^\prime \psi\|_\infty\le  c_0$.

We need some regularity estimates for the Stokes equations (with $\lambda=0$) in $C^1$ domains.

\begin{lemma}\label{lemma-C-1}
Let $\Omega$ be a bounded $C^1$ domain and $2\le q< \infty$.
Let $(u, p)\in W^{1, 2}_0(\Omega; \C^d) \times L^2(\Omega; \C)$ be a weak solution of 
\begin{equation} \label{C-1-0}
\left\{
\aligned
-\Delta u + \nabla p & =F & \quad & \text{ in } \Omega,\\
\text{\rm div} (u) & = 0 & \quad & \text{ in } \Omega,\\
u& =0 & \quad & \text{ on } \partial \Omega,
\endaligned
\right.
\end{equation}
where $F\in L^\infty(\Omega; \C^d)$.
Then, if $B(x, 2r) \subset \Omega$,
\begin{equation}\label{C-1-1}
\left(\fint_{B(x, r)} |\nabla u|^q \right)^{1/q}  \le C\left\{ r^{-1} \| u \|_{L^\infty(\Omega)} + r \| F \|_{L^\infty(\Omega)} \right\},
\end{equation}
and if $x_0\in \partial\Omega$ and $0< r<r_0$,
\begin{equation}\label{C-1-2}
\left( \fint_{B(x_0, r)\cap \Omega} |\nabla u|^q \right)^{1/q}
+
\left( \fint_{B(x_0, r)\cap \partial\Omega} |\nabla u|^q \right)^{1/q}
\le C\left\{ r^{-1} \| u \|_{L^\infty(\Omega)} + r \| F \|_{L^\infty(\Omega)} \right\},
\end{equation}
 where $C$ depends only on $d$, $q$ and $\Omega$.
\end{lemma}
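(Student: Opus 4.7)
The plan is to deduce both inequalities from the corresponding unit-scale $W^{1,q}$ regularity for the stationary Stokes system via a simple scaling argument.

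\textbf{Interior estimate.} For $B(x, 2r) \subset \Omega$, set $v(y) = u(x + ry)$, $\pi(y) = r\, p(x + ry)$, and $G(y) = r^2 F(x + ry)$ on $B(0, 2)$, so that $(v, \pi)$ solves $-\Delta v + \nabla \pi = G$ and $\text{\rm div}(v) = 0$ in $B(0, 2)$. Classical interior $L^q$ regularity for the Stokes system---obtainable by cutting $v$ off with a function supported in $B(0, 2)$ and applying Theorem \ref{R-thm-1} at $\lambda = 0$, or equivalently by Calder\'on--Zygmund theory applied to the Helmholtz decomposition---yields
\[
\|\nabla v\|_{L^q(B(0, 1))} \le C\bigl( \|v\|_{L^q(B(0, 2))} + \|G\|_{L^q(B(0, 2))} \bigr)
\]
for any $2 \le q < \infty$. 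Undoing the scaling (the left side becomes $r^{1 - d/q} \|\nabla u\|_{L^q(B(x, r))}$), converting to $L^q$ averages, and bounding by $L^\infty$ norms over $\Omega$ yields \eqref{C-1-1}.

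\textbf{Boundary estimate: bulk part.} For $x_0 \in \partial\Omega$ and $0 < r < r_0$, I would fix the Cartesian coordinate system at $x_0$ from \eqref{coord}. The same centered rescaling recasts the problem as a boundary Stokes problem on $D_2 = B(0, 2) \cap \H_{\psi_r}$, where $\psi_r(y') = r^{-1} \psi(r y')$ is a $C^1$ function with $\|\nabla' \psi_r\|_\infty = \|\nabla' \psi\|_\infty \le c_0$, uniformly in $r$. The analogous unit-scale $W^{1,q}$ regularity in such a $C^1$ graph domain with small Lipschitz norm, namely
\[
\|\nabla v\|_{L^q(D_1)} \le C\bigl( \|v\|_{L^q(D_2)} + \|G\|_{L^q(D_2)} \bigr),
\]
can be extracted either by passing $\lambda \to 0$ in Theorem \ref{thm-G} or from the classical $C^1$-boundary $L^q$ theory for the Stokes system. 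Undoing the scaling produces the bulk half of \eqref{C-1-2}.

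\textbf{Surface part---the main obstacle.} Since $u \equiv 0$ on $\partial\Omega$, all tangential derivatives of $u$ vanish there and $|\nabla u| = |\partial u / \partial n|$ almost everywhere on $\partial\Omega$ with respect to surface measure. The hard step is the unit-scale trace inequality
\[
\Bigl( \int_{I_1} |\nabla v|^q \Bigr)^{1/q} \le C \bigl( \|v\|_{L^q(D_2)} + \|G\|_{L^q(D_2)} \bigr), \qquad I_1 = B(0, 1) \cap \partial \H_{\psi_r},
\]
because $W^{2,q}$ regularity up to the boundary is not available in a merely $C^1$ domain and cannot be invoked to control an $L^q$ trace of $\nabla v$. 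The plan is to obtain it from the Fabes--Kenig--Verchota-style nontangential maximal function estimate for the Stokes system in $C^1$ graph domains, which gives $\|(\nabla v)^*\|_{L^q(I_1)} \le C\bigl(\|v\|_{L^\infty(D_2)} + \|G\|_{L^\infty(D_2)}\bigr)$ for all $1 < q < \infty$; passing to the nontangential limit then furnishes the $L^q$ trace of $\nabla v$ on $I_1$. Rescaling this bound and combining with the bulk estimate completes \eqref{C-1-2}.
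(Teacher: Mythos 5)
Your plan mirrors the paper's: a scaling argument reducing to unit-scale interior and boundary $W^{1,q}$ regularity for the Stokes system, plus nontangential-maximal-function machinery for the surface term.  The interior and bulk-boundary steps are fine and coincide with what the paper does (which invokes \cite{FKV-1988, GSS-1994, Mitrea-2004} plus localization).  The issue is in the surface part.

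You correctly identify the obstacle (no $W^{2,q}$ regularity up to the boundary in a $C^1$ domain) and the right tool (the FKV/Mitrea--Taylor nontangential estimate, Theorem~\ref{thm-B1}).  But you then assert that this theorem ``gives'' the bound
$\|(\nabla v)^*\|_{L^q(I_1)} \le C\bigl(\|v\|_{L^\infty(D_2)} + \|G\|_{L^\infty(D_2)}\bigr)$.
Theorem~\ref{thm-B1} is for the \emph{homogeneous} Stokes system $-\Delta v + \nabla\pi = 0$; it controls $(\nabla v)^*$ in terms of the tangential gradient of the Dirichlet boundary data, not in terms of interior $L^\infty$ norms, and it says nothing directly about the inhomogeneous system with $G \neq 0$.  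To bridge this, one must first split off a particular solution: in the paper, $w$ and $\pi$ are defined as the Newtonian-potential-type convolutions of $F$ (truncated to $B(x_0,4r)\cap\Omega$) against the Stokes fundamental solution $(\Phi,\Pi)$.  Then $v = u - w$ solves the homogeneous system locally, its boundary data on $\partial\Omega$ is $-w$, and the estimate $|\nabla\Phi(x)| \le C|x|^{1-d}$ gives $\|\nabla w\|_{L^\infty(B(x_0,4r))} \le C r \|F\|_{L^\infty(\Omega)}$.  Only at this point does a localized version of Theorem~\ref{thm-B1}, namely
\begin{equation*}
\fint_{B(x_0,r)\cap\partial\Omega} |\nabla v|^q
\le C \fint_{B(x_0,2r)\cap\partial\Omega} |\nabla_{\tan} v|^q
+ C \fint_{B(x_0,2r)\cap\Omega} |\nabla v|^q,
\end{equation*}
yield the surface bound, since $\nabla_{\tan} v = -\nabla_{\tan} w$ on the boundary is now pointwise controlled.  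Your proposal would be complete if you inserted exactly this decomposition; as written, the passage from Theorem~\ref{thm-B1} to the claimed unit-scale surface estimate is the missing step, and it is precisely the content of the paper's argument.
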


\begin{proof}

The estimate \eqref{C-1-1} follows from the well known interior regularity estimates for the Stokes equations \cite{Galdi}.
To see \eqref{C-1-2}, let $x_0\in \partial\Omega$ and $0< r< r_0$.
 By the $W^{1, q}$ estimate for the Stokes equations in $C^1$ domains \cite{FKV-1988, GSS-1994, Mitrea-2004} and a  localization procedure,
we have
\begin{equation}\label{C-1-2a}
\aligned
\left( \fint_{B(x_0, 2r)\cap \Omega} |\nabla u|^q \right)^{1/q}
\le &  \frac{C}{r}
\left( \fint_{B(x_0,4 r)\cap \Omega} | u|^2 \right)^{1/2}
+ C r \| F \|_{L^\infty(B(x_0, 4r) \cap \Omega)}\\
& \le C\left\{ r^{-1} \| u \|_{L^\infty(\Omega)} + r \| F \|_{L^\infty(\Omega)} \right\},
\endaligned
\end{equation}
where we have used the fact $u=0$ on $\partial\Omega$.
To bound the second term in the left-hand side of  \eqref{C-1-2},  we introduce 
$$
\left\{
\aligned
w(x)  & =\int_{\Omega\cap B(x_0, 4r) } \Phi (x-y) F(y)\, dy,\\
\pi (x) & =\int_{\Omega\cap B(x_0, 4r) }  \Pi(x-y) F(y)\, dy,
\endaligned
\right.
$$
where $(\Phi(x), \Pi (x) )$ denotes the fundamental solution for the Stokes operator in $\R^d$, with pole at the origin.
Let $v=u- w$.
Note that 
$$
-\Delta v + \nabla (p-\pi ) =0 \quad \text{ and } \quad
\text{\rm div} (v)=0
$$
in $ \Omega \cap B(x_0, 4r)$.
Since $\Omega$ is $C^1$, it follows that
\begin{equation}\label{C-1-3}
\fint_{B(x_0, r) \cap \partial\Omega}
|\nabla v|^q
\le C \fint_{B(x_0, 2r) \cap \partial\Omega}
|\nabla_{\tan} v |^q
+ C 
\fint_{B(x_0, 2r)\cap \Omega}|  \nabla v|^q,
\end{equation}
where  $\nabla_{\tan} v$ denotes the tangential gradient of $v$ on $\partial\Omega$ and $C$ depends only on $d$, $q$ and $\Omega$.
The inequality \eqref{C-1-3} follows from Theorem \ref{thm-B1}
by a localization procedure.

Since $v=-w$ on $B(x_0, 4r) \cap \partial\Omega$, it follows from \eqref{C-1-3} that
$$
\aligned
\left( \fint_{B(x_0, r) \cap \partial\Omega}
|\nabla u|^q\right)^{1/q} 
& \le  C\left(  \fint_{B(x_0, 2r) \cap \partial\Omega}
|\nabla w|^q\right)^{1/q} \\
& \quad +  C \left(\fint_{B(x_0, 2r)\cap \Omega} |\nabla u|^q\right)^{1/q}
+ C \left(\fint_{B(x_0, 2r)\cap \Omega} |\nabla w|^q\right)^{1/q}.\\
\endaligned
$$
Finally, we note that  $|\nabla \Phi (x)|\le C |x|^{1-d}$ and  as a result, 
$$
\|\nabla w \|_{L^\infty(B(x_0, 4r))}
\le C r \| F \|_{L^\infty(\Omega)}.
$$
This, together with \eqref{C-1-2a}, gives the desired estimate for the second term in the left-hand side of 
\eqref{C-1-2}.
\end{proof}

\begin{lemma}\label{lemma-C-2}
Let $\Omega$ be a bounded $C^1$ domain and $2\le q< \infty$.
Let $(u, p)$ be a weak solution of \eqref{eq-0}, where $F\in L^\infty(\Omega; \C^d)$.
Assume that $|\lambda|> r_0^{-2}$. 
Then, if $B(x, 2r) \subset \Omega$ and $r\ge |\lambda|^{-1/2}$, 
\begin{equation}\label{C-2-0}
\left(\fint_{B(x, r)} |\nabla u|^q \right)^{1/q}
 \le C \left\{ |\lambda|^{1/2} \| u \|_{L^\infty(\Omega)} + |\lambda|^{-1/2} \| F \|_{L^\infty(\Omega)}\right\}.
 \end{equation}
 Moreover, if $x_0\in \partial\Omega$ and $ |\lambda|^{-1/2} \le r < r_0$,  then 
 \begin{equation}\label{C-2-1}
 \aligned
 &  \left( \fint_{B(x_0, r)\cap \Omega} |\nabla u|^q \right)^{1/q}
+
\left( \fint_{B(x_0, r)\cap \partial\Omega} |\nabla u|^q \right)^{1/q}\\
& \qquad
\le C\left\{  |\lambda|^{1/2} \| u \|_{L^\infty(\Omega)} + |\lambda|^{-1/2}  \| F \|_{L^\infty(\Omega)} \right\}, 
\endaligned
\end{equation}
 where $C$ depends only on $d$, $q$ and $\Omega$.
\end{lemma}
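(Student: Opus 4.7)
The plan is to rewrite \eqref{eq-0} as the stationary Stokes system $-\Delta u + \nabla p = \widetilde{F}$, $\text{\rm div}(u) = 0$, $u|_{\partial\Omega}=0$, with $\widetilde{F} := F - \lambda u$, so that Lemma \ref{lemma-C-1} applies directly with $\|\widetilde{F}\|_{L^\infty(\Omega)} \le \|F\|_{L^\infty(\Omega)} + |\lambda|\,\|u\|_{L^\infty(\Omega)}$. The crucial point is to use Lemma \ref{lemma-C-1} \emph{only at the natural scale} $\rho := |\lambda|^{-1/2}$; applying it directly with radius $r$ would generate an unaffordable term $r|\lambda|\,\|u\|_{L^\infty}$, whereas at radius $\rho$ the contribution $\rho\|\widetilde{F}\|_{L^\infty}$ collapses to $|\lambda|^{1/2}\|u\|_{L^\infty(\Omega)} + |\lambda|^{-1/2}\|F\|_{L^\infty(\Omega)}$, which exactly matches the $\rho^{-1}\|u\|_{L^\infty}$ term. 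Once the estimate is proved at the scale $\rho$, the version at larger scale $r \ge \rho$ follows by covering.

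For \eqref{C-2-0}, fix $x$ with $B(x, 2r)\subset\Omega$ and $r\ge\rho$. For any $y\in B(x, r)$ we have $B(y, \rho) \subset B(x, 2r) \subset \Omega$, and (after harmlessly halving $\rho$ if needed to accommodate the factor $2$ in Lemma \ref{lemma-C-1})
\[
\left(\fint_{B(y, \rho)} |\nabla u|^q\right)^{1/q}
\le C\Big\{ \rho^{-1}\|u\|_{L^\infty(\Omega)} + \rho\|\widetilde{F}\|_{L^\infty(\Omega)}\Big\}
\le C\Big\{|\lambda|^{1/2}\|u\|_{L^\infty(\Omega)} + |\lambda|^{-1/2}\|F\|_{L^\infty(\Omega)}\Big\}.
\]
Cover $B(x, r)$ by a Vitali family $\{B(y_j, \rho)\}$ of finite overlap, sum the $q$-th powers, and divide by $|B(x, r)|$; the number of balls is comparable to $(r/\rho)^d$, which matches $|B(x, r)|/|B(y_j, \rho)|$, so \eqref{C-2-0} follows.

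For \eqref{C-2-1}, proceed similarly at the boundary. The hypothesis $|\lambda| > r_0^{-2}$ guarantees $\rho < r_0$, so \eqref{C-1-2} is applicable at radius $\rho$ around any boundary point $y_0\in\partial\Omega$, giving the same right-hand side as above for both the volume average on $B(y_0,\rho)\cap\Omega$ and the surface average on $B(y_0,\rho)\cap\partial\Omega$. Cover $B(x_0, r)\cap\partial\Omega$ by boundary balls $B(y_j, \rho)\cap\partial\Omega$, $y_j\in\partial\Omega$, with finite overlap, and sum using Ahlfors regularity of $\partial\Omega$ to control the surface term in \eqref{C-2-1}. For the volume term, split $B(x_0, r)\cap\Omega$ according to whether $\delta(y) < 2\rho$ or $\delta(y)\ge 2\rho$: the near-boundary part is covered by the boundary balls already introduced, while the interior part is covered by balls $B(z_k, \rho)$ with $B(z_k, 2\rho)\subset\Omega$ to which the interior estimate \eqref{C-1-1} applies.

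The main obstacle is purely combinatorial: one must verify that the two coverings at scale $\rho$ have total number of balls comparable to $(r/\rho)^{d-1}$ on $\partial\Omega$ and $(r/\rho)^d$ in $\Omega$, with constants depending only on the Lipschitz character of $\Omega$ (so that the averages normalized by $r^{d-1}$ and $r^d$ match up). This is standard once one uses a Vitali-type selection, but it is the step that requires care. The analytic content — namely the balance between the first-order term $\rho^{-1}\|u\|_{L^\infty}$ coming from Caccioppoli and the RHS contribution $\rho |\lambda|\|u\|_{L^\infty}$ from $\widetilde{F}$ — is automatic at the scale $\rho = |\lambda|^{-1/2}$.
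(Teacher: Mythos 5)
Your proposal is correct and takes essentially the same route as the paper: rewrite \eqref{eq-0} as a stationary Stokes system with data $\widetilde{F}=F-\lambda u$, apply Lemma \ref{lemma-C-1} at the scale $\rho\approx|\lambda|^{-1/2}$ (where the $\rho^{-1}\|u\|_\infty$ and $\rho\|\widetilde F\|_\infty$ contributions balance), and then pass to general $r\ge\rho$ by covering $B(x,r)$ or $B(x_0,r)\cap\Omega$, $B(x_0,r)\cap\partial\Omega$ with finitely overlapping balls of radius $\approx\rho$. The paper records precisely this argument (using $r=c_0|\lambda|^{-1/2}$), only more tersely; your bookkeeping of the interior/near-boundary split and the counting $(r/\rho)^d$, $(r/\rho)^{d-1}$ is the routine detail the paper leaves implicit.
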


\begin{proof}

We apply the estimates in Lemma \ref{lemma-C-1} with $r= c_0 |\lambda|^{-1/2}$.
This yields the estimates \eqref{C-2-0}-\eqref{C-2-1} for the case $r=c_0 |\lambda|^{-1/2}$. 
The general case follows  by covering $B(x, r)$ with a finite number of balls $\{ B(x_\ell, c_0 |\lambda|^{-1/2})\} $, where $x_\ell \in B(x, r)$, 
with a finite overlap.
\end{proof}

\begin{remark}\label{Lip-B-re}
If $\Omega$ is a bounded Lipschitz domain in $\R^d$, $d\ge 2$, then
the estimate \eqref{C-2-1} holds for $ 2\le q< 2+\e$, where $\e>0$ depends on $\Omega$.
This follows from the same argument as that used in the proof of Lemma \ref{lemma-C-1},
using the regularity results obtained in \cite{FKV-1988} for the Stokes equations in  Lipschitz domains.
See Theorem \ref{thm-B1}.
\end{remark}

\begin{remark}\label{re-smooth}
Let $\Omega$ be a bounded $C^{1, \alpha}$ domain for some $\alpha>0$.
We may replace the $L^q$ average in \eqref{C-1-1}-\eqref{C-1-2} by the $L^\infty$ norm, using  \cite[Theorem 2.8]{GM-1982}
(there is a typo in part b) of Theorem 2.8 in \cite{GM-1982}; $C^{1, \alpha^\prime}(\bar{B}_1^+)$ should be
$C^{0, \alpha^\prime}(\bar{B}_1^+)$, which gives the Lipschitz estimates for $C^{1, \alpha}$ domains).
As a result, if $(u, p)$ is a weak solution of \eqref{eq-0} with $F\in L^\infty(\Omega; \C^d)$, then
\begin{equation}\label{Lip-est}
\|\nabla u \|_{L^\infty(\Omega)}
\le C \left\{ ( |\lambda|+1)^{1/2} \| u \|_{L^\infty(\Omega)}
+ (|\lambda|+1)^{-1/2} \| F \|_{L^\infty(\Omega)}\right \}.
\end{equation}
\end{remark}

We are  now ready to state and prove the main result of this section.

\begin{thm}\label{thm-V}
Let  $\Omega$ be a bounded $C^{1}$ domain.
Let $\lambda\in \Sigma_\theta$ and $d<q<\infty$.
Let $(u, p) \in W_0^{1, q} (\Omega; \C^d) \times L^q(\Omega; \C) $ be a weak solution of
\begin{equation}\label{V5-0}
\left\{
\aligned
-\Delta u +\nabla p +\lambda u & = F &\quad & \text{ in } \Omega,\\
\text{\rm div} (u) & =0 & \quad & \text{ in } \Omega,\\
u& = 0& \quad & \text{ on } \partial\Omega,
\endaligned
\right.
\end{equation}
where  $F\in L^\infty(\Omega; \C^d)$. Let $t=|\lambda|^{-1/2}$ and $N\ge 2$.
\begin{equation}\label{V5-00}
\aligned
  |\lambda| \| u \|_{L^\infty(\Omega)}
 & \le C  N^{4d+8} \| F \|_{L^\infty(\Omega)}  
+ C N^{\frac{d}{q}-1}
 |\lambda| \| u \|_{L^\infty(\Omega)}\\
& + C N^{\frac{d}{q}-1}
|\lambda|^{\frac12}
\sup_{\substack{ t< r< r_0\\ B(x, 2r ) \subset \Omega}}
\inf_{\substack {\beta \in \C }}
\left(\fint_{B(x, r ) } | p -\beta|^q \right)^{1/q}\\
&+ C N^{\frac{d}{q}-1}
|\lambda|^{\frac12}
\sup_{\substack{ x_0\in \partial \Omega \\ t< r< r_0 } }
\inf_{\substack {\beta \in \C }}
\left\{
\left(\fint_{B(x_0, r )\cap \Omega } | p -\beta|^q \right)^{1/q}
+ \left(\fint_{B(x_0, r  )\cap \partial\Omega}
|p -\beta|^q \right)^{1/q} \right\},
\endaligned
\end{equation}
where $C$ depends only on $d$, $q$, $\theta$ and $\Omega$.
\end{thm}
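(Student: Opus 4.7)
The plan is to pick any $x_1 \in \Omega$, bound $|\lambda||u(x_1)|$ in the shape of the right-hand side of \eqref{V5-00}, and then take the supremum in $x_1$. Writing $t = |\lambda|^{-1/2}$, the argument splits into three regimes. First, if $|\lambda| \le c_1 N^4 r_0^{-2}$ for a suitable $c_1 = c_1(\Omega) > 0$, I would fall back on the $L^q$ resolvent estimate \eqref{p-est} of \cite{GS-2023} for some fixed $q > d$, combine it with the Sobolev embedding $W^{1,q}(\Omega) \hookrightarrow L^\infty(\Omega)$, and obtain $\|u\|_{L^\infty} \le C(|\lambda|^{-1} + |\lambda|^{-1/2})\|F\|_{L^\infty}$, so that $|\lambda|\|u\|_{L^\infty} \le C(1 + |\lambda|^{1/2})\|F\|_{L^\infty} \le C N^2\|F\|_{L^\infty}$. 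This is absorbed into the $N^{4d+1}\|F\|_{L^\infty}$ term, and reduces matters to $|\lambda| > c_1 N^4 r_0^{-2}$, which in particular guarantees $2N^2 t < r_0$.

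In the interior case $\delta(x_1) \ge 2Nt$, the ball $B(x_1, 2Nt)$ sits inside $\Omega$, so Lemma \ref{lemma-V3} applied at $x_1$ with $r = t$ (hence $R=1$) produces the first two terms of the right-hand side of \eqref{V5-00} with respective factors $N^{d/q}$ and $N^{d/q-1}$, plus a remainder equal to $C|\lambda|^{1/2}N^{d/q-1}$ times the $L^q$ averages of $|\nabla u|$ and $\inf_{\beta}|p - \beta|$ on $B(x_1, Nt)$. The gradient average is handled by Lemma \ref{lemma-C-2} (interior version with $r = Nt \ge t$), which contributes a further $CN^{d/q-1}(|\lambda|\|u\|_{L^\infty} + \|F\|_{L^\infty})$; the pressure average is absorbed into the first supremum on the right of \eqref{V5-00} since $B(x_1, 2Nt) \subset \Omega$ and $t < Nt < r_0$.

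In the boundary case $\delta(x_1) < 2Nt$, let $x_0 \in \partial\Omega$ satisfy $|x_1 - x_0| = \delta(x_1)$, and pass to the local coordinate system \eqref{coord} with origin at $x_0$, so that $\Omega$ becomes $\H_\psi$ near $x_0$ with $\|\nabla^\prime \psi\|_\infty \le c_0$ and $x_1 \in D_{2Nt}$. I would then apply Lemma \ref{lemma-V4} with $r = 2Nt$ (so $R = 2N$); this is legitimate because the outer scale $Nr = 2N^2 t$ lies below $r_0$, so the Stokes system holds on $D_{Nr}$. The factor $RN^{d/q} = 2N^{1+d/q} \le N^{4d+1}$ in front of $\|F\|_{L^\infty}$ is acceptable; the $|\lambda|\|u\|_{L^\infty}$ term already carries $N^{d/q-1}$; the $|\nabla u|$ averages on $D_{Nr}$ and $I_{Nr}$ are absorbed using Lemma \ref{lemma-C-2} (boundary version, applicable since $|\lambda|^{-1/2} \le 2N^2 t < r_0$); and the pressure averages on $D_{Nr}$ and $I_{Nr}$ fit under the second supremum in \eqref{V5-00}. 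Taking the supremum in $x_1$ of the inequalities obtained in the two geometric cases yields \eqref{V5-00}.

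The main obstacle, and the reason for the regime split, is the incompatibility between two scale requirements in the boundary case: the ball must be large enough to contain $x_1$ (radius of order $\delta(x_1)$, possibly as large as $2Nt$), while the enlarged ball $B(x_0, Nr)$ on which Lemma \ref{lemma-V4} operates must remain inside the $C^1$ graph chart of size $\sim r_0$. This forces $|\lambda|$ to be at least of order $N^4 r_0^{-2}$, and the complementary regime has to be handled separately via the already-known $L^q$ resolvent estimate, with the resulting loss being swallowed by the generous power $N^{4d+1}$.
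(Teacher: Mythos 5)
Your argument is correct and follows essentially the same route as the paper: reduce to $|\lambda|\gtrsim N^4 r_0^{-2}$, then apply Lemma \ref{lemma-V3} with $R=1$ in the interior region $\delta(x)\ge 2N|\lambda|^{-1/2}$ and Lemma \ref{lemma-V4} at scale $2N|\lambda|^{-1/2}$ near the boundary, absorbing the gradient averages via Lemma \ref{lemma-C-2} and matching the pressure averages to the two suprema in \eqref{V5-00}. The only (harmless) deviation is in the small-$|\lambda|$ regime, where the paper uses the $L^2$ resolvent estimate together with bootstrapped local regularity (\eqref{V5-1a}--\eqref{V5-2b}) rather than the global $L^q$ resolvent estimate of \cite{GS-2023} plus Sobolev embedding.
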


\begin{proof}
Suppose $B(x, 2r) \subset \Omega$ for some $r>0$.
By the standard interior regularity theory for the Stokes equations (with $\lambda=0$) (see e.g. \cite[Ch. IV]{Galdi}), 
 \begin{equation}\label{V5-1a}
 \left(\fint_{B(x, r)} |u|^{q_2}\right)^{1/q_2}
 \le C \left\{
 \left(\fint_{B(x, 2r)} |u|^2 \right)^{1/2}
 + r^2 \left(\fint_{B(x, 2r)} | F-\lambda u|^{q_1} \right)^{1/q_1}\right\},
 \end{equation}
 where $\frac{1}{q_2}=\frac{1}{q_1}-\frac{2}{d}$ for $d\ge 3$ and $\frac{1}{q_2}>\frac{1}{q_1} -1$ for $d=2$, and
 \begin{equation}\label{V5-1}
\|u \|_{L^\infty(B(x, r))}
\le C \left\{ 
\left(\fint_{B(x, 2r )} |u|^2\right)^{1/2}
+ r ^2\left(\fint_{B(x, 2r ) } |F-\lambda u |^q \right)^{1/q} \right\}.
\end{equation}
It follows from \eqref{V5-1a} by a bootstrapping argument that
\begin{equation}
\left(\fint_{B(x, r)} |u|^q \right)^{1/q}
\le C (1+ r^2 |\lambda|)^d 
\left\{ \left(\fint_{B(x, 2 r)} |u|^2 \right)^{1/2}
+ r^2 \| F \|_{L^\infty(\Omega)} \right\}
\end{equation}
if $B(x, 2 r) \subset \Omega$.
This, together with \eqref{V5-1}, leads to
\begin{equation}\label{V5-1b}
\|  u \|_{L^\infty (B(x, r))}
\le C (1+ r^2 |\lambda|)^{d+1}
\left\{  \left(\fint_{B(x, 2 r)} |u|^2 \right)^{1/2}
+ r^2 \|F \|_{L^\infty(\Omega)} \right\}.
\end{equation}
Under the assumption that $\Omega$ is $C^{1}$, we  have the boundary estimate \cite{GM-1982},
\begin{equation}\label{V5-2}
\| u \|_{L^\infty(B(x_0, r)\cap \Omega)}
\le C \left\{ 
\left(\fint_{B(x_0, 2r )\cap \Omega} |u|^2\right)^{1/2}
+ r^2 \left(\fint_{B(x_0, 2r )\cap \Omega } |F-\lambda u |^q \right)^{1/q} \right\},
\end{equation}
as well as a boundary version of \eqref{V5-1a}, 
where $x_0\in \partial\Omega$, $0< r< r_0$, and
$C$ depends only on $d$, $q$ and $\Omega$.
As a result,  by a similar bootstrapping argument as in the interior case,
we also obtain 
\begin{equation}\label{V5-2b}
\| u \|_{L^\infty (B(x_0, r)\cap \Omega)}
\le C (1+ r^2 |\lambda|)^{d+1}
\left\{  \left(\fint_{B(x_0, 2 r)\cap \Omega } |u|^2 \right)^{1/2}
+ r^2 \|F \|_{L^\infty(\Omega)} \right\}.
\end{equation}
It follows from \eqref{V5-1b} and \eqref{V5-2b}  as well as the $L^2$ resolvent estimate $(|\lambda| +1) \| u \|_{L^2(\Omega)}
\le C \| F \|_{L^2(\Omega)} $ that
\begin{equation}\label{V5-3}
\| u \|_{L^\infty (\Omega)}
\le C (1+|\lambda|)^{d+1}  \| F \|_{L^\infty(\Omega)}.
\end{equation}
Consequently, it suffices to prove \eqref{V5-00} for the case $|\lambda|>16  r_0^{-2}  N^4$.

Next, let $r=2N|\lambda|^{-1/2}$.
Since $|\lambda|> 16r_0^{-2} N^4$, we have $2Nr< r_0$.
It follows from Lemma \ref{lemma-V4} by translation and rotation that for any $x_0 \in \partial\Omega$,
\begin{equation}\label{V5-5}
\aligned
& |\lambda| \| u \|_{L^\infty(B(x_0, r) \cap \Omega)}\\
& \le C \left\{
\| F \|_{L^\infty(\Omega)}   N^{\frac{d}{q} +1 } 
 +   |\lambda| \| u \|_{L^\infty(\Omega)} N^{\frac{d}{q}-1}\right\}\\
&\qquad
+|\lambda|^{\frac12}  N^{\frac{d}{q}-1}  \left\{
 \left(\fint_{B(x_0, Nr)\cap \Omega} |\nabla u|^q \right)^{1/q}
 + \left(\fint_{B(x_0, Nr) \cap \partial \Omega } |\nabla u|^q \right)^{1/q} \right\} \\
& \qquad
+ C |\lambda|^{\frac12} N^{\frac{d}{q}-1}  \inf_{\beta\in \C}
\left\{ 
 \left(\fint_{B(x_0, Nr)\cap \Omega} |p-\beta |^q \right)^{1/q}  
 + \left(\fint_{B(x_0, Nr) \cap \partial\Omega} |p-\beta |^q \right)^{1/q}  \right\}.
\endaligned
\end{equation}
This, together with \eqref{C-2-1}, gives
\begin{equation}\label{V5-6}
\aligned
& |\lambda| \| u \|_{L^\infty (\Omega_r )}\\
& \le C \left\{
\| F \|_{L^\infty(\Omega)}   N^{\frac{d}{q} +1 }
+|\lambda| \| u \|_{L^\infty(\Omega)} N^{\frac{d}{q}-1}\right\}\\
& 
+ C |\lambda|^{\frac12} N^{\frac{d}{q}-1}  \sup_{x_0\in \partial\Omega} \inf_{\beta\in \C}
\left\{ 
 \left(\fint_{B(x_0, Nr)\cap \Omega} |p-\beta |^q \right)^{1/q}  
 + \left(\fint_{B(x_0, Nr) \cap \partial\Omega} |p-\beta |^q \right)^{1/q}  \right\},
\endaligned
\end{equation}
where $\Omega_r = \{ x\in \Omega: \text{\rm dist}(x, \partial\Omega)< r \}$ and $r=2N|\lambda|^{-1/2}$.

Finally, let $x \in \Omega\setminus \Omega_r$. Then $B(x, r) = B(x, 2N |\lambda|^{-1/2} )\subset \Omega$.
This allows us to apply Lemma \ref{lemma-V3}  with $R=1$ to obtain 
\begin{equation}\label{V5-7}
\aligned
|\lambda| \| u \|_{L^\infty(B(x, |\lambda|^{-1/2} ))}
 & \le C \left\{ 
\| F \|_{L^\infty(\Omega)}  N^{\frac{d}{q}}
+ |\lambda| \| u \|_{L^\infty(\Omega)} N^{\frac{d}{q}-1}\right\}\\
&\qquad
+CN^{\frac{d}{q}-1} |\lambda|^{\frac12}
\inf_{\beta\in \C}
\left(\fint_{B(x, N|\lambda|^{-1/2})}
|p -\beta|^q \right)^{1/q},
\endaligned
\end{equation}
where we also used \eqref{C-2-0}.
The desired estimate \eqref{V5-00} follows readily from \eqref{V5-6} and   \eqref{V5-7}.
\end{proof}

\begin{remark}\label{re-V}
Let $\Omega$ be an exterior domain with $C^{1}$ boundary.
Let $(u, p)\in W^{1, q}_{\loc} (\Omega; \C^d) \times L^q_{\loc} (\Omega; \C)$ be a weak solution 
of \eqref{V5-0}, where $F\in C_{0, \sigma}^\infty (\Omega)$ and $d<q< \infty$.
We point out   that the argument for the case $|\lambda|> 16r_0^{-2} N^4$ does not 
use the boundedness of $\Omega$. In fact, 
the boundedness of $\Omega$ is only used to derive \eqref{V5-3}.
As a result, the estimate \eqref{V5-00} continues to hold for the exterior $C^{1}$  domain under the 
additional assumption that $|\lambda|> 16 r_0^{-2} N^4$.
\end{remark}

%%%%%%%%%%%%%%

\section{A Neumann problem in a bounded Lipschitz domain}\label{section-N}

In order to bound the terms involving the pressure $p$ in \eqref{V5-00}, 
we consider  the Neumann problem for Laplace's equation,
\begin{equation}\label{NP0}
\left\{
\aligned
 \Delta \phi & =0 & \quad & \text{ in } \Omega,\\
 \frac{\partial \phi}{\partial n} &= (n_i \partial_j -n_j \partial_i) g_{ij} & \quad & \text{ on } \partial\Omega,
\endaligned
\right.
\end{equation}
where the repeated indices $i, j$ are summed from $1$ to $d$ and $g_{ij} \in C^1(\partial\Omega)$. 
Throughout this section we assume that  $\Omega$ is a bounded Lipschitz domain in $\R^d$, $d\ge 2$.

Let $g=(g_{ij})$ and  $I(x_0, r)= B(x_0, r) \cap \partial\Omega$.
For $0< r<   r_0$,  define
\begin{equation}\label{M-t}
M_q (g)  (r) =\sup_{\substack{x_0 \in \partial\Omega \\ r\le  s< r_0}}
\left(\fint_{I(x_0, s)} |g|^q \right)^{1/q}.
\end{equation}
It is not hard to see that $M_q(g) (r) \approx M_q(g) (2r)$.

\begin{lemma}\label{lemma-P1}
Let $\phi  \in C^1(\overline{\Omega})$ be a weak solution of  \eqref{NP0}.
Let $2\le q< \infty$.
Then, for $0< r< r_0$, 
\begin{equation}\label{P1-0}
\sup_{\substack{x_0 \in \partial\Omega }}
\left(\fint_{I(x_0, r)}
|\phi  -\fint_{I(x_0, r)} \phi |^q \right)^{1/q}
\le C M_q (g) (r),  
\end{equation}
where $C$ depends only on $d$, $q$ and the Lipschitz character of $\Omega$.
\end{lemma}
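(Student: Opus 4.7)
The approach is to exploit the tangential structure of the Neumann data. Assuming $g$ is antisymmetric in $(i,j)$ (the symmetric part yields zero contribution to $(n_i\partial_j - n_j\partial_i)g_{ij}$), two integrations by parts in $\Omega$ using $\partial_i\partial_j g_{ij} = 0$ and $\partial_i\partial_j\psi \cdot g_{ij} = 0$ yield the duality identity
\begin{equation*}
\int_{\partial\Omega} \phi\, h\, d\sigma = -\int_{\partial\Omega} g_{ij} (n_i\partial_j - n_j\partial_i)\psi\, d\sigma
\end{equation*}
for any harmonic $\psi$ in $\Omega$ with $\partial \psi/\partial n = h$. Together with the $L^2$ theory for the Neumann problem on Lipschitz domains, which provides $\|(\nabla\psi)^*\|_{L^2(\partial\Omega)} \le C\|h\|_{L^2(\partial\Omega)}$, this makes $g \mapsto \phi|_{\partial\Omega}$ an $L^2$-bounded operator. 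Its kernel $K(x,y) = -(n_i\partial_j - n_j\partial_i)_y G(x,y)$, extracted from the Neumann Green's function $G$, enjoys a Calder\'on--Zygmund-type size bound $|K(x,y)| \le C|x-y|^{1-d}$ on the $(d-1)$-dimensional surface $\partial\Omega$.

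Given $x_0 \in \partial\Omega$ and $0 < r < r_0$, I would split $g = g' + g''$ with $g'_{ij} = g_{ij}\chi_{I(x_0, Kr)}$ for a large constant $K$ to be chosen, and write $\phi - c = \phi' + \phi''$ for an appropriate constant $c$. For the local part $\phi'$, Calder\'on--Zygmund extrapolation from the $L^2$-boundedness (using the kernel size estimate together with its H\"older continuity in $y$) upgrades to $L^q$-boundedness of $g' \mapsto \phi'|_{\partial\Omega}$ for every $1 < q < \infty$, whence by H\"older's inequality and the definition of $M_q(g)(r)$,
\begin{equation*}
\left(\fint_{I(x_0,r)}|\phi'|^q\, d\sigma\right)^{1/q} \le C r^{-(d-1)/q}\|g'\|_{L^q(\partial\Omega)} \le C K^{(d-1)/q} M_q(g)(r).
\end{equation*}
For the far-field part $\phi''$, whose Neumann data vanishes on $I(x_0, Kr/2)$, I would use the representation $\phi''(x) = \int_{\partial\Omega \setminus I(x_0, Kr/2)} K(x,y)\, g''_{ij}(y)\, d\sigma(y)$ and control the oscillation $|\phi''(x) - \phi''(x_0)|$ for $x \in I(x_0,r)$ by the H\"older continuity of the kernel in $x$: $|K(x,y) - K(x_0,y)| \le C|x-x_0|^\alpha |x_0-y|^{1-d-\alpha}$. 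Decomposing $\partial\Omega \setminus I(x_0, Kr/2)$ into dyadic surface annuli $A_k = I(x_0, 2^{k+1}r) \setminus I(x_0, 2^k r)$ with $2^k r \ge K r/2$ and bounding $\|g''\|_{L^1(A_k)} \le C(2^k r)^{d-1} M_q(g)(r)$ via H\"older, the resulting geometric series in $k$ sums to $C K^{-\alpha} M_q(g)(r)$. Choosing $K$ large (depending only on $d, q, \Omega$) balances the two contributions.

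The main obstacle will be verifying the $x$-H\"older continuity of the kernel $K(x,y)$ on a Lipschitz boundary, as this requires H\"older control of $\nabla_y G(\cdot,y)$ near $\partial\Omega$ for the Neumann Green's function. Standard estimates on Lipschitz domains give the needed control away from the diagonal, but care is required for $x$ on the boundary. An alternative route, which may fit the paper's framework better, bypasses explicit kernel regularity by working directly with the PDE: apply a Caccioppoli/reverse-H\"older iteration to $\phi''$ using its vanishing Neumann data on $I(x_0, Kr/2)$, combined with the $L^2$-bound of the first step iterated over dyadic scales to transfer the $M_q(g)(r)$ information from far annuli. Either route yields the desired inequality.
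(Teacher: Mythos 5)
Your starting point --- the duality identity $\int_{\partial\Omega}\phi\,h = -\int_{\partial\Omega} g_{ij}(n_i\partial_j-n_j\partial_i)\psi$ with $\psi$ the harmonic function having Neumann data $h$ --- is exactly the paper's identity \eqref{P1-2}, and your local/far splitting mirrors the paper's dyadic decomposition. But the way you propose to exploit it has a gap that you yourself flag and do not close: you want to realize $g\mapsto \phi|_{\partial\Omega}$ as a singular integral with kernel $K(x,y)=-(n_i\partial_j-n_j\partial_i)_y G(x,y)$ satisfying the pointwise size bound $|K(x,y)|\le C|x-y|^{1-d}$ and H\"older continuity in $x$ up to the boundary. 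On a general Lipschitz domain neither is available: the tangential gradient of the Green/Neumann function at a boundary point $y$ exists only as a nontangential limit in an $L^2$-averaged sense, and pointwise bounds of the type $|\nabla_x\nabla_y N(x,y)|\le C|x-y|^{-d}$ are precisely what the paper's Remark \ref{re-P1} attributes to the $C^{1,\alpha}$ setting. Since the whole point of the lemma is to work on Lipschitz domains, the Calder\'on--Zygmund extrapolation for the local part and the kernel-H\"older estimate for the far part both rest on hypotheses that fail here; your closing ``alternative route'' (a Caccioppoli iteration applied to $\phi''$) is the right instinct but is applied to the wrong function and is not carried out.

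The paper avoids all pointwise kernel regularity by staying on the dual side. To bound the $L^q$ oscillation of $\phi$ on $I(x_0,r)$ for $q\ge 2$, it pairs against $\eta\in L^{q'}(\partial\Omega)$ with $q'\le 2$, supported in $I(x_0,r)$ and of mean zero, solves the Neumann problem for $v$ with data $\eta$ --- which is solvable with $\|(\nabla v)^*\|_{L^{q'}}\le C\|\eta\|_{L^{q'}}$ exactly in the range $1<q'\le 2$ on Lipschitz domains --- and then estimates $\int_{\partial\Omega}|\nabla v||g|$ over dyadic annuli. The far annuli are handled by a Caccioppoli-type inequality \eqref{P1-2d} applied to $v$ (whose Neumann data vanishes there), fed by the pointwise decay of $v$ itself, which comes from the H\"older estimate \eqref{NF} for \emph{differences} of the Neumann function --- an estimate on $N$, not on $\nabla_y N$, and hence valid on Lipschitz domains. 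If you want to repair your argument, replace the pointwise kernel bounds with this dual formulation: the ``kernel regularity in $x$'' you need is exactly the averaged decay of $\nabla v$ on annuli away from $\operatorname{supp}\eta$, which Caccioppoli plus \eqref{NF} delivers with a small exponent $\sigma>0$ depending on the Lipschitz character.
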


\begin{proof}
Fix $x_0 \in \partial \Omega$ and $0< r< r_0$.
Let $v\in H^1(\Omega) $ be a weak solution of the Neumann problem,
\begin{equation}\label{P1-1}
\Delta v=0 \quad \text{ in } \Omega  \quad \text{ and } \quad \frac{\partial v}{\partial n } =\eta \quad \text{ on } \partial \Omega,
\end{equation}
where $\eta  \in L^2 (\partial\Omega) $, supp$(\eta) \subset I(x_0, r)$  and $\int_{\partial\Omega} \eta =0$.
Since $1< q^\prime\le 2$, 
under the assumption that $\Omega$ is Lipschitz,  the estimate
$\|(\nabla v)^* \|_{L^{q^\prime} (\partial\Omega)} \le C \| \eta \|_{L^{q^\prime} (\partial\Omega)}$ holds and $\nabla v$ has non-tangential limits a.e. on $\partial\Omega$, where
$(\nabla v)^*$ denotes the non-tangential maximal function of $\nabla v$.
Moreover, assuming $\int_{\partial \Omega} v =0$, we have 
\begin{equation}\label{NF-1}
v(x) =\int_{\partial\Omega} \left( N(x, y)-N(x, x_0) \right)  \eta (y) \, dy
\end{equation}
for any $x\in \Omega$, where $N(x, y)$ denotes the Neumann function for $-\Delta$ in $\Omega$.
Using the estimate,
\begin{equation} \label{NF}
|N(x, y) -N(x, x_0)| \le \frac{Cr^\sigma}{|x-y|^{d-2 +\sigma}}
\end{equation}
for $x\in \Omega \setminus B(x_0, 2r)$ and $y\in I(x_0, r)$, where $\sigma>0$ depends on $\Omega$, we obtain
\begin{equation}\label{BF-2}
|v(x) | \le \frac{Cr^\sigma}{|x-x_0|^{d-2 +\sigma}} \|\eta\|_{L^{q^\prime}(\partial\Omega)} r^{\frac{d-1}{q}}.
\end{equation}
We refer the reader to \cite{kenig-book} for proofs of these results.

Note that 
\begin{equation}\label{P1-2}
\aligned
\int_{I (x_0, r)} \Big (\phi -\fint_{I(x_0, r)} \phi \Big)  \eta &=
\int_{\partial\Omega} \phi \cdot \eta
  =\int_{\partial \Omega} \phi \frac{\partial v}{\partial n}
=\int_{\partial \Omega}  v \frac{\partial \phi }{\partial n }\\
&=\int_{\partial \Omega}  v (n_i \partial_j -n_j \partial_i) g_{ij} 
=-\int_{\partial\Omega}  g_{ij} (n_i \partial_j -n_j \partial_i) v.
\endaligned
\end{equation}
Hence, 
\begin{equation}\label{P1-2a}
\aligned
\big| \int_{I (x_0, r)} \Big (\phi -\fint_{I(x_0, r)} \phi \Big)  \eta\big| 
 & \le C \int_{\partial\Omega} |\nabla v| |g |
  = C \sum_{j=0}^{J+1} A_j, 
 \endaligned
\end{equation}
where $2^J r \approx r_0$, and 
\begin{equation}\label{P1-2b}
\aligned
A_0  & = \int_{I(x_0, 4r)} |\nabla v | |g|, \\
A_j & =\int_{I(x_0, 2^{j+2} r)\setminus I(x_0, 2^{j+1}  r)} |\nabla v | | g| \quad \text{ for } j=1,2, \dots, J,\\
A_{J+1} & =\int_{\partial\Omega\setminus  I(x_0, 2^{J+2}  r)} |\nabla v | | g|.
\endaligned
\end{equation}
We now proceed to bound each term in \eqref{P1-2b}.
First, observe  that by H\"older's inequality, 
\begin{equation}\label{P1-2c}
\aligned
A_0  & \le C \| \nabla v \|_{L^{q^\prime}(\partial\Omega)}  \left(\fint_{I(x_0, 4r)} |g |^q \right)^{1/q} r^{\frac{d-1}{q}}\\
  & \le C \| \eta  \|_{L^{q^\prime}(\partial\Omega)}  \left(\fint_{I(x_0, 4r)} |g |^q \right)^{1/q} r^{\frac{d-1}{q}}\\
  &  \le C \| \eta \|_{L^{q^\prime} (\partial\Omega)}  M_q(g) (r) \,   r^{\frac{d-1}{q}},
    \endaligned
\end{equation}
where we have used the notation \eqref{M-t}. 

Next,  note that if   $\frac{\partial v}{\partial n} =0$ on $I (y_0, 2R)$ for some $y_0\in \partial\Omega$ and
$r< R< r_0$, then
\begin{equation}\label{P1-2d}
\aligned
\left( \fint_{I(y_0, R)} |\nabla v|^{q^\prime}\right)^{1/q^\prime}
 & \le \left( \fint_{I(y_0, R)} |\nabla v|^{2}\right)^{1/2}
  \le \frac{C}{R}\fint_{B(y_0, 2R)\cap \Omega } |v | \\
 & \le C \left (\frac{r}{R} \right)^\sigma
  R^{1-d} \| \eta\|_{L^{q^\prime} (\partial\Omega)} r^{\frac{d-1}{q}},
  \endaligned
\end{equation}
where we have used the $L^2$ estimates for the Neumann problem for the second inequality \cite{kenig-book}
and  \eqref{BF-2} for the last.
This, together with H\"older's inequality,  allows us to deduce that
$$
A_j \le C 2^{-j \sigma}   \| \eta \|_{L^{q^\prime} (\partial\Omega)}M_q (g) (r)  \ r^{\frac{d-1}{q}}
$$
for $j=1, 2, \dots, J$.
The same argument also yields $A_{J+1} \le C  \| \eta \|_{L^{q^\prime} (\partial\Omega)} M_q (g) (r) r^{\frac{d-1}{q}}$.
By summation we obtain 
$$
\big| \int_{I (x_0, r)} \Big (\phi -\fint_{I(x_0, r)} \phi \Big)  \eta\big| 
\le 
C  \| \eta \|_{L^{q^\prime} (\partial\Omega)}M_q (g) (r) \, r^{\frac{d-1}{q}}.
$$
By duality this implies that
$$
\left(\fint_{I(x_0, r)} |\phi -\fint_{I(x_0, r) } \phi|^q \right)^{1/q}
\le C M_q(g) (r)
$$
for any $x_0\in \partial\Omega$ and $0<r< r_0$.
\end{proof}

\begin{remark}\label{re-BMO}
It follows from \eqref{P1-0}  that
\begin{equation}
\|\phi \|_{B\!M\!O(\partial\Omega)}
\le C \| g \|_{L^\infty(\partial\Omega)},
\end{equation}
where $C$ depends  on $d$ and $\Omega$.
However, this estimate is not strong enough for our approach to resolvent estimates in $C^1$ domains.
\end{remark}

Recall that  $\delta (x)=  \text{\rm dist}(x, \partial\Omega)$.

\begin{lemma}\label{lemma-N1}
Let  $2\le q< \infty$.
Let $\phi$ be the same as in Lemma \ref{lemma-P1}.
Then, for $0< r< r_0$,
\begin{equation}\label{N1-0}
\sup_{x_0 \in \partial\Omega} 
\left(\fint_{B(x_0, r) \cap \Omega} |\phi -\fint_{I(x_0, r)} \phi |^q\right)^{1/q}
\le C M_q(g) (r).
\end{equation}
Moreover, for any $x\in \Omega$,
\begin{equation}\label{N1-1}
 \delta(x) |\nabla \phi (x) | 
\le C M_q(g) (\delta (x) ).
\end{equation}
 The constants  $C$ in \eqref{N1-0}-\eqref{N1-1}  depend at most on $d$, $q$ and the Lipschitz character of $\Omega$.
 \end{lemma}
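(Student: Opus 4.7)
I would establish the interior $L^q$ oscillation estimate \eqref{N1-0} by a duality argument closely parallel to the proof of Lemma~\ref{lemma-P1}, and then derive the pointwise gradient bound \eqref{N1-1} from \eqref{N1-0} via the standard mean-value gradient estimate for harmonic functions.

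For \eqref{N1-0}, fix $x_0 \in \partial\Omega$, $0<r<r_0$, set $\bar\phi_r=\fint_{I(x_0,r)}\phi$, and test against an arbitrary $\eta \in L^{q'}(B(x_0,r)\cap\Omega)$ with $\|\eta\|_{L^{q'}}\le 1$ and $\int\eta=0$. Let $v$ be the mean-zero solution of the \emph{interior} Neumann problem
\begin{equation*}
-\Delta v = \eta \ \text{ in } \Omega, \qquad \frac{\partial v}{\partial n} = 0 \ \text{ on } \partial\Omega,
\end{equation*}
which is solvable because $\int_\Omega\eta=0$. Two integrations by parts, using $\Delta\phi=0$, $\partial v/\partial n=0$ on $\partial\Omega$, and the Neumann condition $\partial\phi/\partial n=(n_i\partial_j-n_j\partial_i)g_{ij}$, produce
\begin{equation*}
\int_{B(x_0,r)\cap\Omega} (\phi - \bar\phi_r)\,\eta
= \int_{\partial\Omega} v\,\frac{\partial\phi}{\partial n}
= -\int_{\partial\Omega} g_{ij}\,(n_i\partial_j - n_j\partial_i)\,v,
\end{equation*}
so bounding this quantity uniformly in $\eta$ will give \eqref{N1-0} by duality.

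Next I would split $\partial\Omega = I(x_0,4r) \cup \bigcup_{j\ge 1}A_j$ with $A_j=I(x_0,2^{j+2}r)\setminus I(x_0,2^{j+1}r)$ and estimate each piece as in the proof of Lemma~\ref{lemma-P1}. On $I(x_0,4r)$, H\"older's inequality together with the $L^{q'}$ non-tangential bound $\|(\nabla v)^*\|_{L^{q'}(\partial\Omega)}\le C\|\eta\|_{L^{q'}(\Omega)}$ for the interior Neumann problem in a Lipschitz domain, combined with Lemma~\ref{lemma-P1} applied to $I(x_0,4r)$, yields a contribution of order $M_q(g)(r)\,r^{(d-1)/q+1/q}$. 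On each far annulus $A_j$, the condition $\supp{\eta}\subset B(x_0,r)$ makes $v$ harmonic near $A_j$ with zero Neumann data on $A_j$; the H\"older decay of the Neumann function in $y$ for $|y-x_0|\gg r$ (the analogue of \eqref{NF}--\eqref{BF-2} with source and data roles interchanged) combined with a boundary Caccioppoli/H\"older argument yields $\|(\nabla v)^*\|_{L^{q'}(A_j)}\le C\,2^{-j\sigma}\,\|\eta\|_{L^{q'}(\Omega)}$ for some $\sigma>0$. Pairing this with the Lemma~\ref{lemma-P1} bound on $I(x_0,2^{j+2}r)$ and the telescoped estimate $|\bar\phi_{2^{j+2}r}-\bar\phi_r|\le C(j+1)M_q(g)(r)$ (a direct iteration of \eqref{P1-0}) gives a per-annulus contribution of order $2^{-j\sigma}(j+1)M_q(g)(r)\,r^{d/q}$. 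The geometric factor dominates the linear growth, so summation over $j$ yields \eqref{N1-0}.

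For \eqref{N1-1}, harmonicity of $\phi$ and the standard mean-value gradient inequality give
\begin{equation*}
|\nabla\phi(x)| \le \frac{C}{\delta(x)}\,\inf_{\beta\in\C}\left(\fint_{B(x,\delta(x)/2)}|\phi-\beta|^q\right)^{1/q}.
\end{equation*}
When $\delta(x)\le r_0/4$, pick $x^*\in\partial\Omega$ with $|x-x^*|=\delta(x)$; then $B(x,\delta(x)/2)\subset B(x^*,2\delta(x))\cap\Omega$ with comparable measures, and choosing $\beta=\fint_{I(x^*,2\delta(x))}\phi$ and invoking \eqref{N1-0} at $x^*$ with radius $2\delta(x)$ gives $\delta(x)|\nabla\phi(x)|\le C M_q(g)(2\delta(x))\le C M_q(g)(\delta(x))$. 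The remaining case $\delta(x)>r_0/4$ is handled by a short Harnack chain connecting $x$ to a point with $\delta\sim r_0/4$ and applying the case already proved. The chief technical obstacle is producing the geometric decay $\|(\nabla v)^*\|_{L^{q'}(A_j)}\lesssim 2^{-j\sigma}\|\eta\|_{L^{q'}}$ for the Neumann potential of a localized interior source; this is the mirror image of the boundary-data tail bound exploited in Lemma~\ref{lemma-P1} and rests on the same Dahlberg--Kenig $L^{q'}$-solvability of the Neumann problem in Lipschitz domains together with the H\"older decay of the Neumann function away from its pole.
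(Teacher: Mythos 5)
Your proof of \eqref{N1-1} from \eqref{N1-0} matches the paper (mean-value gradient inequality plus a global argument for large $\delta(x)$; the paper uses the maximum principle, you use a Harnack chain — either works). But your route to \eqref{N1-0} is genuinely different from the paper's and has a real gap.

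The paper does not use duality. It fixes $\beta=\fint_{I(x_0,r)}\phi$, writes $\phi-\beta=\phi_1+\phi_2$ with $\phi_1$ the harmonic extension of the truncated boundary data $(\phi-\beta)\chi_{I(x_0,4r)}$, bounds $\phi_1$ by the $L^q$ nontangential-maximal-function estimate for the Dirichlet problem (available for $q\ge 2$ in Lipschitz domains), and bounds $\phi_2$ via a Poisson/Green representation together with the oscillation decay of the Green function and the telescoping BMO bound \eqref{BMO-1}. You instead dualize against an interior mean-zero $\eta$ and solve a Neumann problem with interior source. That is a reasonable mirror image, but it produces only the estimate
\begin{equation*}
\inf_{\alpha\in\C}\left(\fint_{B(x_0,r)\cap\Omega}|\phi-\alpha|^q\right)^{1/q}\le C\,M_q(g)(r),
\end{equation*}
because restricting to $\int\eta=0$ (which you must do to solve $-\Delta v=\eta$, $\partial_n v=0$) kills exactly the constants. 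To recover \eqref{N1-0} with the specific constant $\fint_{I(x_0,r)}\phi$ you still need to show
\begin{equation*}
\Big|\fint_{B(x_0,r)\cap\Omega}\phi-\fint_{I(x_0,r)}\phi\Big|\le C\,M_q(g)(r),
\end{equation*}
and this does not come for free: estimating it by a trace/divergence-theorem identity reintroduces $\nabla\phi$ (circular), and estimating it via a Poisson representation essentially reconstructs the paper's $\phi_1+\phi_2$ argument. This is the missing step, and it is genuinely needed because the subsequent Theorems \ref{thm-N1}, \ref{thm-P1}, \ref{thm-P2} use that specific boundary average.

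A second, softer, concern: the inputs you invoke — $\|(\nabla v)^*\|_{L^{q'}(\partial\Omega)}\le C\|\eta\|_{L^{q'}(\Omega)}$ for the inhomogeneous interior Neumann problem, and the dyadic decay $\|(\nabla v)^*\|_{L^{q'}(A_j)}\lesssim 2^{-j\sigma}\|\eta\|_{L^{q'}}$ — are plausible (split off the Newtonian potential, then apply the homogeneous $L^{q'}$ Neumann solvability and the symmetry/decay of $N(x,y)$), but they are not in the paper's toolbox and require a nontrivial trace argument for $\partial_n(\text{Newtonian potential})$ and a localized Neumann regularity estimate. The paper avoids this entirely by working with the Dirichlet problem for $\phi$ itself, for which the needed nontangential estimate is a single cited fact. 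If you want to keep your duality scheme, you must both supply those intermediate estimates and close the gap between the interior and boundary averages.
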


\begin{proof}
Fix $x_0 \in \partial\Omega$ and 
let $\beta=\fint_{I(x_0, r)} \phi $.
Write $\phi-\beta =\phi_1 +\phi_2$, where $\phi_1$ and $\phi_2$ are bounded harmonic functions in $\Omega$ and
$\phi_1 = (\phi - \beta)\chi_{I(x_0, 4r) }$ on $\partial\Omega$.
Note that 
$$
\aligned
\left(\fint_{B(x_0, r)\cap \Omega} 
|\phi_1|^q \right)^{1/q}
 & \le C r^{\frac{1-d}{q}} \| (\phi_1)^* \|_{L^q(\partial\Omega)}
\le C  r^{\frac{1-d}{q}} \| \phi_1 \|_{L^q(\partial\Omega)}\\
&\le C r^{\frac{1-d}{q}} \| \phi-\beta \|_{L^q(I(x_0, 4r))}
\le C  M_q(g) (r),
\endaligned
$$
where $(\phi_1)^*$ denotes the non-tangential  maximal function of $\phi_1$ and we have used Lemma \ref{lemma-P1} for 
the last step.
We point out  that in a bounded Lipschitz domain $\Omega$,
the  estimate $\| (\phi_1)^* \|_{L^q(\partial\Omega)}
\le C \| \phi_1\|_{L^q(\partial\Omega)}$ holds for any $q\ge 2$ \cite{kenig-book}. 

Next, we will show that for any $x\in B(x_0, r) \cap \Omega$, 
\begin{equation}\label{P2-2}
|\phi_2 (x)| \le C  M_q(g) (r).
\end{equation}
To this end, we represent $\phi_2$ by a Poisson integral,
$$
\phi_2 (x) =\int_{\partial \Omega \setminus B(x_0, 4r)} P(x, y) (\phi (y) -\beta) dy.
$$
Let $G(x, y) $ denote the Green function for $-\Delta$ in $\Omega$.
Note that
$$
\aligned
|\phi_2 (x)|
&\le  \int_{\partial \Omega \setminus B(x_0, 4r)}  |\nabla_y G(x, y)|  |\phi (y) -\beta  | dy\\
 &\le \sum_{j=1}^J \int_{I(x_0, 2^{j+2} r)\setminus I(x_0, 2^{j+1}  r) }
|\nabla_y G(x, y) | |\phi -\beta|\, dy \\
&\qquad\qquad
+\int_{\partial \Omega \setminus I (x_0, 2^{J+2} r)} 
|\nabla_y G(x, y)| |\phi (y) -\beta|\, dy\\
&=\sum_{j=1}^J A_j + A_{J+1}, 
\endaligned
$$
where $2^J r \approx r_0$.
Using the estimate
\begin{equation}\label{P2-3}
\int_{I (x_0, 2R) )\setminus I(x_0, R)}
|\nabla_y G(x, y)|^2\, dy
\le \frac{C}{R^3}
\int_{[B(x_0, 3R)\setminus B(x_0, R/2)]\cap \Omega} 
|G(x, y)|^2\, dy
\end{equation}
for $R\ge 4r$ and $x\in B(x_0, r)\cap \Omega$, 
and
\begin{equation}
|G(x, y) |\le \frac{C r^\sigma}{|x-y|^{d-2 +\sigma}}
\end{equation}
for $|x-y|\ge r$,  where $\sigma>0$ depends on $\Omega$, we deduce that
\begin{equation}\label{P2-3a}
\aligned
\int_{I(x_0, 2R)\setminus I(x_0, R)}
|\nabla_y G(x, y) | |\phi(y)-\beta|\, dy 
 & \le C \left(\frac{r}{R}\right)^\sigma
\left(\fint_{I(x_0, 2R) } |\phi -\beta|^2 \right)^{1/2}\\
& \le  C \left(\frac{r}{R}\right)^\sigma
\ln \left(\frac{R}{r}\right)  M_q(g)(r).
\endaligned
\end{equation}
For the second inequality in \eqref{P2-3a}, we have used \eqref{P1-0} as well as  the observation that
\begin{equation}\label{BMO-1}
\Big| \fint_{I(x_0, R)} \phi -\fint_{I(x_0, r)}  \phi\Big  |
\le C \ln \left(\frac{R}{r} \right) 
\sup_{r\le  t \le  R}
 \fint_{I(x_0, t)} \big|\phi 
-\fint_{I(x_0, t)} \phi \big|.
\end{equation}
It follows from \eqref{P2-3a}  that
$$
A_j \le C  j 2^{-j \sigma}  M_q(g)(r) 
$$ 
for $j=1, 2, \dots, J$. 
Similar,  we may show that
$$
A_{J+1} \le C r^\sigma  |\ln r |  M_q(g) (r).
$$
As a result, we obtain \eqref{P2-2} by summation and complete the proof of \eqref{N1-0}. 

Finally, since $\phi$ is harmonic in $\Omega$, we have
$$
\delta (x) |\nabla \phi (x)| \le C  \inf_{\beta\in \C} \fint_{B(x,  \delta(x))} |\phi-\beta|.
$$
This, together with \eqref{N1-0}, implies that  if $\delta (x) \le cr_0$, then
\begin{equation}\label{P2-00}
\delta (x) |\nabla \phi (x) | \le C M_q(g) (\delta(x)).
\end{equation}
By using the maximum principle on $\nabla \phi$, one may show that \eqref{P2-00} holds for any $x\in \Omega$.
\end{proof} 

\begin{remark}\label{re-P1}
It follows from \eqref{N1-1} that 
\begin{equation}\label{P-200}
\delta (x) |\nabla \phi (x) | \le C \| g \|_{L^\infty (\partial\Omega)} \quad \text{ for any } x\in \Omega.
\end{equation}
The estimate \eqref{P-200} was proved in \cite{KLS-2013} under the assumption that $\Omega$ is a $C^{1, \alpha} $ domain for some
$\alpha>0$, using the estimate for the Neumann function,
$$
|\nabla_x\nabla_y N(x, y) |\le  C |x-y|^{-d}
$$
for $x, y\in \Omega$.
Independently, the estimate \eqref{P-200} was proved for bounded and exterior domains with  $C^3$ boundaries  in the study of the $L^\infty$ Stokes semigroup by
K. Abe and Y. Giga in  \cite{AG-2013, AG-2014}. 
\end{remark}

\begin{thm}\label{thm-N1}
Let $\Omega$ be a bounded Lipschitz domain in $\R^d$, $d\ge 2$ and $2\le q< \infty$.
Let $\phi \in C^1(\overline{\Omega})$  be a solution of \eqref{NP0}.
Then, 
\begin{equation}\label{P3-0}
\sup_{B(x, 2r)\subset \Omega}
\left(\fint_{B(x, r)} |\phi - \fint_{B(x, r)} \phi |^q \right)^{1/q}
\le C M_q (g) (r) ,
\end{equation}
and for $x_0\in \partial\Omega$ and $0<r< r_0$,
\begin{equation}\label{P3-00}
\left(\fint_{B(x_0, r) \cap \partial\Omega}
|\phi -\fint_{B(x_0, r) \cap \partial\Omega} \phi |^q \right)^{1/q}
+
\left(\fint_{B(x_0, r) \cap \Omega}
|\phi -\fint_{B(x_0, r) \cap\partial \Omega} \phi  |^q \right)^{1/q}
\le C  M_q (g) (r), 
\end{equation}
where  $C$ 
depends only on $d$, $q$,  and the Lipschitz character of $\Omega$.
\end{thm}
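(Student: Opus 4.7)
The plan is to derive Theorem \ref{thm-N1} as an essentially immediate consequence of Lemmas \ref{lemma-P1} and \ref{lemma-N1}. The interior estimate \eqref{P3-0} will follow from the pointwise gradient bound \eqref{N1-1} combined with the standard $L^q$ Poincar\'e inequality on a ball, while the boundary estimate \eqref{P3-00} is a direct rewriting of \eqref{P1-0} and \eqref{N1-0} once one observes that the sets $I(x_0,r)$ and $B(x_0,r)\cap\partial\Omega$ are identical by definition, so the two candidate averages of $\phi$ coincide.

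For the interior estimate, I would first record the monotonicity $M_q(g)(s_1)\ge M_q(g)(s_2)$ for $s_1\le s_2$, which is built into the definition \eqref{M-t} because enlarging the argument only shrinks the collection $\{s\le t<r_0\}$ over which the supremum is taken. Now suppose $B(x,2r)\subset\Omega$ and let $y\in B(x,r)$; then $B(y,r)\subset B(x,2r)\subset\Omega$, so $\delta(y)\ge r$. Consequently \eqref{N1-1} together with this monotonicity gives
$$|\nabla\phi(y)|\le \frac{C}{\delta(y)}M_q(g)(\delta(y))\le\frac{C}{r}M_q(g)(r).$$
The $L^q$ Poincar\'e inequality on $B(x,r)$ then yields
$$\left(\fint_{B(x,r)}\bigl|\phi-\fint_{B(x,r)}\phi\bigr|^q\right)^{1/q}\le Cr\left(\fint_{B(x,r)}|\nabla\phi|^q\right)^{1/q}\le CM_q(g)(r),$$
which is \eqref{P3-0}.

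For the boundary estimate, since $I(x_0,r)=B(x_0,r)\cap\partial\Omega$, the constants $\fint_{I(x_0,r)}\phi$ and $\fint_{B(x_0,r)\cap\partial\Omega}\phi$ are one and the same number, which I call $\beta$. The first term on the left of \eqref{P3-00} is then the boundary $L^q$ oscillation of $\phi$ around $\beta$, bounded by $CM_q(g)(r)$ via \eqref{P1-0}; the second term is the interior $L^q$ deviation of $\phi$ from the boundary average $\beta$, bounded by $CM_q(g)(r)$ via \eqref{N1-0}. Summing these two inequalities produces \eqref{P3-00}. I do not anticipate any real obstacle here: the argument is short and mechanical given the preceding lemmas, and the only care required is in the trivial observations that $M_q(g)$ is non-increasing and that the two averaging sets on the boundary coincide. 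A purely $L^q$-based alternative route for the interior estimate, avoiding \eqref{N1-1}, would use Caccioppoli on the harmonic function $\phi-c$ in $B(x,2r)$ together with a chain of boundary-average comparisons from \eqref{N1-0}, but the direct Poincar\'e approach via the pointwise gradient bound is by far the cleanest path.
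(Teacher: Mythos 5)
Your proposal is correct and follows essentially the same route as the paper: the interior bound \eqref{P3-0} is obtained from the pointwise gradient estimate \eqref{N1-1} together with the monotonicity of $M_q(g)$ and the $L^q$ Poincar\'e inequality on $B(x,r)$, and the boundary bound \eqref{P3-00} is exactly the combination of \eqref{P1-0} and \eqref{N1-0} once one notes that $I(x_0,r)=B(x_0,r)\cap\partial\Omega$ so the two averages coincide. No gaps.
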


\begin{proof}
The estimate \eqref{P3-0} follows from \eqref{N1-1} by using a Poincar\'e inequality,
while \eqref{P3-00} is contained in Lemmas \ref{lemma-P1} and \ref{lemma-N1}.
\end{proof}

\begin{remark}\label{re-N}
Let $\phi \in C^1(\overline{\Omega})$ be a weak  solution of the Neumann problem,
\begin{equation}\label{NP1}
\left\{
\aligned
 \Delta \phi & =0 & \quad & \text{ in } \Omega,\\
 \frac{\partial \phi}{\partial n} &= (n_i \partial_j -n_j \partial_i) g_{ij}  +h  & \quad & \text{ on } \partial\Omega,
\endaligned
\right.
\end{equation}
where $g_{ij} \in C^1(\partial\Omega)$ and $h \in C(\partial\Omega)$.
Then the estimates \eqref{P3-0}-\eqref{P3-00}
hold with $M_q(g)(r) $ replaced by $ M_q(g) (r)  + \| h\|_{L^\infty(\partial\Omega)}$.
Indeed, 
in  view of Theorem \ref{thm-N1},  we only need to consider the case $g=0$.
To this end,
we  assume $\int_{\partial\Omega} \phi=0$.
This allows us to represent $\phi$ by the Neumann function,
$$
\phi(x) =\int_{\partial \Omega} N(x, y) h(y)\, dy.
$$
Using the estimates $|N(x, y)|\le C |x-y|^{2-d}$ for $d\ge 3$ and
$|N(x, y)|\le C( 1+|\ln |x-y|| )$ for $d=2$, we deduce that
\begin{equation}\label{NP-110}
 \|\phi \|_{L^\infty(\Omega)} \le C \|h \|_{L^\infty(\partial\Omega)},
 \end{equation}
 which gives the desired estimates.
 \end{remark}

%%%%%%%%

\section{A Neumann problem in an exterior Lipschitz domain}\label{section-Ne}

Throughout this section we assume that  $\Omega$ is an exterior Lipschitz domain in $\R^d$, $d\ge 2$.
We begin with some classical results on the asymptotic behavior  of harmonic functions at infinity.

\begin{lemma}\label{lemma-Ne0}
Let $\phi$ be harmonic in $\Omega$.
Suppose that as $|x|\to \infty$,  $\phi (x) = O(1)$ if $d\ge 3$, and 
$\phi (x) = o(\log |x|)$ if $d=2$. Then
\begin{equation}\label{HA-1}
\phi(x)=
\left\{
\aligned
 &  \beta + O(|x|^{2-d})  &\quad & \text{ if } \ d\ge 3,\\
 & \beta + O(|x|^{-1}) & \quad & \text{ if } \  d=2,
\endaligned
\right.
\end{equation}
for some $\beta \in \C$ as $|x|\to \infty$.
Moreover,
\begin{equation}\label{HA-2}
|\nabla \phi (x)| =
\left\{
\aligned
& O(|x|^{1-d} ) & \quad & \text{ if } \ d\ge 3,\\
& O(|x|^{-2}) & \quad & \text{ if } \ d=2,
\endaligned
\right.
\end{equation}
as $|x|\to \infty$.
\end{lemma}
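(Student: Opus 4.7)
The plan is to reduce to harmonic functions in the exterior of a ball and apply a Laurent-type spherical-harmonic expansion. Since $\Omega$ is an exterior Lipschitz domain, there exists $R_0>0$ with $\R^d\setminus\overline{B(0,R_0)}\subset\Omega$, and because both conclusions concern behavior as $|x|\to\infty$ it suffices to work on $\{|x|>R_0\}$. On this region any harmonic function admits the separation-of-variables expansion
\[
\phi(r\omega)=\sum_{k\ge 0}\sum_{j}\bigl(a_{k,j}r^k+b_{k,j}r^{-k-(d-2)}\bigr)Y_{k,j}(\omega)
\]
for $d\ge 3$, where $\{Y_{k,j}\}$ is an $L^2(S^{d-1})$-orthonormal basis of spherical harmonics; for $d=2$ the analogous expansion has the form $a_0+b_0\log r+\sum_{k\ge 1}(a_kr^k+b_kr^{-k})(c_k\cos k\theta+s_k\sin k\theta)$. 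The coefficients are obtained by integrating $\phi$ against the corresponding angular mode on each sphere $\partial B_r$, and the series converges uniformly on compact subsets of $\{|x|>R_0\}$.

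I would then use the hypotheses to eliminate every growing mode. Each term $r^kY_{k,j}$ with $k\ge 1$ is unbounded as $r\to\infty$, as is $\log r$ in dimension two. The boundedness hypothesis $\phi=O(1)$ for $d\ge 3$ therefore forces $a_{k,j}=0$ for every $k\ge 1$, while the assumption $\phi=o(\log|x|)$ for $d=2$ forces $b_0=0$ together with $a_k=0$ for every $k\ge 1$. Letting $\beta$ denote the remaining constant term, the series for $\phi-\beta$ contains only decaying modes; the slowest-decaying term is of order $r^{-(d-2)}$ for $d\ge 3$ and of order $r^{-1}$ for $d=2$, which gives exactly \eqref{HA-1}.

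Finally, \eqref{HA-2} follows at once from \eqref{HA-1} by the standard interior gradient estimate for harmonic functions: for $|x|>2R_0$ the ball $B(x,|x|/2)$ lies inside $\{|y|>R_0\}$, so
\[
|\nabla\phi(x)|=|\nabla(\phi-\beta)(x)|\le \frac{C}{|x|}\sup_{B(x,|x|/2)}|\phi-\beta|,
\]
and the right-hand side is $O(|x|^{1-d})$ in the case $d\ge 3$ and $O(|x|^{-2})$ in the case $d=2$. The only point requiring care is the rigorous derivation of the exterior spherical-harmonic expansion and the termwise removal of the growing modes; this is classical and can equivalently be obtained via the Kelvin transform $x\mapsto x/|x|^2$ together with the removable-singularity / B\^ocher theorem for isolated singularities of harmonic functions on a punctured ball, applied respectively to $\phi(x/|x|^2)$ (for $d=2$) and to $|y|^{2-d}\phi(y/|y|^2)$ (for $d\ge 3$).
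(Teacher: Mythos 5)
Your argument is correct, and it fills in precisely the classical details that the paper delegates to the reference \cite{HFT} (its ``proof'' is a one-line citation). Both routes you sketch are standard: the exterior spherical-harmonic expansion, or equivalently the Kelvin transform combined with B\^ocher's removable-singularity theorem. The Kelvin-transform route you mention at the end is the cleaner of the two for rigor, since the direct series argument requires one extra step — controlling the tail of $\sum_{k,j} b_{k,j} r^{-k-(d-2)} Y_{k,j}(\omega)$ uniformly in $\omega$ for large $r$, not merely uniform convergence on compacts — which is precisely what the Kelvin/B\^ocher argument sidesteps by reducing everything to a $C^1$ harmonic extension near the origin. Your derivation of \eqref{HA-2} from \eqref{HA-1} via the interior gradient estimate on $B(x,|x|/2)$ is correct. (Minor notational quibble: in the $d=2$ expansion the radial factors $r^k$ and $r^{-k}$ should each carry their own pair of trigonometric coefficients rather than a common factored form $(a_k r^k + b_k r^{-k})(c_k\cos k\theta + s_k\sin k\theta)$, but this does not affect the argument.)
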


\begin{proof}
See  e.g. \cite{HFT}.
\end{proof}

\begin{remark}\label{re-AS1}
Let $\phi$ be harmonic in $\Omega$.
Suppose $\phi  \in C^1(\overline{\Omega})$  and $\phi (x)  \to 0$ as $|x|\to \infty$.
Let  $D=B(0, R)  \cap \Omega$, where $R$ is large.
Let $\Gamma^x(y)=\Gamma (x-y)$ denote the fundamental solution for $-\Delta$ in $\R^d$.
By applying Green's representation formula to $ \phi$,
$$
  \phi(x)
=
\int_{\partial D} \Gamma^x \frac{\partial}{\partial n} \phi 
-\int_{\partial D} \frac{\partial \Gamma^x}{\partial n}  \phi
$$
for $x\in D$, and then letting $R \to \infty$, 
we obtain
\begin{equation}\label{Green}
 \phi (x)
=\int_{\partial \Omega  } \Gamma^x \frac{\partial\phi }{\partial n} 
-\int_{\partial  \Omega } \frac{\partial \Gamma^x}{\partial n}  \phi
\end{equation}
for $x\in \Omega $. In the case $d=2$, using the observation that $\int_{\partial\Omega} \frac{\partial \phi}{\partial n}=0$, 
we have
\begin{equation}\label{Green-1}
 \phi (x)
=\int_{\partial \Omega  } \left( \Gamma^x- \Gamma^x (y_0)\right)   \frac{\partial\phi }{\partial n}
-
\int_{\partial  \Omega } \frac{\partial \Gamma^x}{\partial n}  \phi
\end{equation}
for any $x\in \Omega$,
where $y_0 \in \Omega^c$.
\end{remark}

Recall that $I(x_0, r) = B(x_0, r) \cap \partial\Omega$ and $M_q(g)(r)$ is defined by \eqref{M-t}.

\begin{lemma}\label{lemma-P1e}
Let $\phi  \in C^1(\overline{\Omega})$ be a solution of  \eqref{NP0} and 
$2\le q< \infty$.
Assume that $\phi (x) \to 0$ as $|x|\to \infty$.
Then, for $0< r< r_0$,
\begin{equation}\label{P1-0e}
\sup_{\substack{x_0 \in \partial \Omega}}
\left(\fint_{I(x_0, r)} | \phi - \fint_{I(x_0, r)} \phi|^q \right)^{1/q}
+ \big| \fint_{\partial\Omega} \phi \big| 
\le C  M_q(g )(r),
\end{equation}
where $C$ depends only on $d, q$ and the Lipschitz character of $\Omega$.
\end{lemma}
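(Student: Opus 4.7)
The argument follows the structure of Lemma \ref{lemma-P1}, substituting the exterior Neumann function $N^{\mathrm{ext}}$ for the interior one in the oscillation bound, and controlling the additional average term $|\fint_{\partial\Omega}\phi|$ (absent in the bounded case) by a direct layer-potential argument.

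For the oscillation bound, fix $x_0\in\partial\Omega$ and $0<r<r_0$. Given $\eta\in L^{q'}(\partial\Omega)$ with $\mathrm{supp}\,\eta\subset I(x_0,r)$, $\int_{\partial\Omega}\eta=0$, and $\|\eta\|_{L^{q'}(\partial\Omega)}\le 1$, I would solve the exterior Neumann problem $\Delta v=0$ in $\Omega$, $\partial v/\partial n=\eta$ on $\partial\Omega$, normalized so that $v(x)\to 0$ as $|x|\to\infty$; this normalization is available by Lemma \ref{lemma-Ne0} together with the mean-zero condition on $\eta$ (for $d=2$ one subtracts the constant limit at infinity produced by \eqref{Green-1}). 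Exterior-Lipschitz Neumann theory then supplies, for $1<q'\le 2$, both the nontangential maximal bound $\|(\nabla v)^*\|_{L^{q'}(\partial\Omega)}\le C\|\eta\|_{L^{q'}(\partial\Omega)}$ and the decay estimate
\[
|N^{\mathrm{ext}}(x,y)-N^{\mathrm{ext}}(x,x_0)|\le \frac{Cr^\sigma}{|x-y|^{d-2+\sigma}},\qquad y\in I(x_0,r),\ x\in\Omega\setminus B(x_0,2r),
\]
for some $\sigma>0$ depending on $\Omega$. These combine to give the pointwise bound $|v(x)|\le Cr^\sigma|x-x_0|^{-(d-2+\sigma)}r^{(d-1)/q}\|\eta\|_{L^{q'}(\partial\Omega)}$, which reconfirms in particular the decay of $v$ at infinity. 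I would then mimic the integration-by-parts identity \eqref{P1-2}:
\[
\int_{I(x_0,r)}\Bigl(\phi-\fint_{I(x_0,r)}\phi\Bigr)\eta=-\int_{\partial\Omega}g_{ij}(n_i\partial_j-n_j\partial_i)v,
\]
justified by first integrating by parts on $\Omega\cap B(0,R)$ and sending $R\to\infty$; the sphere contribution $\int_{\partial B(0,R)}(v\,\partial_r\phi-\phi\,\partial_r v)$ vanishes thanks to Lemma \ref{lemma-Ne0}, in both $d\ge 3$ and $d=2$. Splitting $\partial\Omega$ into the cap $I(x_0,4r)$ and dyadic annuli $I(x_0,2^{j+2}r)\setminus I(x_0,2^{j+1}r)$, applying the NT-maximal bound on the cap and Caccioppoli combined with the pointwise decay bound on $v$ on the annuli exactly as in \eqref{P1-2c}-\eqref{P1-2d}, summing $\sum_j 2^{-j\sigma}<\infty$, and dualizing in $\eta$, yields the oscillation part of \eqref{P1-0e}.

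For the extra average term I would apply Green's identity to $\phi$ and $N^{\mathrm{ext}}(x,\cdot)$ on $\Omega\cap B(0,R)$ and send $R\to\infty$: the sphere term vanishes by decay, and the boundary integral involving $\partial_n^y N^{\mathrm{ext}}(x,y)=0$ drops out, leaving $\phi(x)=\int_{\partial\Omega}N^{\mathrm{ext}}(x,y)\partial_n\phi(y)\,dy$. Tangential integration by parts on $\partial\Omega$ (which has no boundary) then gives
\[
\phi(x)=-\int_{\partial\Omega}g_{ij}(y)(n_i\partial_{y_j}-n_j\partial_{y_i})N^{\mathrm{ext}}(x,y)\,dy,\qquad x\in\overline{\Omega}.
\]
The kernel carries the same $|x-y|^{1-d}$ singularity as $(n_i\partial_{y_j}-n_j\partial_{y_i})\Gamma(x-y)$, so by the Coifman--McIntosh--Meyer theorem on Lipschitz surfaces it defines an $L^q(\partial\Omega)\to L^q(\partial\Omega)$ bounded operator for $1<q<\infty$. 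Hence $\|\phi\|_{L^q(\partial\Omega)}\le C\|g\|_{L^q(\partial\Omega)}$, and a covering argument together with the monotonicity of $M_q(g)$ gives $\|g\|_{L^q(\partial\Omega)}\le C|\partial\Omega|^{1/q}M_q(g)(r)$; combining with $|\fint_{\partial\Omega}\phi|\le C|\partial\Omega|^{-1/q}\|\phi\|_{L^q(\partial\Omega)}$ yields $|\fint_{\partial\Omega}\phi|\le CM_q(g)(r)$, completing \eqref{P1-0e}.

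\noindent\textbf{Main obstacle.} The delicate ingredients are the quantitative decay estimate for the exterior Neumann function, which must be transported from the bounded-Lipschitz setting (via Kelvin transform or direct layer-potential construction, with attention to logarithmic corrections when $d=2$), and the identification of $g\mapsto\phi|_{\partial\Omega}$ as a CZ-bounded operator expressed in terms of $g$ rather than its tangential derivative --- a regularity shift encoding the tangential-divergence structure of the Neumann datum.
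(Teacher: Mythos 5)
Your oscillation argument is essentially the paper's: test against mean-zero $\eta\in L^{q'}(\partial\Omega)$ supported in $I(x_0,r)$, solve the exterior Neumann problem with decay, justify the integration by parts via the decay of both $\phi$ and $v$ at infinity, split into a near cap and dyadic annuli using the pointwise decay of $v$, and dualize. That part is correct.

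The argument for the extra term $\bigl|\fint_{\partial\Omega}\phi\bigr|$ has a genuine gap. You write $\phi(x)=-\int_{\partial\Omega}g_{ij}(y)(n_i\partial_{y_j}-n_j\partial_{y_i})N^{\mathrm{ext}}(x,y)\,dy$ and then assert that this is an $L^q(\partial\Omega)\to L^q(\partial\Omega)$ bounded operator ``by the Coifman--McIntosh--Meyer theorem'', on the grounds that the kernel has the same singularity as $(n_i\partial_{y_j}-n_j\partial_{y_i})\Gamma(x-y)$. That reasoning does not hold for Lipschitz $\partial\Omega$. Write $N^{\mathrm{ext}}(x,y)=\Gamma(x-y)+R(x,y)$: the singular part $(n_i\partial_{y_j}-n_j\partial_{y_i})\Gamma(x-y)$ is indeed a CMM-type kernel, but the correction $R(x,\cdot)$ is the solution of a Neumann problem in $\Omega$ with datum $-\partial_n^y\Gamma(x-\cdot)$, which is itself singular when $x\in\partial\Omega$. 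On a Lipschitz boundary one has no $C^1$ (or even boundedness) control on $\nabla_y R(x,y)$ up to $\partial\Omega$, and the associated operator is \emph{not} a Calder\'on--Zygmund operator; its mapping properties are governed by the invertibility theory for the Neumann problem, which in Lipschitz domains is available only in a restricted range of exponents. So the claimed $\|\phi\|_{L^q(\partial\Omega)}\le C\|g\|_{L^q(\partial\Omega)}$ for all $1<q<\infty$ is unjustified, and in fact you do not need anything nearly that strong.

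The paper's treatment of $\bigl|\fint_{\partial\Omega}\phi\bigr|$ is both simpler and avoids this issue entirely: take the single test function $\eta\equiv1$, solve the exterior Neumann problem for $v$ with $\partial v/\partial n=1$ on $\partial\Omega$ and the stated decay at infinity, write $\int_{\partial\Omega}\phi=\int_{\partial\Omega}\phi\,\partial_n v$, apply Green's identity on $\Omega\cap B(0,R)$, let $R\to\infty$ (the sphere terms vanish by the decay of $\phi$ and $v$), and then integrate by parts tangentially on $\partial\Omega$ to reach $\bigl|\int_{\partial\Omega}\phi\bigr|\le C\|\nabla v\|_{L^2(\partial\Omega)}\|g\|_{L^2(\partial\Omega)}\le C\,M_q(g)(r)$. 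This requires only the $L^2$ Neumann estimate of Verchota, which is exactly what is available in a Lipschitz exterior domain. If you want to salvage your representation formula, observe that after Fubini and a tangential integration by parts, $\fint_{\partial\Omega}\phi$ reduces to the pairing of $g$ with the tangential gradient of $v(y)=\fint_{\partial\Omega}N^{\mathrm{ext}}(x,y)\,dx$, which is precisely the paper's $v$; no CMM-type boundedness of the full kernel is needed.
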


\begin{proof}
Since $\phi$ is harmonic in $\Omega$, it follows from the assumption 
$\phi (x)=o(1)$ as $|x|\to \infty$ that
\begin{equation}\label{Ne-1}
|\phi (x) | + |x| |\nabla \phi (x)| 
=\left\{
\aligned
 &  O(|x|^{2-d} ) & \quad & \text{ for } d\ge 3,\\
 &  O(|x|^{{-1} }) & \quad & \text{ for } d= 2,\\
\endaligned
\right.
\end{equation}
as $|x|\to \infty$.
Moreover,   if $\int_{\partial\Omega} \eta=0$, 
the Neumann problem \eqref{P1-1} possesses a solution with the properties that $(\nabla v)^* \in L^2(\partial\Omega)$ and 
\begin{equation}
|v(x)| + |x| |\nabla v(x)| = O(|x|^{1-d}) \quad \text{ as }  |x| \to \infty.
\end{equation}
See \cite{Verchota-1984} for a proof by the method of layer potentials in Lipschitz domains.
This, together with \eqref{Ne-1},  allows us  to justify the use of integration by parts in the third equality in \eqref{P1-2}.
As in the case of bounded domains,   we deduce by duality that  the first term in the left-hand side of \eqref{P1-0e}
is bounded by $C M_q(g)(r)$.

Next, let $v$ be a solution of the Neumann problem \eqref{P1-1} in $\Omega$ with $\eta=1$  and  the property that
$v (x) =O(|x|^{2-d})$ for $d\ge 3$, and $v(x)= O(|\ln |x|| ) $ for $d=2$ as $|x|\to \infty$.
It follows that $|\nabla v (x)| =O(|x|^{1-d}) $ for $d\ge 2$ as $|x|\to \infty$.
Note that
$$
\aligned
\int_{\partial\Omega} \phi 
= \int_{\partial \Omega} \phi \frac{\partial v} {\partial n}
=\int_{\partial \Omega} v \frac{\partial \phi}{\partial n}
-\int_{\partial B(0, R)} v \frac{\partial \phi}{\partial n}
+ \int_{\partial B(0, R)} \phi \frac{\partial v}{\partial n},
\endaligned
$$
where $R>1$ is so large that $\Omega^c \subset B(0, R/2)$.
This gives
\begin{equation}\label{Ne-1a}
\big| \int_{\partial\Omega} \phi \big|
\le C \|\nabla v \|_{L^2(\partial \Omega)} \| g \|_{L^2(\partial\Omega)}
+ \int_{\partial B(0, R)}
\left\{ | v| |\nabla \phi| + |\phi | |\nabla v| \right\}.
\end{equation}
Using the decay properties  at $\infty$ for $\phi$ and $v$,
we can show the last integral in \eqref{Ne-1a} converges to zero as $R \to \infty$.
As a result, we obtain 
$$
\big| \fint_{\partial\Omega} \phi \big|
\le C \| g \|_{L^2 (\partial\Omega)} \le C M_q(g) (r),
$$
which completes the proof of \eqref{P1-0e}.
\end{proof}

\begin{remark}\label{re-e1}
Let $\phi$ be the same as in Lemma \ref{lemma-P1e}.
Since $\phi (x)\to 0$ as $|x|\to \infty$, it follows that
\begin{equation}\label{e1-0}
\| (\phi)^*\|_{L^q(\partial\Omega)}
\le C  \| \phi \|_{L^q(\partial\Omega)}
\end{equation}
for $2\le q< \infty$. This, together with \eqref{P1-0e}, yields
\begin{equation}\label{e1-1}
\| (\phi)^*  \|_{L^q(\partial\Omega)}
\le C M_q(g)(r), 
\end{equation}
where $C$ depends only on $d$, $q$ and the Lipschitz character of $\Omega$.
\end{remark}

\begin{thm}\label{thm-N2}
Let $\Omega$ be an exterior Lipschitz domain in $\R^d$ and $2\le q< \infty$.
Let $\phi$ be the same as in Lemma \ref{lemma-P1e}.
Then, for $x_0\in \partial\Omega$ and $0< r< r_0$,
\begin{equation}\label{N2-0}
\left(\fint_{B(x_0, r) \cap \Omega} |\phi -\fint_{B(x_0, r)\cap \partial\Omega} \phi |^q\right)^{1/q}
\le C M_q(g)(r),
\end{equation}
where  $C$ 
depends only on $d$, $q$,  and the Lipschitz character of $\Omega$.
Moreover, for any $x\in \Omega$,
\begin{equation}\label{N2-1}
 \delta(x) |\nabla \phi (x) | 
 \le C  \left\{ M_q (g)(\delta(x)) + \| g \|_{L^q(\partial\Omega)} \right\}.
 \end{equation}
 \end{thm}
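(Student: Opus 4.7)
The plan is to adapt the argument of Lemma~\ref{lemma-N1} and Theorem~\ref{thm-N1} to the exterior setting. The boundary $L^q$ oscillation estimate on $I(x_0,r)$ and the bound on the global average $|\fint_{\partial\Omega}\phi|$ are already given by Lemma~\ref{lemma-P1e}. What remains is the interior $L^q$ estimate on $B(x_0,r)\cap\Omega$ and the pointwise gradient bound.

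For \eqref{N2-0}, fix $x_0\in\partial\Omega$, $0<r<r_0$, and set $\beta=\fint_{I(x_0,r)}\phi$. Since $\phi(x)\to 0$ at infinity, one represents $\phi$ by the Poisson integral of the exterior Dirichlet problem. Decompose $\phi=\phi_1+\phi_2$, where $\phi_1$ is the Dirichlet harmonic extension of $(\phi-\beta)\chi_{I(x_0,4r)}$ vanishing at infinity, and $\phi_2=\phi-\phi_1$. For $\phi_1$ I would use the nontangential maximal function bound $\|(\phi_1)^*\|_{L^q(\partial\Omega)}\le C\|\phi-\beta\|_{L^q(I(x_0,4r))}$ of Dahlberg--Verchota in exterior Lipschitz domains (valid for $2\le q<\infty$, cf.\ Remark~\ref{re-e1}), then integrate along cones from $I(x_0,Cr)$ to conclude $(\fint_{B(x_0,r)\cap\Omega}|\phi_1|^q)^{1/q}\le CM_q(g)(r)$. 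For $\phi_2$ I would work with the Green's function $G(x,y)$ of the exterior domain, writing $\phi_2-\beta h$ (where $h(x)=-\int_{\partial\Omega}\partial_{n_y}G(x,y)\,dy$ is the harmonic measure of $\partial\Omega$, which equals $1$ on $\partial\Omega$ and decays at $\infty$ for $d\ge3$) as a Poisson integral over $\partial\Omega\setminus I(x_0,4r)$, and iterate over dyadic annuli $I(x_0,2^{j+2}r)\setminus I(x_0,2^{j+1}r)$ using the exterior analog of the square-function estimate \eqref{P2-3} together with \eqref{P1-0e} and \eqref{BMO-1}, exactly as in the bounded case. Finally, to replace $\beta h$ by $\beta$ one absorbs the residual $\beta(h-1)$ using the bound $|\beta|\le C\log(r_0/r)M_q(g)(r)$ from \eqref{P1-0e} and boundary H\"older continuity $|h(x)-1|\le C(|x-x_0|/r_0)^\sigma$ for $x\in B(x_0,r)\cap\Omega$; in $d=2$ the same step is carried out with the renormalized fundamental solution $\Gamma^x-\Gamma^x(y_0)$ as in \eqref{Green-1}, which avoids the logarithmic divergence at infinity.

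For \eqref{N2-1}, I would split by scale. When $\delta(x)\le r_0/2$, apply the mean value inequality for the gradient of harmonic functions, $\delta(x)|\nabla\phi(x)|\le C\inf_{c\in\C}(\fint_{B(x,\delta(x))}|\phi-c|^q)^{1/q}$, and estimate the right-hand side via \eqref{N2-0} at the nearest boundary point, obtaining the $M_q(g)(\delta(x))$ term. When $\delta(x)>r_0/2$ (in particular for large $|x|$), represent $\phi$ globally by Green's formula \eqref{Green} for $d\ge3$ or \eqref{Green-1} for $d=2$, and integrate by parts the Neumann-data term using that $n_i\partial_j-n_j\partial_i$ is tangential:
$$
\int_{\partial\Omega}\Gamma^x\,\partial_n\phi
=-\int_{\partial\Omega} g_{ij}(n_i\partial_j-n_j\partial_i)\Gamma(x-y)\,dy.
$$
Differentiating under the integral and using $|\nabla_x\nabla_y\Gamma(x-y)|\le C|x-y|^{-d}$, the far-field gradient is bounded by $\|g\|_{L^q(\partial\Omega)}$ via H\"older's inequality; the remaining double-layer contribution is controlled using \eqref{e1-1}. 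This accounts for the extra $\|g\|_{L^q(\partial\Omega)}$ term in \eqref{N2-1}.

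The main obstacle is bookkeeping the fact that in an exterior domain the constant function $1$ is not an admissible Dirichlet datum vanishing at infinity, so one cannot simply subtract $\beta$ inside the Poisson representation as in Lemma~\ref{lemma-N1}. This forces the introduction of the correction term $\beta h$ (or its $d=2$ analog via the renormalization in \eqref{Green-1}), and the careful Hölder/logarithmic control needed to replace $\beta h$ by $\beta$ at small scales is the principal technical difference from the bounded-domain proof.
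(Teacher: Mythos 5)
Your proof is essentially correct, but it takes a genuinely different route from the paper's for \eqref{N2-0}. The paper sidesteps exterior-domain potential theory entirely: it performs the splitting in the \emph{graph domain} $\H_\psi$ rather than in $\Omega$ itself, setting $\phi_1, \phi_2$ to be harmonic in $\H_\psi$ with boundary data $(\phi-\beta)\chi_{B(0,2r)\cap\partial\H_\psi}$ and $(\phi-\beta)\chi_{\partial\H_\psi\cap(B(0,4r_0)\setminus B(0,2r))}$ (both compactly supported, so there is no issue with constants at infinity), and then defining a third piece $\phi_3 = \phi-\beta-(\phi_1+\phi_2)$, which is harmonic in $B(0,4r_0)\cap\Omega$ and vanishes on $B(0,4r_0)\cap\partial\Omega$. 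The piece $\phi_3$ is controlled by boundary H\"older continuity together with \eqref{e1-1} and the logarithmic telescoping \eqref{BMO-1}, yielding $Cr^\sigma(1+|\ln r|)M_q(g)(r)\le CM_q(g)(r)$. Your approach, by contrast, stays inside the exterior domain $\Omega$, uses its Green function and harmonic measure $h$, and absorbs $\beta(h-1)$ via $|\beta|\lesssim\log(r_0/r)M_q(g)(r)$ and boundary H\"older continuity of $h$. Both routes converge to the same bound and both are sound, but the paper's $\H_\psi$-localization avoids the need to discuss the constant/harmonic-measure correction (and the delicate $d=2$ renormalization) at all; your route is more direct in spirit but shifts the $d=2$ subtlety onto the construction of $h$. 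For \eqref{N2-1} the two arguments again differ slightly: the paper combines \eqref{N2-0} with the maximum principle on $|\nabla\phi|$ and then applies the Green representation to $\nabla\phi$ in $B(0,R_0)^c$ to get the far-field decay $|\nabla\phi(x)|=O(|x|^{2-d})$ (resp.\ $O(|x|^{-1})$ in $d=2$), whereas you differentiate the global Green representation for $\phi$ and integrate by parts the Neumann-data term; both give the extra $\|g\|_{L^q(\partial\Omega)}$ contribution. Both arguments are valid; the paper's version is a touch more economical.
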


\begin{proof}

Fix $x_0\in \partial\Omega$.
Without loss of generality we may assume that  $x_0=0$ and
$$
B(0, 16r_0) \cap \Omega
=B(0, 16 r_0) \cap \H_\psi
\quad \text{ and } \quad
B(0, 16r_0) \cap \partial\Omega
= B(0, 16r_0)\cap \partial \H_\psi,
$$
where $\H_\psi$ is given by \eqref{H} and $\psi: \R^{d-1} \to \R$ is a Lipschitz function with $\|\nabla \psi\|_\infty\le M$.
Let $\beta=\fint_{B(0, r) \cap \partial \Omega} \phi$.
 Let $\phi_1$ and $\phi_2$ be harmonic functions in $\H_\psi$
such that 
$$
\phi_1 =(\phi-\beta) \chi_{B(0, 2r)\cap \partial\H_\psi }, \quad 
\phi_2 = (\phi-\beta) \chi_{\partial\H_\psi \cap(  B(0, 4r_0) \setminus B(0, 2r))}
$$
 on $\partial\H_\psi$,
and that $(\phi_1)^*, (\phi_2)^*\in L^2(\partial \H_\psi)$.
Using the same argument as in the proof of Theorem \ref{thm-N1},
we may show that
\begin{equation}\label{N2-4}
\aligned
\left(\fint_{B(0, r) \cap \Omega } |\phi_1 + \phi_2 |^q \right)^{1/q}
 & \le C  M_q(g)(r),
 \endaligned
\end{equation}
where we also have used Lemma \ref{lemma-P1e}.

Next, let $\phi_3 = \phi -\beta -(\phi_1 + \phi_2)$.
Note that $\phi_3$ is harmonic in $B(0, 4r_0)\cap \Omega$ and
$\phi_3=0$ on $B(0, 4r_0)\cap \partial\Omega$.
Since $\Omega$ is Lipschitz, it follows that $\phi_3$ is H\"older continuous in
$B(0, 3r_0)\cap \Omega$ and that 
$$
\left(\fint_{B(0, r) \cap \Omega} |\phi_3|^q\right)^{1/q}
\le C r^\sigma
\left(\fint_{B(0, 3r_0)\cap \Omega} |\phi_3 |^q \right)^{1/q}
$$
for some $\sigma>0$.
This leads to
$$
\aligned
\left(\fint_{B(0, r) \cap \Omega} |\phi_3|^q\right)^{1/q}
& \le C r^\sigma
\left\{ \| (\phi)^* \|_{L^q(\partial\Omega)}
+ |\beta|
+ \| (\phi_1 + \phi_2)^* \|_{L^q(\partial\H_\psi)} \right\}\\
& \le C r^\sigma
\left\{
M_q(g)(r) 
+  |\beta|
+ \| \phi-\beta \|_{L^q(\partial\Omega)} \right\}\\
&\le C r^\sigma (1+ |\ln r|)  M_q (g) (r)\\
& \le C  M_q (g)(r),
\endaligned
$$
where we have used  \eqref{e1-1} and \eqref{BMO-1}. 
As a result, we obtain \eqref{N2-0}.

 As in the case of bounded domains,  it follows from \eqref{N2-0} that  
 $$
 \delta(x) |\nabla \phi (x) |\le C M_q(g)(\delta(x) )
 $$
  if $\delta(x) \le cr_0$.
 Since $|\nabla \phi (x)| \to 0$ as $|x| \to \infty$, by using the maximum principle,
 we deduce that $ |\nabla \phi (x) |\le  C \| g \|_{L^q(\partial\Omega) } $ if $\delta (x) \ge cr_0$.
 Finally, choose $R_0$ large so that $\Omega^c \subset B(0, R_0-3)$.
 By applying the formulas in \eqref{Green}-\eqref{Green-1} to the harmonic functions 
 $\nabla \phi$ in $B(0, R_0)^c$, we may show that for any $x\in B(0, R_0+2)^c$, 
 $$
 |\nabla \phi (x) |\le
 \left\{
 \aligned
 &  C |x|^{2-d} \|\nabla \phi \|_{L^\infty (B(0, R_0+1)\setminus B(0, R_0-1))} & \quad & \text{ if } d\ge 3,\\
 & C |x|^{-1} \|\nabla \phi \|_{L^\infty(B(0, R_0+1)\setminus B(0, R_0-1))} & \quad & \text{ if } d=2.
 \endaligned
 \right.
 $$
 Consequently, we obtain \eqref{N2-1} for any $x\in \Omega$.
\end{proof}

\begin{remark}\label{re-EN}

Let  $\phi\in C^1(\overline{\Omega})$ be a solution of the Neumann problem \eqref{NP1},
where $g_{ij} \in C^1(\partial\Omega)$ and $h \in C(\partial\Omega)$.
Assume that $\phi (x) \to 0$ as $|x|\to \infty$.
In the case $d=2$, we also assume that $\int_{\partial\Omega} h =0$.
Then, as in the case of bounded domains,
estimates \eqref{P1-0e}, \eqref{N2-0} and \eqref{N2-1} hold
if we add the term $C \| h \|_{L^\infty(\partial\Omega)}$ in the right-hand sides.

To see this, it suffices to show that there exists a harmonic function $v$ in $\Omega$ such that
$\frac{\partial v}{\partial n} =h$ on $\partial\Omega$, $v(x) \to  0$ as $|x|\to \infty$, and
$\| v\|_{L^\infty(\Omega)}\le C \| h \|_{L^\infty(\partial\Omega)}$.
To this end, we first note that there exists a harmonic function in $\Omega$ such that $\frac{\partial v}{\partial n} =h$ on $\partial\Omega$
and $\| (\nabla v)^*\|_{L^2(\partial\Omega)} \le C \| h \|_{L^2(\partial\Omega)}$.
If $d\ge 3$, the solution satisfies  $v(x)= O(x|^{2-d})$ as $|x|\to \infty$.
In the case $d=2$, using the assumption that $\int_{\partial \Omega} h =0$, we have
$ v(x) = O(|x|^{-1})$ as $|x|\to \infty$.
We refer the reader to \cite{Verchota-1984} for a proof in Lipschitz domains.
Furthermore, we note that in \cite{Verchota-1984},  the solution was constructed using
a single layer potential with a density $f$ satisfying $ \|  f \|_{L^2(\partial\Omega)} \le C \| h \|_{L^2(\partial\Omega)}$.
In the case $d=2$, we also have $\int_{\partial\Omega} f =0$.
As a consequence, we obtain  $\| (v)^* \|_{L^2(\partial\Omega)} \le C \| h \|_{L^2(\partial\Omega)}$.

Finally, we use the local estimates for Neumann problem on Lipschitz domains \cite{kenig-book}  to obtain
$$
\aligned
\| v \|_{L^\infty (\Omega_{r_0})}
 & \le C\left\{  \| v \|_{L^2(\Omega_{2r_0})}
+  \| h \|_{L^\infty(\partial\Omega)}\right\} \\
& \le C \left\{ \| (v)^* \|_{L^2(\partial\Omega)} + \| h \|_{L^\infty(\partial\Omega)}\right \}\\
& \le C \| h \|_{L^\infty(\partial\Omega)},
\endaligned
$$
where $\Omega_r = \{ x\in \Omega: \delta (x) < r\}$.
This, together with the maximum principle, yields  the desired estimate $|v(x) |\le C \| h \|_{L^\infty(\partial\Omega)}$ for any $x\in \Omega$.
\end{remark}

%%%%%%%%%%%%%%%%%%%%

\section{Estimates for the pressure}\label{section-P1}

In this section we establish the key estimates for the pressure.
We first deal with the case of bounded domains.
Recall that $\delta(x) =\text{\rm dist}(x, \partial\Omega)$.

\begin{thm}\label{thm-P1}
Let $\lambda\in \Sigma_\theta$ and $q>2$.
Let $(u, p)$ be a weak solution of \eqref{eq-0}, where
$F\in C_{0, \sigma}^\infty(\Omega)$ and $\Omega$ is a bounded Lipschitz domain.
 Suppose $u\in C^1(\overline{\Omega} ; \C^d)$ and $p \in C(\overline{\Omega}; \C)$.
Let  $B(x, 2r)\subset \Omega$.
Then
\begin{equation}\label{P-99}
\left(
\fint_{B(x, r)} |p-\fint_{B(x, r)}  p |^q \right)^{1/q}
\le C  M_q(|\nabla u|)(\delta(x) ),
\end{equation}
where $C$ depends only on $d$, $q$ and $\Omega$.
\end{thm}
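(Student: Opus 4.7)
The plan is to reduce the estimate to the Neumann-problem analysis of Section \ref{section-N}, applied to $p$ itself. Since $F\in C_{0,\sigma}^\infty(\Omega)$ is divergence-free and $\text{div}(u)=0$, taking the divergence of the equation $-\Delta u+\nabla p+\lambda u=F$ yields $\Delta p=0$ in $\Omega$. On $\partial\Omega$, the conditions $u=0$ and $F\cdot n=0$ reduce the equation to $\partial p/\partial n=n_i\Delta u_i$, and using $n_j\partial_j(\text{div}\,u)=0$ this rewrites as
\begin{equation*}
\frac{\partial p}{\partial n}=(n_i\partial_j-n_j\partial_i)\,\partial_j u_i\quad\text{on }\partial\Omega,
\end{equation*}
which is exactly the boundary condition in \eqref{NP0} with $g_{ij}=\partial_j u_i$.

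I would then apply Lemma \ref{lemma-N1} to $p$ with this choice of $g$, obtaining the pointwise gradient bound
\begin{equation*}
\delta(y)\,|\nabla p(y)|\le C\,M_q(|\nabla u|)(\delta(y))\quad\text{for every }y\in\Omega.
\end{equation*}
Since $B(x,2r)\subset\Omega$, we have $r\le\delta(x)/2$ and hence $\delta(y)\ge\delta(x)/2$ for every $y\in B(x,r)$. Combining this with the monotonicity of $M_q(g)(\cdot)$ in the scale parameter and the doubling property $M_q(g)(s)\approx M_q(g)(2s)$ noted in Section \ref{section-N}, one obtains
\begin{equation*}
\sup_{y\in B(x,r)}|\nabla p(y)|\le \frac{C}{\delta(x)}\,M_q(|\nabla u|)(\delta(x)).
\end{equation*}

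To conclude, Poincar\'e's inequality on $B(x,r)$ gives
\begin{equation*}
\left(\fint_{B(x,r)}\big|p-\fint_{B(x,r)}p\big|^q\right)^{1/q}\le Cr\left(\fint_{B(x,r)}|\nabla p|^q\right)^{1/q}\le C\,\frac{r}{\delta(x)}\,M_q(|\nabla u|)(\delta(x))\le C\,M_q(|\nabla u|)(\delta(x)),
\end{equation*}
where the last inequality uses $r\le\delta(x)/2$. The heavy lifting is contained in Section \ref{section-N}, so there is no real obstacle. The one genuine point is that a direct application of Theorem \ref{thm-N1} would only give $M_q(|\nabla u|)(r)$, which, since $M_q$ is non-increasing in $r$, is the \emph{wrong} direction compared with the desired $M_q(|\nabla u|)(\delta(x))$. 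This is precisely why harmonicity of $p$ must be exploited, via the pointwise gradient bound of Lemma \ref{lemma-N1}, thereby trading a scale-$r$ oscillation estimate for a scale-$\delta(x)$ one.
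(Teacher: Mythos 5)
Your reduction is the same one the paper uses: $\Delta p=0$ in $\Omega$, the Neumann datum $(n_i\partial_j-n_j\partial_i)\partial_j u_i$ obtained from $\mathrm{div}(u)=0$, and then the interior oscillation bound from Section \ref{section-N} at scale $\delta(x)$ followed by a Poincar\'e inequality. Your closing observation --- that the scale-$r$ statement \eqref{P3-0} goes the wrong way and one must instead exploit harmonicity of $p$ through the pointwise bound \eqref{N1-1} at scale $\delta(x)$ --- is exactly right and is what the paper actually does in \eqref{P-100}.

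The gap is in the step ``I would then apply Lemma \ref{lemma-N1} to $p$.'' Lemma \ref{lemma-N1} (via Lemma \ref{lemma-P1}) is proved for $\phi\in C^1(\overline{\Omega})$ a weak solution of \eqref{NP0} with $g_{ij}\in C^1(\partial\Omega)$, and its proof hinges on the integration by parts $\int_{\partial\Omega}\phi\,\frac{\partial v}{\partial n}=\int_{\partial\Omega}v\,\frac{\partial \phi}{\partial n}$, which requires $\nabla\phi$ to have boundary values. Under the stated hypotheses you only have $p\in C(\overline{\Omega})$ and $u\in C^1(\overline{\Omega})$: neither $\nabla p$ nor $\Delta u$ need extend to $\partial\Omega$, so the identity $\frac{\partial p}{\partial n}=n\cdot\Delta u-\lambda u\cdot n$ does not make classical (or directly usable weak) sense on $\partial\Omega$, and the hypotheses of the Section \ref{section-N} lemmas are not met. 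The paper's proof is devoted precisely to this point: it works on smooth subdomains $\Omega_\ell\nearrow\Omega$, where $(u,p)$ is smooth by interior regularity and $p$ solves the Neumann problem \eqref{NP1} classically with $g_{ij}=\partial_j u^i$ and the extra datum $h=-\lambda u\cdot n^\ell$ (which does \emph{not} vanish on $\partial\Omega_\ell$), obtains \eqref{P-100} with the additional term $C|\lambda|\|u\|_{L^\infty(\partial\Omega_\ell)}$, and then lets $\ell\to\infty$, using $u\in C^1(\overline{\Omega})$ and $u=0$ on $\partial\Omega$ both to kill that extra term and to pass $M_q^\ell(|\nabla u|)(\delta_\ell(x))\to M_q(|\nabla u|)(\delta(x))$. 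You need this approximation layer (or an equivalent justification of the weak Neumann formulation on $\partial\Omega$ itself) for the argument to be complete; everything else in your proposal is sound.
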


\begin{thm}\label{thm-P2}
Let $(u, p)$ be the same as in Theorem \ref{thm-P1}.
Suppose $x_0 \in \partial\Omega$ and $0< r<  r_0$.
Then
\begin{equation}\label{P-97}
\left(
\fint_{B(x_0, r)\cap \Omega} |p-\beta|^q \right)^{1/q}
+\left(\fint_{B(x_0, r)\cap \partial\Omega}
|p-\beta|^q \right)^{1/q} 
\le C M_q(|\nabla u|) (r),
\end{equation}
where $\beta=\fint_{B(x_0, r) \cap \partial\Omega} p$ and  $C$ depends only on $d$, $q$ and $\Omega$.
\end{thm}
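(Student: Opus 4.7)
The plan is to observe that $p$ is harmonic in $\Omega$ and satisfies a Neumann boundary condition precisely of the form treated in Section \ref{section-N}, so that Theorem \ref{thm-N1} applies directly. Taking the divergence of the first equation in \eqref{eq-0} and using both $\operatorname{div}(u)=0$ and $\operatorname{div}(F)=0$ gives $\Delta p = 0$ in $\Omega$; interior regularity for the Stokes system with $F \in C_{0,\sigma}^\infty(\Omega)$ makes $u$ and $p$ smooth on compact subsets of $\Omega$. To derive the boundary condition, I use that $F$ has compact support inside $\Omega$ and $u = 0$ on $\partial\Omega$, so that $\lambda u$ also vanishes there; the first equation then reduces near $\partial\Omega$ to $\nabla p = \Delta u$ componentwise, and consequently on $\partial\Omega$
\begin{equation*}
\frac{\partial p}{\partial n} = n_i \Delta u_i = n_i \partial_j \partial_j u_i - n_j \partial_j (\operatorname{div} u) = (n_i \partial_j - n_j \partial_i) \partial_j u_i,
\end{equation*}
where the added term $n_j \partial_j (\operatorname{div} u)$ is identically zero. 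Thus $p$ solves \eqref{NP0} with $g_{ij} = \partial_j u_i$.

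Applying Theorem \ref{thm-N1} with $\phi = p$ then yields \eqref{P-97} immediately, since $M_q(g)(r) = M_q(|\nabla u|)(r)$ by the definition \eqref{M-t} and the mean $\beta = \fint_{B(x_0, r) \cap \partial\Omega} p$ matches the one appearing in \eqref{P3-00}. The companion interior statement (Theorem \ref{thm-P1}) follows in the same way from \eqref{P3-0}, or more sharply from the pointwise estimate \eqref{N1-1} combined with a Poincar\'e inequality on $B(x, r)$ with $r \le \delta(x)/2$.

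The only delicate point is a regularity mismatch: Theorem \ref{thm-N1} is stated under $\phi \in C^1(\overline{\Omega})$ and $g_{ij} \in C^1(\partial\Omega)$, whereas here $u \in C^1(\overline{\Omega})$ provides only $g_{ij} = \partial_j u_i \in C(\partial\Omega)$, and the trace of $\nabla p$ on $\partial\Omega$ is defined through the identity $\nabla p = \Delta u$, which involves second derivatives of $u$. This is not a genuine obstruction, since the proof of Lemma \ref{lemma-P1} is by duality and uses $g$ only through its $L^q$ averages on $\partial\Omega$ and through a single tangential integration by parts against a Neumann test function $v$ with $(\nabla v)^{\ast} \in L^{q'}(\partial\Omega)$; both steps remain meaningful for $g_{ij} = \partial_j u_i$ continuous on $\partial\Omega$ once the boundary identity is interpreted in the weak formulation of the Stokes system, or alternatively by a standard approximation of $F$ followed by a limit using the a priori estimate \eqref{P-97} itself.
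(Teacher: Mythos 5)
Your reduction of the pressure estimate to the Neumann problem of Section \ref{section-N}, with $g_{ij}=\partial_j u^i$, is exactly the strategy of the paper, and you correctly flagged the regularity mismatch as the delicate point. But your resolution of that point is where the argument breaks down, and the way you set up the boundary condition hides the real difficulty. Under the hypotheses of Theorem \ref{thm-P1} ($u\in C^1(\overline{\Omega})$, $p\in C(\overline{\Omega})$, $\Omega$ Lipschitz), neither $\nabla p$ nor $\nabla^2 u$ has a trace on $\partial\Omega$, so the identity $\frac{\partial p}{\partial n}=(n_i\partial_j-n_j\partial_i)\partial_j u^i$ on $\partial\Omega$ --- and with it the claim that $p$ solves \eqref{NP0} in the sense required by Theorem \ref{thm-N1} --- is purely formal. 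Your first proposed fix (interpret it ``in the weak formulation'') is not carried out: the duality argument in Lemma \ref{lemma-P1} rests on the Green identity $\int_{\partial\Omega}\phi\,\frac{\partial v}{\partial n}=\int_{\partial\Omega}v\,\frac{\partial\phi}{\partial n}$, which already presupposes enough boundary regularity of $\phi=p$ to integrate by parts. Your second fix (approximate $F$) cannot work at all: smoothing $F$ does not improve the boundary behavior of $(u,p)$, because the obstruction is the Lipschitz boundary of $\Omega$, not the data.

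The paper's resolution is to approximate the domain rather than the data: take smooth $\Omega_\ell\nearrow\Omega$ with uniform Lipschitz characters. On $\partial\Omega_\ell$ interior regularity makes $(u,p)$ smooth, so the Neumann condition is a genuine pointwise identity there --- but it reads $\nabla p\cdot n^\ell=(n^\ell_i\partial_j-n^\ell_j\partial_i)\partial_j u^i-\lambda u\cdot n^\ell$, and the term $-\lambda u\cdot n^\ell$ does \emph{not} vanish on $\partial\Omega_\ell$. This is why the paper needs the variant \eqref{NP1} with the extra datum $h$ and Remark \ref{re-N}: the estimate on $\Omega_\ell$ carries an additional $C|\lambda|\,\|u\|_{L^\infty(\partial\Omega_\ell)}$, which tends to $0$ as $\ell\to\infty$ precisely because $u\in C^1(\overline{\Omega})$ and $u=0$ on $\partial\Omega$. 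Your direct computation on $\partial\Omega$ silently discards this term before it can be controlled. So the missing ingredient is the domain-approximation scheme, together with the convergence $M_q^\ell(|\nabla u|)(r)\to M_q(|\nabla u|)(r)$ and the convergence of the boundary averages in \eqref{P3-000} (for which the paper invokes the continuity of $p$ up to the boundary, and, in the rougher setting of Remark \ref{app-re}, the observation \eqref{B1-5}). Once that is in place, your computation of the principal part of the Neumann data is correct.
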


\begin{proof}

The proofs of these two theorems rely on the a priori estimates we established in Section \ref{section-N}  for the Neumann problem \eqref{NP1}.
We begin by choosing a sequence of smooth domains $\{ \Omega_\ell\}$ with uniform Lipschitz characters such that 
$\Omega_\ell \nearrow  \Omega$.
Let $n^\ell$ denote the outward unit normal to $\partial\Omega_\ell$.
Since $F\in C_{0, \sigma}^\infty(\Omega)$, we see that  $\Delta p=0$ in $\Omega$ and that 
$$
\nabla p \cdot n^\ell = \Delta u \cdot n^\ell -\lambda u \cdot n^\ell
$$
on $\partial\Omega_\ell$, if $\ell $ is sufficiently large. 
Moreover, since $\text{\rm div}(u)=0$ in $\Omega$, we have 
\begin{equation}
\Delta u \cdot n^\ell
=\Delta u^i \cdot n_i^\ell
=(n_i^\ell \partial_j - n_j^\ell \partial_i ) \partial_j u^i,
\end{equation}
where the repeated indices $i, j$  are summed from $1$ to $d$.
As a result, $p$ is a smooth solution of the Neumann problem \eqref{NP1} in $\Omega_\ell$ with
$$
g_{ij}= \partial_ju^i  \quad \text{ and } \quad h= -\lambda u \cdot n^\ell.
$$
It follows from  Theorem  \ref{thm-N1} and Remark \ref{re-N} that  if $B(x, 2 r) \subset \Omega_\ell $, then
\begin{equation}\label{P-100}
\left( \fint_{B(x, r)}
|p-\fint_{B(x, r)}  p |^q \right)^{1/q}
\le C M_q^\ell (|\nabla u|) (\delta_\ell (x))
+ C |\lambda| \| u \|_{L^\infty(\partial\Omega_\ell)},
\end{equation}
where $C$ is independent of $\ell$.
Here, the function $M_q^\ell $ is the analog of $M_q$ for the domain $\Omega^\ell$ and
$\delta_\ell (x) =\text{dist} (x, \partial\Omega_\ell)$.
Under the assumptions that 
$u\in C^1(\overline{\Omega}; \C^d)$ and $u=0$ on $\partial\Omega$,
we  see that   $M_q^\ell (|\nabla u|)(\delta_\ell(x)) \to M_q (|\nabla u|)(\delta(x))$ and
$\|u\|_{L^\infty(\partial\Omega_\ell)} \to 0$,  as $\ell \to\infty$.
Consequently, the estimate \eqref{P-99} follows from \eqref{P-100} by letting $\ell \to \infty$.

The proof of Theorem \ref{thm-P2} is similar.
By Theorem \ref{thm-N1} and Remark \ref{re-N}, 
 for $x_\ell \in \partial\Omega_\ell $ and $0<r< r_0$,
\begin{equation}\label{P3-000}
\aligned
 & \left(\fint_{B(x_\ell , r) \cap \partial\Omega_\ell }
| p  -\fint_{B(x_\ell , r) \cap \partial\Omega_\ell } p  |^q \right)^{1/q}
+
\left(\fint_{B(x_\ell , r) \cap \Omega_\ell}
| p  -\fint_{B(x_\ell , r) \cap\partial \Omega_\ell }  p  |^q \right)^{1/q}\\
& \le C  M_q^\ell (|\nabla u|) (r)  + C |\lambda| \| u \|_{L^\infty(\partial\Omega_\ell)}.
\endaligned
\end{equation}
Let $x_0\in  \partial\Omega$. Choose a sequence $\{ x_\ell \}  $ such that $x_\ell \in \partial\Omega_\ell $ and $x_\ell \to x_0$.
Using the assumption $p\in C(\overline{\Omega})$, a simple limiting argument yields \eqref{P-97}.
\end{proof}

\begin{remark}\label{app-re}
Without the smoothness assumptions that $u\in C^1(\overline{\Omega}, \C^d)$ and $p \in C(\overline{\Omega}; \C)$,
the estimates in Theorems \ref{thm-P1} and \ref{thm-P2} continue to hold if  $q>d$ and $\Omega$ is a bounded $C^1$ domain in $\R^d$ for $d\ge 3$, or
if $2< q< 2+\e$ and $\Omega$ is a bounded Lipschitz domain in $\R^2$.
To see this, note that $u \in W^{1, q}(\Omega; \C^d)$ for $q>d$ implies that $u$ is continuous up to the boundary.
As a result, $\| u\|_{L^\infty(\partial \Omega_\ell) } \to 0$.
Also, by using  \eqref{B1-5}, we may show that  $M_q^\ell (|\nabla u|) (\delta_\ell (x)) \to M_q(|\nabla u|)(\delta(x)$ and
the left-hand side of \eqref{P3-000} converges to the left-hand side of \eqref{P-97}.
\end{remark}

Next, we treat  the case of exterior domains.

\begin{thm}\label{thm-P1e}
Let $\lambda\in \Sigma_\theta$ and $q\ge 2$.
Let $(u, p)$ be a weak solution of \eqref{eq-0}, where
$F\in C_{0, \sigma}^\infty(\Omega)$ and $\Omega$ is an exterior  Lipschitz domain.
Suppose $u\in C^1(\overline{\Omega} ; \C^d)$,  $p \in C(\overline{\Omega}; \C)$, 
and $p(x) =O(1)$ as $|x|\to \infty$.
Let  $B(x, 2r)\subset \Omega$.
Then
\begin{equation}\label{P-99e}
\left(
\fint_{B(x, r)} |p-\fint_{B(x, r)}  p |^q \right)^{1/q}
\le C  \left\{ M_q(|\nabla u|) (\delta (x)) + \|\nabla u \|_{L^q(\partial\Omega)} \right\},
\end{equation}
where $C$ depends only on $d$, $q$ and $\Omega$.
\end{thm}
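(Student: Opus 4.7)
The plan is to mirror the proof of Theorem \ref{thm-P1}, using the exterior Neumann estimates of Theorem \ref{thm-N2} and Remark \ref{re-EN} in place of Theorem \ref{thm-N1}. Since $F\in C_{0,\sigma}^\infty(\Omega)$, taking the divergence of \eqref{eq-0} gives $\Delta p=0$ in $\Omega$; as $p$ is bounded and harmonic, Lemma \ref{lemma-Ne0} provides $\beta\in\C$ with $p(x)-\beta\to 0$ as $|x|\to\infty$. I will then exhaust $\Omega$ by smooth exterior Lipschitz domains $\Omega_\ell\nearrow\Omega$ of uniform Lipschitz character, chosen so that $\text{\rm supp}(F)\subset\Omega_\ell$ for all large $\ell$. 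Taking the normal component of \eqref{eq-0} on $\partial\Omega_\ell$, and using $\text{\rm div}(u)=0$ to rewrite $\Delta u\cdot n^\ell = (n_i^\ell \partial_j - n_j^\ell\partial_i)\partial_j u^i$, one sees that $p-\beta$ solves the Neumann problem \eqref{NP1} in $\Omega_\ell$ with $g_{ij}^\ell = \partial_j u^i|_{\partial\Omega_\ell}$, $h^\ell = -\lambda u\cdot n^\ell|_{\partial\Omega_\ell}$, and vanishes at infinity.

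When $d=2$, Remark \ref{re-EN} requires $\int_{\partial\Omega_\ell} h^\ell=0$. Since $\text{\rm div}(u)=0$ in the bounded shell $\Omega\setminus\overline{\Omega_\ell}$ and $u=0$ on $\partial\Omega$, the divergence theorem gives $\int_{\partial\Omega_\ell} u\cdot n^\ell = 0$ as required. Applying the exterior gradient estimate \eqref{N2-1} (amended as in Remark \ref{re-EN}) to $p-\beta$ in $\Omega_\ell$ then yields, for every $y\in\Omega_\ell$,
\[
\delta_\ell(y)|\nabla p(y)| \le C\bigl\{ M_q^\ell(|\nabla u|)(\delta_\ell(y)) + \|\nabla u\|_{L^q(\partial\Omega_\ell)} + \|h^\ell\|_{L^\infty(\partial\Omega_\ell)} \bigr\},
\]
with $C$ independent of $\ell$. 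Because $u\in C^1(\overline{\Omega})$ vanishes on $\partial\Omega$, letting $\ell\to\infty$ gives $\|h^\ell\|_{L^\infty(\partial\Omega_\ell)}=|\lambda|\|u\|_{L^\infty(\partial\Omega_\ell)}\to 0$, $\|\nabla u\|_{L^q(\partial\Omega_\ell)}\to\|\nabla u\|_{L^q(\partial\Omega)}$, and $M_q^\ell(|\nabla u|)(s)\to M_q(|\nabla u|)(s)$ for each $s>0$.

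To finish, fix $B(x,2r)\subset\Omega$ and take $\ell$ large enough that $B(x,2r)\subset\Omega_\ell$ and $\delta_\ell(y)\ge \delta(x)/2$ for all $y\in B(x,r)$. The monotonicity of $M_q^\ell$ combined with Poincar\'e's inequality on $B(x,r)$ and $r\le\delta(x)/2$ yields the uniform-in-$\ell$ bound
\[
\left(\fint_{B(x,r)} |p-\fint_{B(x,r)} p|^q\right)^{1/q} \le Cr\sup_{B(x,r)}|\nabla p| \le C\bigl\{ M_q^\ell(|\nabla u|)(\delta(x)/2) + \|\nabla u\|_{L^q(\partial\Omega_\ell)} + \|h^\ell\|_{L^\infty(\partial\Omega_\ell)} \bigr\}.
\]
Sending $\ell\to\infty$ and invoking the quasi-doubling $M_q(|\nabla u|)(\delta(x)/2)\approx M_q(|\nabla u|)(\delta(x))$ will produce \eqref{P-99e}. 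The main difficulty I anticipate is the construction of $\Omega_\ell$ with uniformly bounded Lipschitz character and the careful verification that the three boundary functionals converge to their natural limits as $\ell\to\infty$: this parallels Remark \ref{app-re} for the bounded case, but the unboundedness of $\Omega$ together with the compatibility condition in $d=2$ will require extra care.
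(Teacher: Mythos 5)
Your proposal is correct and follows essentially the same approach as the paper: reduce to $p\to 0$ at infinity via Lemma \ref{lemma-Ne0}, approximate by smooth exterior domains $\Omega_\ell\nearrow\Omega$, apply the exterior Neumann gradient estimate \eqref{N2-1} (amended as in Remark \ref{re-EN}) with $g^\ell_{ij}=\partial_j u^i$ and $h^\ell=-\lambda u\cdot n^\ell$, verify the $d=2$ compatibility condition $\int_{\partial\Omega_\ell}u\cdot n^\ell=0$ by the divergence theorem on the shell $\Omega\setminus\overline{\Omega_\ell}$, pass from the gradient bound to the interior oscillation bound by Poincar\'e, and send $\ell\to\infty$. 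This matches the paper's (terse) proof, which derives \eqref{P-99e} from \eqref{N2-1} and Remark \ref{re-EN} by the same approximation argument used in the bounded case.
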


\begin{thm}\label{thm-P2e}
Let $(u, p)$ be the same as in Theorem \ref{thm-P1e}.
Suppose $x_0 \in \partial\Omega$ and $0< r<  r_0$.
Then
\begin{equation}\label{P-98}
\left(
\fint_{B(x_0, r)\cap \Omega} |p-\beta|^q \right)^{1/q}
+\left(\fint_{B(x_0, r)\cap \partial\Omega}
|p-\beta|^q \right)^{1/q} 
\le C  M_q(|\nabla u|)(r),
\end{equation}
where $\beta=\fint_{B(x_0, r) \cap \partial\Omega} p$ and  $C$ depends only on $d$, $q$ and $\Omega$.
\end{thm}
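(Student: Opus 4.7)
The plan is to mirror the approximation argument used in the proof of Theorem \ref{thm-P2}, but now substituting the exterior-domain Neumann estimates from Section \ref{section-Ne} (Theorem \ref{thm-N2} together with Remark \ref{re-EN}) for their bounded-domain analogs. Two additional ingredients must be installed: a normalization of $p$ at infinity so that the hypothesis ``$\phi(x)\to 0$ as $|x|\to\infty$'' of the exterior Neumann estimates applies, and the verification of the compatibility condition $\int_{\partial\Omega_\ell}h=0$ required by Remark \ref{re-EN} in two dimensions.

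First I would handle the normalization: since $F\in C_{0,\sigma}^\infty(\Omega)$ has compact support, the pressure $p$ is harmonic in $\Omega$, and the hypothesis $p=O(1)$ combined with Lemma \ref{lemma-Ne0} produces a constant $\beta_\infty\in\C$ with $p(x)\to\beta_\infty$ as $|x|\to\infty$. Replacing $p$ by $\tilde p=p-\beta_\infty$ changes neither side of \eqref{P-98}, so we may assume $p\to 0$ at infinity. Next, I would approximate $\Omega$ from the inside by a sequence $\{\Omega_\ell\}$ of smooth exterior domains of uniform Lipschitz character with $\Omega_\ell\subset\Omega$ and $\Omega_\ell\nearrow\Omega$, letting $n^\ell$ denote the outward unit normal to $\Omega_\ell$. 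On $\Omega_\ell$, the identity $\Delta u\cdot n^\ell=(n^\ell_i\partial_j-n^\ell_j\partial_i)\partial_j u^i$, valid because $\mathrm{div}(u)=0$, combined with \eqref{eq-0} shows that $p$ satisfies \eqref{NP1} with
\[
g_{ij}=\partial_j u^i,\qquad h=-\lambda\, u\cdot n^\ell.
\]
Applying the divergence theorem to $\mathrm{div}(u)=0$ on the bounded subdomain $\Omega\setminus\Omega_\ell\subset\Omega$ and using $u=0$ on $\partial\Omega$ gives $\int_{\partial\Omega_\ell} u\cdot n^\ell=0$, hence $\int_{\partial\Omega_\ell} h=0$, which supplies the planar compatibility condition.

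With these preparations in place, Theorem \ref{thm-N2} together with Remark \ref{re-EN} yields, for every $x_\ell\in\partial\Omega_\ell$ and $0<r<r_0$, the estimate
\[
\Bigl(\fint_{B(x_\ell,r)\cap\partial\Omega_\ell}|p-\beta_\ell|^q\Bigr)^{1/q}+\Bigl(\fint_{B(x_\ell,r)\cap\Omega_\ell}|p-\beta_\ell|^q\Bigr)^{1/q}\le C\bigl\{M_q^\ell(|\nabla u|)(r)+|\lambda|\,\|u\|_{L^\infty(\partial\Omega_\ell)}\bigr\},
\]
where $\beta_\ell=\fint_{B(x_\ell,r)\cap\partial\Omega_\ell}p$ and $M_q^\ell$ is the analog of $M_q$ associated with $\partial\Omega_\ell$. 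Choosing $x_\ell\in\partial\Omega_\ell$ with $x_\ell\to x_0\in\partial\Omega$ and letting $\ell\to\infty$, the continuity of $u$ on $\overline\Omega$ together with $u|_{\partial\Omega}=0$ forces $\|u\|_{L^\infty(\partial\Omega_\ell)}\to 0$; the continuity of $p$ up to $\partial\Omega$ ensures convergence of the averages $\beta_\ell\to\beta$ and of the two left-hand side integrals; and a localization argument parallel to that mentioned in Remark \ref{app-re} yields $M_q^\ell(|\nabla u|)(r)\to M_q(|\nabla u|)(r)$. Passing to the limit delivers \eqref{P-98}.

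The main obstacle is constructing the approximating sequence $\{\Omega_\ell\}$ with a Lipschitz character bounded uniformly in $\ell$, so that the constant $C$ in Theorem \ref{thm-N2} is independent of $\ell$, and simultaneously ensuring that both the boundary maximal functions $M_q^\ell(|\nabla u|)(r)$ and the surface averages over $B(x_\ell,r)\cap\partial\Omega_\ell$ converge to their $\Omega$-analogs. The natural route is to flatten $\partial\Omega$ locally via its Lipschitz graph representations, mollify each defining function $\psi$ to obtain smooth $\psi_\ell$ with $\|\nabla'\psi_\ell\|_\infty\le\|\nabla'\psi\|_\infty+o(1)$ chosen so that $\Omega_\ell\subset\Omega$, and then glue by a partition of unity; the uniform continuity of $\nabla u$ on $\overline\Omega$ then supplies the required convergence. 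Once the construction is in place, the limiting procedure above completes the proof.
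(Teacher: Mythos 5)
Your proposal is correct and follows essentially the same route as the paper: normalize $p$ at infinity via Lemma \ref{lemma-Ne0}, identify $p$ as a solution of the Neumann problem \eqref{NP1} on approximating smooth exterior domains with $g_{ij}=\partial_j u^i$ and $h=-\lambda u\cdot n^\ell$, verify the two-dimensional compatibility condition $\int_{\partial\Omega_\ell}u\cdot n^\ell=0$ by the divergence theorem on the bounded region $\Omega\setminus\overline{\Omega_\ell}$, and pass to the limit using the uniform Lipschitz character and the continuity of $u$, $\nabla u$, and $p$ up to the boundary. The only cosmetic remark is that the boundary-average half of your displayed inequality comes from Lemma \ref{lemma-P1e} (estimate \eqref{P1-0e}) rather than from Theorem \ref{thm-N2}, which supplies only the solid average, but this does not affect the argument.
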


\begin{proof}

To prove Theorems \ref{thm-P1e} and \ref{thm-P2e}, we first note that 
since $p$ is harmonic in $\Omega$, the condition $p(x)=O(1)$ at infinity implies 
that $ p(x) \to \beta$ for some $\beta \in \C$, as $|x| \to \infty$.
By replacing $p$ with $p-\beta$, we may assume that $p(x) =o(1)$
at infinity. This allows us to apply estimates obtained in Section \ref{section-Ne} for the exterior domain.
As a result, estimates \eqref{P-99e}-\eqref{P-98} follow from \eqref{P1-0e}, 
\eqref{N2-0},  \eqref{N2-1} and Remark \ref{re-EN} by an approximation argument, as in the case of bounded domains.
We should point out that since div$(u)=0$ in $\Omega$ and $u=0$ on $\partial\Omega$, the condition
$$
\int_{\partial\Omega^\ell} u\cdot n^\ell=0
$$
is satisfied for $d\ge 2$.
This follows by applying the divergence theorem in the bounded domain $\Omega \setminus \overline{\Omega_\ell}$.
\end{proof}

%%%%%%%%%%%%%%%%%%%%%%%%

\section{Proof of Theorem \ref{main-1} for $d\ge 3$ }\label{section-M1}

Let $\Omega$ be a bounded $C^{1}$ domain and $\lambda\in \Sigma_\theta$.
We may assume $|\lambda|> C r_0^{-2}$, as the case $|\lambda| \le C r_0^{-2}$ is given by \eqref{V5-3}, which follows from the $L^2$ 
resolvent estimate and local regularity estimates in $C^1$ domains.
We first consider the case $F\in C_{0, \sigma}^\infty(\Omega)$.
Note that by Lemma \ref{lemma-C-2}, 
\begin{equation}\label{main-B1}
M_q (|\nabla u|) (r)\le C \left\{ |\lambda|^{1/2} \| u \|_{L^\infty(\Omega)} + |\lambda|^{-1/2}  \| F \|_{L^\infty(\Omega)} \right\}, 
\end{equation}
for any $2\le q< \infty$ and  $|\lambda|^{-1/2}\le r< r_0$.
 This, together with  \eqref{V5-00}, \eqref{P-99} and \eqref{P-97} with $q=2d$ as well as Remark \ref{app-re}, shows   that for any $N\ge 2$,
\begin{equation}\label{M-1}
\aligned
  |\lambda| \| u \|_{L^\infty (\Omega)}
   \le C_0N^{{4d}+8} \| F \|_{L^\infty (\Omega)}
 + C_0 N^{-1/2} |\lambda | \| u \|_{L^\infty(\Omega)}, 
 \endaligned
 \end{equation}
 where $C_0$ depends only on $d$ and   $\Omega$.
 By choosing $N$ so large that $C_0N^{-1/2} \le (1/2)$, we obtain \eqref{est-0}.

  The estimates for the general case $F\in L^\infty_\sigma(\Omega)$ follow  by an approximation argument, as in \cite{AG-2013, AG-2014}.
Let $(u, p)$ be the energy solution of \eqref{eq-0} with $\int_\Omega p=0$.
Choosing a sequence of functions $\{ F_\ell \}$ in $C^\infty_{0,\sigma}(\Omega)$ such that
$F_\ell \to F$ a.e. in $\Omega$ and $\|F_\ell \|_{L^\infty(\Omega)} 
\le C \| F \|_{L^\infty(\Omega)}$.
Let $(u_\ell, p_\ell)$ be the solution of \eqref{eq-0} with $F_\ell $ in the place of $F$ and with $\int_\Omega p_\ell=0$.
 Then
 \begin{equation}\label{B-1}
 |\lambda|  \| u_\ell \|_{L^\infty(\Omega)}
 \le C \| F \|_{L^\infty(\Omega)},
 \end{equation}
 Note that $\| F_\ell -F \|_{L^2(\Omega)} \to 0$ by the dominated convergence theorem.
 It  follows from the $L^2$ resolvent estimate that 
 $u_\ell \to u$ in $W^{1, 2}(\Omega, \C^d)$ and 
 $p_\ell \to p$ in $L^2(\Omega)$.
 This implies that there exists a subsequence, still denoted by $\{( u_\ell, p_\ell)  \}$, such that
 $u_\ell \to u$, $\nabla u_\ell \to \nabla u$ and $p_\ell \to p$ a.e. in $\Omega$.
 As a result, by taking the limits in \eqref{B-1}, we obtain \eqref{est-0} for any $F\in L^\infty_\sigma(\Omega)$.
This gives the existence in Theorem \ref{main-1}.
Since $L^\infty_\sigma (\Omega)\subset L^2(\Omega; \C^d)$, the uniqueness follows from the uniqueness for the case $q=2$.

If $\Omega$ is $C^{1, \alpha}$ for some $\alpha>0$,
the estimate \eqref{est-1} for $\nabla u$ follows from \eqref{est-0} and the local Lipschitz estimate for the Stokes equations.
See Remark \ref{re-smooth}.

%%%%%%%%%%%%%%%%%%

\section{Proof of Theorem \ref{main-2} for $d\ge 3$}\label{section-M2}

Throughout this section we assume that $\Omega$ is an exterior $C^1$  domain in $\R^d$.

\begin{lemma}\label{lemma-ed1}
Let $(u, p)$ be a weak solution of \eqref{eq-0} with
 $F\in C_{0,\sigma}^\infty(\Omega)$.
 Then $|\nabla p(x)| = O(|x|^{-d+\e})$ as $|x|\to \infty$, for any $\e>0$.
 Consequently, $p(x) \to \beta$ as $|x|\to \infty$ for some $\beta \in \C$.
\end{lemma}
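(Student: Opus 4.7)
The plan exploits three ingredients: the harmonicity of $p$ on $\Omega$, exponential decay at infinity for solutions of the Helmholtz equation $(-\Delta+\lambda)v = 0$ with $v \in L^2$, and a flux constraint from $\mathrm{div}(u) = 0$ that kills the slowest-decaying multipole of $\nabla p$. Since $F \in C_{0,\sigma}^\infty(\Omega)$ forces $\mathrm{div}(F) = 0$, taking the divergence of the resolvent equation together with $\mathrm{div}(u) = 0$ yields $\Delta p = 0$ in $\Omega$. The energy estimate \eqref{e-est} places $u$ in $W^{1,2}_0(\Omega; \C^d) \cap L^2(\Omega)$, and local Stokes regularity together with the $L^q$ resolvent estimates of \cite{GS-2023} give $\nabla p \in L^2(\Omega)$ and smoothness of $(u,p)$ throughout $\Omega$.

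Fix $R_0$ with $\Omega^c \cup \mathrm{supp}(F) \subset B(0, R_0/2)$. In $\{|x|>R_0\}$ the momentum equation is homogeneous. Applying the antisymmetric derivative $\partial_i(\cdot)_j - \partial_j(\cdot)_i$ eliminates $\nabla p$, so $\omega_{ij} = \partial_i u_j - \partial_j u_i$ satisfies the Helmholtz-type equation $(-\Delta+\lambda)\omega_{ij} = 0$ there. A spherical-harmonic expansion reduces each angular mode to a modified Bessel equation, and the $L^2$-at-infinity condition on $\omega$ selects only the exponentially decaying solutions $K_\nu(\sqrt{\lambda}\, r)$ (since $\mathrm{Re}\sqrt{\lambda}>0$ for $\lambda\in\Sigma_\theta$), giving $|\omega(x)|\le Ce^{-c|x|}$. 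Because $\Delta p = 0$, a direct computation shows that $u_{\mathrm{hom}} := u + \nabla p/\lambda$ also satisfies $(-\Delta+\lambda)u_{\mathrm{hom}}=0$ in $\{|x|>R_0\}$; since $u,\nabla p\in L^2$ the same Helmholtz argument yields $|u_{\mathrm{hom}}(x)|\le Ce^{-c|x|}$. Consequently, $u(x) = -\nabla p(x)/\lambda + O(e^{-c|x|})$ outside the large ball.

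Now the divergence theorem applied to $\Omega \cap B_R$ for $R>R_0$, together with $u|_{\partial\Omega}=0$ and $\mathrm{div}(u)=0$, gives $\int_{\partial B_R} u\cdot n\,d\sigma = 0$ for every such $R$. Substituting the expansion of $u$, using the exponential smallness of $u_{\mathrm{hom}}$ and letting $R\to\infty$, the leading monopole term $-c(d-2)\, x|x|^{-d}$ of $\nabla p$ (arising from the harmonic mode $p \sim c |x|^{2-d}$) must have vanishing flux across large spheres, which forces $c=0$. The next-order (dipole) term then yields $|\nabla p(x)| = O(|x|^{-d})$ as $|x|\to\infty$, which is even stronger than the stated rate $O(|x|^{-d+\varepsilon})$. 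The consequence $p(x)\to\beta$ for some $\beta\in\C$ follows immediately by integrating $\nabla p$ along any radial ray from $|x|=R_0$ to infinity, since $|x|^{-d}$ is integrable at infinity for $d\ge 2$; harmonicity of $p$ together with the oscillation estimate $|p(x)-p(y)|\le C R \cdot R^{-d}$ on spheres $|x|=|y|=R$ ensures the limit does not depend on the ray.

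The main obstacle will be the rigorous justification of the spherical-harmonic/modified-Bessel expansion for the vector-valued Helmholtz problems satisfied by $\omega$ and $u_{\mathrm{hom}}$, together with proper accounting of the Cauchy data on $\partial B_{R_0}$ rather than on $\partial\Omega$. A more self-contained alternative avoids these expansions by representing $\omega$ and $u_{\mathrm{hom}}$ via the exterior Green's function for $-\Delta+\lambda$ in $\R^d$, whose kernel decays exponentially; Green's identity in the annulus $R_0<|x|<R$ with $R\to\infty$ then delivers the exponential decay of both quantities directly.
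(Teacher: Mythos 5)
Your route is genuinely different from the paper's and, once one soft spot is repaired, it works and even gives a slightly stronger rate. The paper argues as follows: multiply $(u,p)$ by a cut-off supported away from $\partial\Omega$, view the result as a solution of the generalized resolvent problem in $\R^d$, and invoke the whole-space estimates (Theorem \ref{R-thm-1}) to conclude $\nabla p\in L^q(B(0,3R)^c)$ for \emph{every} $q>1$; since $p$ is harmonic, the mean value property on $B(x,|x|/2)$ then yields $|\nabla p(x)|\le C|x|^{-d/q}\|\nabla p\|_{L^q}$, i.e.\ $O(|x|^{-d+\e})$, and $p\to\beta$ follows from Lemma \ref{lemma-Ne0}. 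You instead exploit that $(-\Delta+\lambda)$ has an exponentially decaying fundamental solution for $\lambda\in\Sigma_\theta$: the vorticity and $u_{\mathrm{hom}}=u+\lambda^{-1}\nabla p$ solve the homogeneous Helmholtz equation outside a large ball, decay exponentially, and the flux identity $\int_{\partial B_R}u\cdot n=0$ forces the monopole of the harmonic function $p$ to vanish, giving the stronger $O(|x|^{-d})$. That is a nice self-contained argument (your Green's-representation alternative for the exponential decay is the right way to make it rigorous, since the surface integrals over $\partial B_{R_j}$ can be made to vanish along a subsequence using only square-integrability).

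The soft spot: your Helmholtz argument for $u_{\mathrm{hom}}$ takes $\nabla p\in L^2$ near infinity as an input, and your one-line justification (``local Stokes regularity together with the $L^q$ resolvent estimates'') does not actually deliver it — the exterior estimates quoted in the paper only give $p\in L^q_{\loc}$, and global integrability of $\nabla p$ at infinity is precisely what the paper's cut-off step is designed to produce. Two clean fixes: either perform the same cut-off reduction to $\R^d$ (at which point you might as well finish as the paper does), or — better, staying entirely within your framework — observe that $\mathrm{div}(u)=0$ gives $\Delta u_j=\partial_i\omega_{ij}$, hence $\lambda\, u_{\mathrm{hom},j}=\partial_i\omega_{ij}$ outside $\mathrm{supp}(F)$. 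Since $\omega\in L^2$ needs only the energy estimate \eqref{e-est}, its exponential decay (and that of $\nabla\omega$, by interior estimates) gives the exponential decay of $u_{\mathrm{hom}}$ directly, with no a priori information on $\nabla p$ required; the decay of $\nabla p=\lambda(u_{\mathrm{hom}}-u)$ is then an output rather than an input. With that modification your proof is complete and yields $|\nabla p(x)|=O(|x|^{-d})$.
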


\begin{proof}
Since $F\in L^q(\Omega; \C^d)$ for any $q>1$, 
it follows from the $L^q$ resolvent estimates in $C^1$ domains \cite{GS-2023} that
$u\in W^{1, q}_0(\Omega; \C^d)$ and $p\in L^q_{loc}(\overline{\Omega}; \C)$ for any $q>1$.
Fix $R\ge 1$ so that $\Omega^c \subset B(0, R) $.
Choose $\varphi\in C^\infty(\R^d)$ such that $\varphi=1$ in $B(0, 3R)^c$ and $\varphi =0$ in $B(0, 2R)$.
Note that 
$$
\left\{
\aligned
-\Delta (u \varphi) + \nabla (p \varphi) + \lambda (u \varphi) & =h,\\
\text{\rm div}(u\varphi) & =g
\endaligned
\right.
$$
in $\R^d$, where $g, h$ are given by \eqref{L-14}.
By the generalized  $L^q$ resolvent estimates in $\R^d$ \cite{Sohr-1994}, we see that $\nabla (p \varphi)  \in L^q(\R^d; \C^d )$ for any $q>1$.
It follows that  $\nabla p \in L^q(B(0, 3R)^c; \C^d)$.
Since $p$ is harmonic in $\Omega$, by the mean value property, we obtain
$$
|\nabla p(x) |
\le \left( \fint_{B(x, r)} |\nabla p|^q \right)^{1/q}
\le C r^{-d/q} \| \nabla p \|_{L^q(B(0, 3R)^c)},
$$
as long as  $B(x, r) \subset B(0, 3R)^c$.
This implies that $|\nabla p(x)| = O(|x|^{-d + \e})$ as $|x|\to \infty$ for any $\e>0$.
Consequently, by applying the fundamental theorem of calculus, it follows  that 
$p$ is bounded at $\infty$.
In view of  Lemma \ref{lemma-Ne0}, we further deduce that $p(x) \to \beta$
as $|x| \to \infty$ for some $\beta \in \C$.
\end{proof}

\begin{lemma}\label{lemma-ed2}
Let $u\in W^{1, 2}_{\loc}(\overline{\Omega}; \C^d)$ and $p\in L^2_{\loc}(\overline{\Omega}; \C)$.
Suppose $(u, p)$ satisfies \eqref{eq-0} in the sense of distributions for some $F\in L^\infty_\sigma (\Omega)$.
Also assume that $|\nabla p(x) |= O(|x|^{-1})$ as $|x|\to \infty$.
Then $p(x) \to \beta $  for some $\beta \in \C$ as $|x|\to \infty$.
\end{lemma}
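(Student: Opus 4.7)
My plan is to first show that $p$ is harmonic in $\Omega$, then bootstrap the gradient decay $|\nabla p|=O(|x|^{-1})$ into uniform boundedness of $p$ at infinity, and finally invoke Lemma~\ref{lemma-Ne0} to identify the limit. For harmonicity, I take the distributional divergence of the first equation in \eqref{eq-0}: using $\operatorname{div} F=0$ (from the definition of $L^\infty_\sigma(\Omega)$) together with $\operatorname{div} u=0$, one gets $\Delta p=0$ in $\Omega$, and since $p\in L^2_{\loc}(\overline{\Omega};\C)$, elliptic regularity then makes $p$ classically harmonic in $\Omega$.

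Fix $R_0$ so that $\Omega^c\subset B(0,R_0)$, and decompose $p(x)=\bar p(|x|)+q(x)$, where $\bar p(r)=\fint_{|x|=r} p\,d\sigma$ is the spherical average. Since $p$ is harmonic, $\bar p$ solves the radial ODE $\bar p''+\tfrac{d-1}{r}\bar p'=0$ on $(R_0,\infty)$, so $\bar p(r)=a+b\,r^{2-d}$, which is bounded for $d\ge 3$. The complementary piece $q$ is then harmonic on $\{|x|>R_0\}$, has vanishing spherical mean, and inherits $|\nabla q|\le|\nabla p|+|\bar p'(|x|)|=O(|x|^{-1})$. The scaled Poincar\'e inequality on the sphere of radius $r$ (whose first nonzero spherical eigenvalue is $(d-1)/r^2$) yields $\fint_{|x|=r}|q|^2\,d\sigma\le \tfrac{r^2}{d-1}\fint_{|x|=r}|\nabla_{\tan} q|^2\,d\sigma = O(1)$. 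For $|x|$ large, the ball $B(x,|x|/4)$ sits inside $\{|y|>R_0\}$, and combining the mean value property for the harmonic $q$ with Fubini and the spherical $L^2$ estimate above produces $|q(x)|\le\bigl(\fint_{B(x,|x|/4)}|q|^2\bigr)^{1/2}=O(1)$. Hence $|p(x)|=O(1)$ as $|x|\to\infty$.

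Since $p$ is harmonic in the exterior Lipschitz domain $\Omega$ and $p=O(1)$ at infinity, Lemma~\ref{lemma-Ne0} now applies and gives $p(x)=\beta+O(|x|^{2-d})$ for some $\beta\in\C$, so in particular $p(x)\to\beta$. The main obstacle is that a direct radial integration of $|\nabla p|=O(|x|^{-1})$ yields only $|p|=O(\log|x|)$, which is not enough to trigger Lemma~\ref{lemma-Ne0} for $d\ge 3$. The splitting into a radial average and a zero-mean complement is what buys the extra decay: the radial part is rigidly of the form $a+br^{2-d}$ by ODE analysis, while the zero-mean part enjoys the $r^2$ factor in the spherical Poincar\'e inequality, which exactly absorbs the $r^{-2}$ coming from $|\nabla q|^2$ and gives $O(1)$ control of $|q|$ rather than $O(\log r)$.
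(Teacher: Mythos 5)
Your proof is correct, but it takes a genuinely different route from the paper's. The key step in both is showing $p = O(1)$ at infinity, after which Lemma~\ref{lemma-Ne0} finishes; the difference is in how the spherical average of $p$ is controlled. The paper exploits the Stokes structure directly: since $F$ and $u$ are divergence-free with vanishing normal components on $\partial\Omega$, and since $\Delta u \cdot n = (n_i\partial_j - n_j\partial_i)\partial_j u^i$ is a tangential derivative, one obtains the flux identity $\int_{\partial B(0,R)}\nabla p\cdot n = 0$, so that $\bar p(R) = \fint_{\partial B(0,R)} p$ is \emph{exactly constant}; an annulus Poincar\'e inequality together with the gradient decay then gives $p = O(1)$. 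You instead discard the PDE after establishing harmonicity of $p$, using only that the spherical average satisfies the radial ODE $\bar p'' + \tfrac{d-1}{r}\bar p' = 0$, hence $\bar p(r) = a + br^{2-d}$, which is automatically bounded when $d\ge 3$; you then control the zero-mean complement $q$ via the spherical Poincar\'e inequality, which supplies precisely the $r^2$ factor needed to absorb $|\nabla q|^2 = O(r^{-2})$. Your route is more elementary and applies to any harmonic function in an exterior domain with $O(|x|^{-1})$ gradient decay when $d\ge 3$; the paper's flux identity, by contrast, kills the $\log r$ growth of $\bar p$ even in $d=2$ (where the gradient decay alone does not force $b=0$), which matters since Theorem~\ref{main-2} also covers exterior Lipschitz domains in $\R^2$. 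Since Lemma~\ref{lemma-ed2} is applied in the $d\ge 3$ part of the paper, your argument suffices there, but it is worth being aware that it does not extend to the planar case the way the paper's does.
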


\begin{proof}
Since div$(F)=\text{\rm div}(u) =0$ in $\Omega$, it follows that $p$ is harmonic in $\Omega$.
Also note that if $\Omega^c \subset B(0, R-1)$, then
$$
\int_{\partial B(0, R)} \nabla p \cdot n 
=\int_{\partial B(0, R)}
\left( \Delta u \cdot n -\lambda u \cdot n + F\cdot n\right) =0, 
$$
where we have used the observation $\Delta u \cdot n =(n_i\partial_j -n_j \partial_i) \partial_j u^i$.
Let
$$
\phi (R) = \fint_{\partial B(0, R)}  p =\fint_{\partial B(0, 1)} p (R\omega)\, d\omega
$$
for $R$ large. Since
$$
\phi^\prime (R) =\fint_{\partial B(0, R)} \nabla p \cdot n =0, 
$$
we deduce that $\phi (R)$ is constant.
This implies that
$$
\fint_{B(0, 2R)\setminus B(0, R)} p 
$$
is constant for $R$ large.  Using 
$$
\left(\fint_{B(0, 2R)\setminus B(0, R)} |  p|^2 \right)^{1/2}
\le C R \left(\fint_{B(0, 2R)\setminus B(0, R)} |\nabla p|^2 \right)^{1/2}
+ \big| \fint_{B(0, 2R)\setminus B(0, R)} p \big|,
$$
we see that $p(x) = O(1)$ as $\beta \to \infty$.
It follows that $p(x)\to \beta$ for some $\beta\in \C$ as $|x|\to \infty$.
\end{proof}

\begin{proof}[Proof of Theorem \ref{main-2} for $d\ge 3$]

Let $(u, p)$ be the weak solution of \eqref{eq-0} with $F\in C_{0, \sigma}^\infty(\Omega)$.
In view of Lemma \ref{lemma-ed1}, by replacing $p$ with $p-\beta$, we may assume $p(x) \to 0$
as $|x| \to \infty$.
As a result, the estimates \eqref{P-99e} and \eqref{P-98} hold.
Also, note that \eqref{main-B1} continues to hold in an exterior $C^1$ domain.
Consequently,   by Remark \ref{re-V}, if $|\lambda|> 16 r_0^{-2} N^4$, then
\begin{equation}\label{M-2}
   | \lambda| \| u \|_{L^\infty (\Omega)}
   \le C_0N^{{4d}+8} \| F \|_{L^\infty (\Omega)}
 + C_0 N^{-\frac12}
  |\lambda| \| u \|_{L^\infty(\Omega)}, 
 \end{equation}
 where $C_0$ depends only on $d$ and $\Omega$.
 We now fix $N=N_0$ so that $C_0 N_0^{-1/2} \le (1/2)$.
 Let $\lambda_0= 16 r_0^{-2} N_0^4$.
 It follows that if $\lambda \in \Sigma_\theta$ and $|\lambda|> \lambda_0$,  then
 \begin{equation}\label{M-3}
  | \lambda| \| u \|_{L^\infty (\Omega)}
 \le 2C_0 N_0^{4d+8} \| F \|_{L^\infty(\Omega)}.
 \end{equation}

The existence of solutions for the general $F\in L^\infty_\sigma(\Omega)$ follows by an approximation argument as in \cite{AG-2014, AGH-2015}.
Choosing a sequence of functions $ \{ F_\ell \}$  in $ C_{0, \sigma}^\infty (\Omega)$ such that
$F_\ell \to F$ a.e. in $\Omega$ and $\| F_\ell \|_{L^\infty(\Omega)} \le C \| F \|_{L^\infty(\Omega)}$.
Let $(u_\ell, p_\ell)$ be the weak solution of \eqref{eq-0} with $F_\ell$ in the place of $F$ and with the property that 
$p_\ell (x) = o(1)$ at infinity.
Then 
\begin{equation}\label{M-4}
|\lambda| \| u_\ell \|_{L^\infty (\Omega)} \le C \| F_\ell \|_{L^\infty(\Omega)} \le C \| F \|_{L^\infty(\Omega)},
\end{equation}
where $C$ is independent of $\ell$.
Choose a subsequence, still denoted by $(u_\ell, p_\ell)$, such that
$u_\ell \to u $ weakly in $W^{1, 2}_{\loc}(\Omega; \C^d)$,
$u_\ell \to u$ a.e. in $\Omega$,
$p_\ell \to p$ weakly in $L^2_{\loc}(\Omega; \C)$,
and $p_\ell  \to p$ a.e. in $\Omega$.
Since $p_\ell$ is harmonic, we also have $\nabla p_\ell  \to \nabla p$ in $\Omega$.
It follows that $(u, p)$ satisfies \eqref{eq-0} in the sense of distributions.
By taking the limit in \eqref{M-4}, we obtain $|\lambda| \| u \|_{L^\infty(\Omega)}
\le C \| F \|_{L^\infty(\Omega)}$.
To show $p(x)= \beta + o(1)$ at infinity for some $\beta \in \C$, we note that by Theorem \ref{thm-P1e} and \eqref{main-B1},
\begin{equation}\label{M-5}
\delta (x) |\nabla p_\ell (x) |
\le C  \| \nabla u_\ell \|_{L^q(\partial\Omega)}
\le C |\lambda|^{-1/2} \| F \|_{L^\infty(\Omega)},
\end{equation}
where $q>d$ and $\delta (x) \ge 1$.
By letting $\ell \to \infty$, we obtain 
$$
\delta(x) |\nabla p(x)| \le C |\lambda|^{-1/2} \| F \|_{L^\infty(\Omega)}.
$$
By Lemma \ref{lemma-ed2}, this implies $p(x) \to \beta$ as $|x|\to \infty$ for some $\beta\in \C$.

To show the uniqueness.
Suppose  $u\in W_0^{1, \infty}(\Omega; \C^d)$ and  $p\in L^2_{\loc} (\overline{\Omega}; \C) $ satisfies the condition 
$p(x)= O(1)$ at infinity.
Assume that $(u, p)$ is a solution of $\eqref{eq-0}$ in $\Omega$ with $F=0$.
Since  $p\to \beta$ for some $\beta \in \C$ as $|x|\to \infty$,
by replacing $p$ with $p-\beta$ we may assume that $p (x) \to 0$ as $|x|\to \infty$.
As a result, the  argument for the existence of solutions with  $F\in C_{0, \sigma}^\infty(\Omega)$
yields the estimate  \eqref{M-3} with $F=0$.
Consequently, $u=0$ in $\Omega$.
\end{proof}

%%%%%%%%%%%%%%%%%%%%%%%%

\section{A perturbation argument for Lipschitz domains}\label{section-T}

Let $\H_\psi$ be the region above a Lipschitz graph,  given by \eqref{H}, 
where $\psi: \R^{d-1} \to \R$ is a Lipschitz function such that $\psi (0)=0$ and $\|\nabla^\prime \psi  \|_\infty\le M$.
Let $A_\psi^q, B_\psi^q$ be two Banach spaces on $\H_\psi$, defined by \eqref{AB} for $1< q< \infty$.

\begin{lemma}\label{lemma-T2}
Let $ \lambda\in \Sigma_\theta$ with $|\lambda|=1$.
For any $F\in L^2(\H_\psi; \C^d)$, $f\in L^2(\H_\psi; \C^{d\times d} )$ and $g \in B_\psi^2$,
there exists  a unique  $(u, p)$ such that $u \in W_0^{1, 2}(\H_\psi; \C^d)$, 
$p\in A_\psi^2 $, and \eqref{G-eq} holds.
Moreover, the solution satisfies 
\begin{equation}\label{T2-0}
\aligned
 &  \| \nabla u \|_{L^2(\H_\psi)}
+\| u \|_{L^2(\H_\psi)} + \| p \|_{A_{\psi}^2}
\\
&\quad \le C \left\{
\| F \|_{L^2(\H_\psi)}
+   \| f \|_{L^2(\H_\psi)}
+  \| g \|_{L^2(\H_\psi)}
+   \| g \|_{\hW^{-1, 2}(\H_\psi)}
\right\},
\endaligned
\end{equation}
where $C$ depends only on $\theta$ and $M$.
\end{lemma}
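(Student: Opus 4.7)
The plan is to combine a Bogovskii-type resolution of the divergence equation on $\H_\psi$ with a Lax--Milgram argument on divergence-free vector fields, and then recover the pressure via a de Rham-type argument. First I would construct $w \in W_0^{1,2}(\H_\psi; \C^d)$ with $\text{\rm div}(w) = g$ and
\[
\|\nabla w\|_{L^2(\H_\psi)} + \|w\|_{L^2(\H_\psi)} \le C(M)\bigl(\|g\|_{L^2(\H_\psi)} + \|g\|_{\hW^{-1,2}(\H_\psi)}\bigr).
\]
On the half-space $\R^d_+$ this is classical Bogovskii; on $\H_\psi$ one transplants it via the bi-Lipschitz straightening $\Phi(x', x_d) = (x', x_d + \psi(x'))$, whose Jacobian is uniformly bounded above and below by constants depending only on $M$. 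The $\hW^{-1,2}$ piece of $g$ is precisely what produces the $\|w\|_{L^2}$ bound, which cannot come from $\|g\|_{L^2}$ alone on an unbounded domain. Setting $u = v + w$ then reduces the problem to the divergence-free case with right-hand side $\tilde F + \text{\rm div}(\tilde f)$, where $\tilde F = F - \lambda w \in L^2$ and $\tilde f = f + \nabla w \in L^2$.

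Next I would solve the divergence-free problem by Lax--Milgram on $V = \{v \in W_0^{1,2}(\H_\psi; \C^d) : \text{\rm div}(v) = 0\}$ with sesquilinear form $\mathcal{B}(v_1, v_2) = \int \nabla v_1 : \nabla \bar v_2 + \lambda \int v_1 \cdot \bar v_2$. Writing $\lambda = e^{i\phi}$ with $|\phi| < \pi - \theta$, the rotation trick gives
\[
\text{Re}\bigl(e^{-i\phi/2}\mathcal{B}(v,v)\bigr) = \cos(\phi/2)\bigl(\|\nabla v\|_{L^2}^2 + \|v\|_{L^2}^2\bigr) \ge \sin(\theta/2)\bigl(\|\nabla v\|_{L^2}^2 + \|v\|_{L^2}^2\bigr),
\]
so the form is coercive on $V$ with constant depending only on $\theta$, and the unique Lax--Milgram solution satisfies $\|\nabla v\|_{L^2} + \|v\|_{L^2} \le C(\theta)\bigl(\|\tilde F\|_{L^2} + \|\tilde f\|_{L^2}\bigr)$. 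To recover $p$, I would apply the de Rham theorem on a bounded Lipschitz exhaustion of $\H_\psi$ (again pulled back from the half-space, with uniform constants in $M$): the functional $\eta \mapsto \mathcal{B}(v,\eta) - \int \tilde F \cdot \bar\eta + \int \tilde f : \nabla \bar\eta$ vanishes on $V$, hence equals $\int p\, \text{\rm div}(\bar\eta)$ for some $p \in L^2_{\loc}(\H_\psi)$. To place $p$ into $A_\psi^2$, I would decompose $\nabla p = (\tilde F - \lambda v) + \text{\rm div}(\tilde f + \nabla v)$ and choose $p_1 \in \hW^{1,2}$ with $\nabla p_1 = \tilde F - \lambda v$ (giving $\|\nabla p_1\|_{L^2} \le \|\tilde F\|_{L^2} + \|v\|_{L^2}$) and $p_2 \in L^2$ with $\nabla p_2 = \text{\rm div}(\tilde f + \nabla v)$; the bound $\|p_2\|_{L^2} \le C(M)\bigl(\|\tilde f\|_{L^2} + \|\nabla v\|_{L^2}\bigr)$ is exactly the inf-sup/dual version of the Bogovskii estimate from Step 1.

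Uniqueness is immediate: if $F = f = 0$ and $g = 0$, then $u \in V$, so testing against $\bar u$ and using the rotated coercivity gives $u = 0$; consequently $\nabla p = 0$ in $\H_\psi$, $p$ is a constant, and constants have vanishing $A_\psi^2$-seminorm. The main obstacle I expect is Step 1: establishing the Bogovskii estimate on the unbounded graph domain $\H_\psi$ uniformly in $M$, simultaneously in the $L^2$ and $\hW^{-1,2}$ norms of $g$. Once that ingredient is in hand, the assumption $|\lambda| = 1$ makes the remainder of the argument a routine sectorial energy estimate followed by de Rham, and no smallness of $\|\nabla^\prime\psi\|_\infty$ is required, because one is not tracking uniformity in $|\lambda|$ across Lipschitz regimes as in Theorem \ref{thm-G}.
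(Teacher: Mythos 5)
Your overall architecture coincides with the paper's: reduce to the divergence-free case via a Bogovskii lift of $g$, solve by Lax--Milgram on divergence-free fields with the rotation trick for coercivity (valid since $|\lambda|=1$ decouples the estimate from $|\lambda|$-uniformity, so no smallness of $\|\nabla'\psi\|_\infty$ is needed), recover $p$ by de Rham, and bound $\|p\|_{A_\psi^2}$ by testing $\nabla p$ against Bogovskii lifts of arbitrary $\tilde g\in B_\psi^2$. The paper carries out the same plan in the reverse order (its Step~1 handles $g=0$ together with the pressure bound; its Step~2 then lifts $g$), and the $A_\psi^2$-bound is phrased as the duality $(A_\psi^2)'=B_\psi^2$ rather than as a direct decomposition $p=p_1+p_2$, but these are the same estimate.

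The genuine gap is in your suggested route to the Bogovskii lemma on $\H_\psi$ (the paper's Theorem~\ref{thm-A2}). Transplanting via the straightening $\Phi(x',x_d)=(x',x_d+\psi(x'))$ requires the Piola transform $\tilde w=(D\Phi\cdot w)\circ\Phi^{-1}$ to preserve the divergence constraint, and for the $d$-th component this produces $\tilde w_d=(w_d+\nabla'\psi\cdot w')\circ\Phi^{-1}$. Computing $\nabla\tilde w_d$ then involves $\partial_{y_j}\partial_{y_k}\psi(y')$, i.e.\ second derivatives of $\psi$, which do not exist when $\psi$ is merely Lipschitz. So the straightening does not carry $W^{1,2}_0(\R^d_+)$ into $W^{1,2}_0(\H_\psi)$ with bounds depending only on $M$. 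The paper avoids this by constructing the Bogovskii field directly on $\H_\psi$: it exhausts $\H_\psi$ by bounded truncations $E_\ell=\{|x'|<\ell,\ \psi(x')<x_d<C_0\ell\}$, which are star-like with respect to a ball of comparable radius (with ratio controlled by $M$ only), applies the explicit Bogovskii kernel estimate of Lemma~\ref{lemma-A2-1} on each $E_\ell$, and passes to a weak limit. If you replace the straightening step with this exhaustion argument, the rest of your proposal goes through and matches the paper.
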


\begin{proof}

Step 1. Establish the existence and uniqueness in the special case $g=0$.

Consider the bilinear form,
$$
B[u, v]=\int_{\H_\psi} \nabla u \cdot \overline{\nabla v} + \lambda \int_{\H_\psi} u \cdot \overline{v},
$$
in the Hilbert space,
$$
V=\left\{ u \in W^{1, 2}_0(\H_\psi; \C^d): \ \text{\rm div}(u) =0 \text{ in } \H_\psi \right\}.
$$
By applying the Lax-Milgram theorem, we deduce  that there exists a unique $u\in V$ such that
$$
B[u, v] =\int_{\H_\psi}  F \cdot  \overline{v}  -\int_{\H_\psi} f \cdot \overline{\nabla v}
$$
for any $v\in V$, and that $u$ satisfies 
\begin{equation}\label{T2-2}
 \|\nabla u \|_{L^2(\H_\psi)}
 + \| u \|_{L^2(\H_\psi)}
 \le C \left\{ \| F \|_{L^2(\H_\psi)} + \| f \|_{L^2(\H_\psi)}\right\},
 \end{equation}
 where $C$ depends only on $d$ and $\theta$.
It follows that there exists $p\in L^2_{\loc} (\H_\psi)$ such that
\begin{equation}\label{T2-3}
-\Delta u + \lambda u  +\nabla p = F +\text{\rm div}(f)
\end{equation}
holds in $\H_\psi $ in the distributional sense.
To show $p \in A_\psi^2$,  observe that by \eqref{T2-3} and \eqref{T2-2},  $\nabla p \in W^{-1, 2}(\H_\psi; \C^d)$ and
\begin{equation}\label{T2-4}
\|\nabla p \|_{W^{-1, 2}(\H_\psi)}
\le C \left\{ \| F \|_{L^2(\H_\psi)} + \| f \|_{L^2(\H_\psi)} \right\}.
\end{equation}
For  $\widetilde{g} \in B_\psi^2$, let $ \widetilde{w}$ be a function  in $ W_0^{1, 2}(\H_\psi; \C^d)$ such that
$\text{\rm div}(\widetilde{w}) =\widetilde{g}$ in $\H_\psi$ and
$$
\|\widetilde{w} \|_{W^{1, 2}_0(\H_\psi)}
\le C  \| \widetilde{g}\|_{B_\psi^2},
$$
 given by Theorem  \ref{thm-A2}.
Note that
$$
\aligned
|\langle p, \widetilde{g} \rangle |
& = |\langle p, \text{\rm div}(\widetilde{w}) \rangle |
= |\langle \nabla p, \widetilde{w}  \rangle |\\
& \le \| \nabla p \|_{W^{-1, 2} (\H_\psi)}
\| \widetilde{w}  \|_{W^{1, 2}_0(\H_\psi)}\\
&\le C \left\{ \| F \|_{L^2(\H_\psi)} + \| f \|_{L^2(\H_\psi)} \right\}
\left\{ \| \widetilde{g} \|_{L^2(\H_\psi)} + \| \widetilde{g} \|_{\hW^{-1, 2}(\H_\psi)} \right\},
\endaligned
$$
where we have used \eqref{T1-0} and \eqref{T2-4}.
Since $(A_\psi^2)^\prime= B_\psi^2$, by duality, we obtain 
$$
\| p \|_{A_\psi^2} \le C\left\{  \| F \|_{L^2(\H_\psi)} + \| f \|_{L^2(\H_\psi)} \right\}.
$$
This gives the existence of solutions with the estimate \eqref{T2-0} for the case $g=0$.
We point out that the uniqueness in the general case  follows from the Lax-Milgram theorem.

\medskip

Step 2. Establish the existence and the estimate \eqref{T2-0} in the general case $g\in B_\psi^2$.

Given $g \in B_\psi^2$, let $w$ be the function given by Theorem \ref{thm-A2}.
Let $(v, p)$ be the solution of 
$$
\left\{
\aligned
-\Delta v + \lambda v +\nabla p & = F -\lambda w +\Delta w +\text{\rm div}(f),\\
\text{\rm div} (v) & =0, 
\endaligned
\right.
$$
in $\H_\psi$, obtained in Step 1. Let $u=v+w$. Then $(u, p)$ is a solution of \eqref{eq-0} and
$$
\aligned
\|\nabla u \|_{L^2(\H_\psi)}
+ \| u \|_{L^2(\H_\psi)}
& \le \| \nabla v\|_{L^2(\H_\psi)} + 
 \| \nabla w\|_{L^2(\H_\psi)} + 
 \|   v\|_{L^2(\H_\psi)} + 
 \| w\|_{L^2(\H_\psi)} \\
 & \le C \left\{
 \| F -\lambda w \|_{L^2(\H_\psi)}
 + \|\nabla w \|_{L^2(\H_\psi)}
 + \| f \|_{L^2(\H_\psi)}
 + \| w \|_{L^2(\H_\psi)} \right\}\\
 & \le 
 C  \left\{
\| F \|_{L^2(\H_\psi)}
+   \| f \|_{L^2(\H_\psi)}
+  \| g \|_{L^2(\H_\psi)}
+   \| g \|_{\hW^{-1, 2}(\H_\psi)}
\right\}, 
 \endaligned
 $$
where we have used \eqref{T1-0} for the last step.
\end{proof}

\begin{lemma}\label{lemma-T3}
Let $(X_0, X_1)$, $(Y_0, Y_1)$ be interpolation pairs of Banach spaces, and  $S: X_j \to Y_j$, 
$j=0, 1$ a bounded linear operator.
Let $X_t =[X_0, X_1]_t$ and $Y_t =[Y_0, Y_1]_t$ for $0< t< 1$.
If $S^{-1}: Y_t \to X_t$ exists and is bounded for some $0< t<1$, there exists $\e>0$ such that
$
S^{-1} : Y_s\to X_s
$
exists and is bounded for $s\in (t-\e, t+\e)$.
\end{lemma}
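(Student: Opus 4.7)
The statement is the classical stability theorem of Sneiberg for the complex interpolation method; the plan is to reproduce its standard proof, which combines Calder\'on's analytic-function description of $X_s=[X_0,X_1]_s$ and $Y_s=[Y_0,Y_1]_s$ with a Neumann-series iteration whose geometric ratio comes from a conformal factor vanishing at $t$.

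Recall Calder\'on's representation: writing $\Sigma=\{z\in\mathbb{C}:0<\mathrm{Re}(z)<1\}$, each element of $X_s$ is a value $F(s)$ of a bounded analytic function $F:\overline{\Sigma}\to X_0+X_1$ whose boundary traces $F(j+i\tau)$ lie in $X_j$ continuously in $\tau$, with $\|x\|_{X_s}$ equal to the infimum, over such representatives $F$ satisfying $F(s)=x$, of $\max_{j=0,1}\sup_\tau\|F(j+i\tau)\|_{X_j}$. Fix a conformal map $\phi:\Sigma\to\{|w|<1\}$ with $\phi(t)=0$; then $|\phi|\equiv 1$ on $\partial\Sigma$ and $|\phi(s)|\to 0$ as $s\to t$.

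Given $y\in Y_s$ with $\|y\|_{Y_s}\le 1$, I would choose an analytic extension $g$ with $g(s)=y$ and boundary norm at most $1+\delta$. Setting $x_t:=S^{-1}g(t)\in X_t$, one has $\|x_t\|_{X_t}\le M(1+\delta)$ with $M:=\|S^{-1}\|_{Y_t\to X_t}$; extend $x_t$ to an analytic $F$ with $F(t)=x_t$ and boundary norm at most $M(1+\delta)^2$. The difference $H(z):=g(z)-SF(z)$ is analytic into $Y_0+Y_1$ with boundary norm at most $C_0:=(1+\delta)+M^*M(1+\delta)^2$, where $M^*:=\max(\|S\|_{X_0\to Y_0},\|S\|_{X_1\to Y_1})$. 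Because $H(t)=0$, the quotient $H/\phi$ is analytic on $\Sigma$, and since $|\phi|\equiv 1$ on $\partial\Sigma$ its boundary norm is still at most $C_0$. Setting $x^{(1)}:=F(s)\in X_s$ yields $\|x^{(1)}\|_{X_s}\le M(1+\delta)^2$ and $\|y-Sx^{(1)}\|_{Y_s}=|\phi(s)|\,\|(H/\phi)(s)\|_{Y_s}\le|\phi(s)|C_0$.

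Iterating the same construction on the successive residuals produces $x^{(k)}\in X_s$ with $\|x^{(k)}\|_{X_s}\le M(1+\delta)^2(|\phi(s)|C_0)^{k-1}$. Choosing $\varepsilon>0$ so that $|\phi(s)|C_0\le 1/2$ whenever $|s-t|<\varepsilon$ --- possible since $\phi(t)=0$ --- the Neumann series $x:=\sum_k x^{(k)}$ converges in $X_s$ to a solution of $Sx=y$ with $\|x\|_{X_s}\le 2M(1+\delta)^2\|y\|_{Y_s}$, producing a bounded right inverse $R_s:Y_s\to X_s$. Running the analogous iteration with the roles of $X$ and $Y$ interchanged (extend $x\in X_s$ analytically, apply $S$, reconstruct via the same conformal factor) yields the matching lower bound $\|Sx\|_{Y_s}\gtrsim\|x\|_{X_s}$, hence injectivity, so $S$ is an isomorphism $X_s\to Y_s$ on the whole interval $(t-\varepsilon,t+\varepsilon)$. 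The main technical point, and the step needing the most care, is that $C_0$ and therefore $\varepsilon$ depend only on $M$, $M^*$, and the interpolation data --- not on the arbitrary extensions $g$ and $F$ --- which is precisely what forces a single $\varepsilon$ to work uniformly in the neighborhood.
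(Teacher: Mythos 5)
The paper gives no proof of its own; Lemma \ref{lemma-T3} is Sneiberg's stability theorem and the paper simply cites \cite[Proposition 4.1]{VV1988}. So there is nothing in the text to compare against; what matters is whether your Sneiberg-style argument is complete. The surjectivity half of your proof is correct and is exactly the standard argument: the conformal factor $\phi$ vanishing at $t$, the Neumann-type residual iteration, and the observation that the geometric ratio $|\phi(s)|C_0$ depends only on $M$, $M^*$, $\delta$ produce a right inverse $R_s$ with uniformly bounded norm for $s$ near $t$.

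The injectivity/lower-bound half, however, is dismissed in one sentence and the sentence as written does not close. If you ``extend $x\in X_s$ analytically to $F$ and apply $S$,'' the function $SF$ does not vanish at $t$, so there is no analytic factor to divide out; interchanging the roles of $X$ and $Y$ does not reproduce the structure that made the forward iteration work. What actually works is a two-stage variant. Suppose $Sx=0$ with $\|x\|_{X_s}=1$ and $F(s)=x$, $\|F\|\le 1+\delta$. Then $SF$ vanishes at $s$, so you divide by a conformal factor $\phi_s$ vanishing at $s$ (not the $\phi=\phi_t$ used before) and evaluate at $t$ to conclude $\|SF(t)\|_{Y_t}\le |\phi_s(t)|M^*(1+\delta)$, hence $\|F(t)\|_{X_t}\le M M^*(1+\delta)|\phi_s(t)|$. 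You then need a second step: extend the small element $F(t)$ to an analytic $\Phi$ with $\Phi(t)=F(t)$ and divide $F-\Phi$ by $\phi_t$ to transfer the smallness at $t$ back to $s$, giving $\|x\|_{X_s}=\|F(s)\|_{X_s}\le C\bigl(|\phi_s(t)|+|\phi_t(s)|\bigr)$, a contradiction for $s$ close to $t$. Both conformal factors are needed, and this second step is not in your sketch. Alternatively one can deduce injectivity from surjectivity of the adjoint $S^*$ on the dual scale via Calder\'on duality, but either way the lower bound requires an argument you have not supplied.
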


\begin{proof}
See \cite[Proposition 4.1]{VV1988}.
\end{proof}

\begin{thm}\label{thm-T}
Let $ \lambda\in \Sigma_\theta$ and $M>0$.
There exists $\e \in (0, 1)$,  depending only on $d$, $\theta$ and $M$, such that 
if  $\|\nabla^\prime \psi \|_\infty\le  M$ and $|\frac{1}{q}-\frac12| < \e$, then 
for any $F\in L^q(\H_\psi; \C^d)$, $f\in L^q(\H_\psi; \C^{d\times d} )$ and $g \in B_\psi^q$,
there exists  a unique  $(u, p)$ such that $u \in W_0^{1, q}(\H_\psi; \C^d)$, 
$p\in A_\psi^q $, and \eqref{G-eq} holds.
Moreover, the solution satisfies 
\begin{equation}\label{T-est}
\aligned
 & |\lambda|^{1/2} \| \nabla u \|_{L^q(\H_\psi)}
+|\lambda| \| u \|_{L^q(\H_\psi)}
\\
&\quad \le C \left\{
\| F \|_{L^q(\H_\psi)}
+ |\lambda|^{1/2}  \| f \|_{L^q(\H_\psi)}
+|\lambda|^{1/2}  \| g \|_{L^q(\H_\psi)}
+  |\lambda| \| g \|_{\hW^{-1, q}(\H_\psi)}
\right\},
\endaligned
\end{equation}
where $C$ depends only on $d$, $\theta$ and $M$.
\end{thm}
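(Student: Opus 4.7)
The plan is to propagate the $L^2$ invertibility for arbitrary Lipschitz graphs (Lemma \ref{lemma-T2}) to $L^q$ for $q$ near $2$, by regarding the resolvent Stokes system as a bounded linear operator between complex interpolation scales indexed by $1/q$ and invoking the Sneiberg--Vignati--Vignati stability principle (Lemma \ref{lemma-T3}).

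First I would reduce to $|\lambda|=1$. The rescaling $\tilde u(y) = u(|\lambda|^{-1/2}y)$, $\tilde p(y) = |\lambda|^{-1/2} p(|\lambda|^{-1/2}y)$, with the analogous rescalings of $F$, $f$, $g$, transforms \eqref{G-eq} on $\H_\psi$ into the same system with spectral parameter $\lambda/|\lambda|$ on the graph $\H_{\tilde\psi}$, where $\tilde\psi(y') = |\lambda|^{1/2}\psi(|\lambda|^{-1/2}y')$ has the same Lipschitz constant $M$. Since \eqref{T-est} is invariant under this change of variables, it suffices to prove the unweighted estimate at $|\lambda|=1$ with a constant depending only on $d$, $\theta$, and $M$.

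Next I would set up the abstract invertibility problem. Set $X_q = W_0^{1,q}(\H_\psi; \C^d) \times A_\psi^q$ with the natural product norm, let $\tilde A^q$ denote the quotient Banach space $L^q(\H_\psi; \C^d) + \text{\rm div}(L^q(\H_\psi; \Cdd))$ with the infimum norm, and $Y_q = \tilde A^q \times B_\psi^q$. Define the linear map
\[ T_q \colon X_q \to Y_q, \qquad T_q(u,p) = \bigl(\,[-\Delta u + \nabla p + \lambda u],\ \text{\rm div}(u)\,\bigr). \]
For each $1 < q < \infty$, $T_q$ is bounded: given any decomposition $p = p_1 + p_2$ with $p_1 \in L^q$, $\nabla p_2 \in L^q$, one represents the first component of $T_q(u,p)$ by $F := \lambda u + \nabla p_2 \in L^q$ and $f_{ij} := p_1 \delta_{ij} - \partial_j u^i \in L^q$, then takes the infimum over the decomposition to control the $\tilde A^q$ norm by $\|u\|_{L^q} + \|\nabla u\|_{L^q} + \|p\|_{A_\psi^q}$; the second component is clearly in $B_\psi^q$. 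Lemma \ref{lemma-T2} (at $|\lambda|=1$) is the statement that $T_2$ is a Banach space isomorphism with $\|T_2^{-1}\|$ depending only on $\theta$ and $M$. Assuming $\{X_q\}$ and $\{Y_q\}$ form complex interpolation scales in $1/q$, Lemma \ref{lemma-T3} then supplies $\varepsilon \in (0, 1/2)$ such that $T_q^{-1}$ exists and is uniformly bounded for $|1/q - 1/2| < \varepsilon$. Given data $(F, f, g) \in L^q(\H_\psi; \C^d) \times L^q(\H_\psi; \Cdd) \times B_\psi^q$, the element $F + \text{\rm div}(f) \in \tilde A^q$ has norm at most $\|F\|_{L^q} + \|f\|_{L^q}$, so applying $T_q^{-1}$ yields the unique $(u,p) \in X_q$ solving \eqref{G-eq} with the unweighted version of the desired estimate. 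Undoing the scaling reinstates the $|\lambda|^{1/2}$ and $|\lambda|$ weights in \eqref{T-est}.

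The main obstacle I anticipate is verifying the complex interpolation identities $[X_{q_0}, X_{q_1}]_\theta = X_{q_\theta}$ and $[Y_{q_0}, Y_{q_1}]_\theta = Y_{q_\theta}$ that are needed to apply Lemma \ref{lemma-T3}. The $W_0^{1,q}$ factor is classical, but $A_\psi^q = L^q + \hW^{1,q}$ is a sum and $\tilde A^q$ is a quotient of Banach spaces on the unbounded Lipschitz graph domain $\H_\psi$; for these the scale identification is subtler than the Calderón--Lions theorem for a single Bessel-potential scale. The cleanest route is through the duality $A_\psi^q = (B_\psi^{q'})^*$, combined with the observation that $B_\psi^q = L^q \cap \hW^{-1,q}$ is an intersection (where complex interpolation is well behaved), and a retraction--coretraction built from a bounded Lipschitz extension $\H_\psi \hookrightarrow \R^d$. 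All other steps---the scaling reduction, the boundedness of $T_q$, and the final translation of abstract invertibility into \eqref{T-est}---are routine.
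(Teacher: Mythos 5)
Your proposal is correct and follows essentially the same route as the paper: rescale to $|\lambda|=1$, view the system as a bounded operator between the interpolation scales $W_0^{1,q}\times A_\psi^q$ and (a space equivalent to) $W^{-1,q}\times B_\psi^q$, invert at $q=2$ via Lemma \ref{lemma-T2}, and apply the stability result of Lemma \ref{lemma-T3}. The only differences are cosmetic (your quotient space $L^q+\mathrm{div}(L^q)$ coincides with the paper's $W^{-1,q}$ target), and you are in fact more explicit than the paper about the need to verify the interpolation identities for the scales and about tracking the uniformity of $\e$ and $C$ in $d$, $\theta$, $M$.
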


\begin{proof}
By dilation we may assume $|\lambda|=1$.
Let $\psi: \R^{d-1} \to \R$ be a Lipschitz function with $\|\nabla^\prime \psi \|_\infty\le M$.
As in \cite{GS-2023}, we introduce two Banach spaces,
\begin{equation}\label{XY}
X_\psi^q =W^{1, q}_0 (\H_\psi; \C^d) \times A_{\psi}^q
\quad \text{ and } \quad
Y_\psi^q= W^{-1, q}(\H_\psi; \C^d) \times B_\psi^q,
\end{equation}
where $1<q<\infty$.
Consider the linear operator,
\begin{equation}\label{S}
S_\psi^\lambda (u, p)
=\left(-\Delta u +\nabla p + \lambda u, \text{\rm div}(u) \right).
\end{equation}
Then $S_\psi^\lambda$ is bounded from $X_\psi^q$ to $Y_\psi^q$
for any $1<q<\infty$. Moreover,
\begin{equation}\label{S-1}
\| S_\psi^\lambda \|_{X_\psi^q \to Y_\psi^q} \le C,
\end{equation}
where $C$ depends only on $d$, $q$ and $M$.
Note that the existence and uniqueness of solutions of \eqref{G-eq} in $X_\psi^q$,   with data $F\in L^q(\H_\psi; \C^d)$,
$f\in L^q(\H_\psi; \C^{d\times d} )$ and $g \in B_\psi^q$, are
equivalent to the invertibility of the operator $S^\lambda_\psi: X_\psi^q \to Y_\psi^q$.
In \cite{GS-2023} we were able to show  that  there exists $c_0=c_0(d, q, \theta )>0$ such that 
if $\|\nabla^\prime \psi \|_\infty< c_0$, then $S_\psi^\lambda: X_\psi^q \to Y_\psi^q$ is invertible.
Here, we will show that there exists $\e =\e(d, \theta, M)>0$ such that if $\|\nabla^\prime \psi \|_\infty\le M$ and
$|\frac{1}{q}-\frac12 |< \e$, then
$S_\psi^\lambda: X_\psi^q \to Y_\psi^q$ is invertible.
Moreover,  
\begin{equation}\label{S-1a}
\| \left( S_\psi^\lambda \right)^{-1} \|_{Y^q_\psi\to X_\psi^q} \le C,
\end{equation}
where $C$ depends only on $d$, $\theta$ and $M$.
This gives Theorem \ref{thm-T}.

To this end, we choose $q_0=(4/3)$ and $q_1=4$ so that  $1<q_0< 2< q_1< \infty$ and
$|\frac{1}{q_0} -\frac12 | = |\frac{1}{q_1} -\frac12 | = \frac14$.
Note that $(X_\psi^{q_0}, X^{q_1}_\psi)$ and $(Y_\psi^{q_0}, Y_\psi^{q_1})$ are (complex) interpolation pairs 
of Banach spaces. In fact,  for $0< t< 1$,
$$
[X_\psi^{q_0}, X_\psi^{q_1}]_t =  X_\psi^q \quad \text{ and } \quad
[Y_\psi^{q_0}, Y_\psi^{q_1}]_t =  Y_\psi^q, 
$$
where $\frac{1}{q} =\frac{1-t}{q_0} +\frac{t}{q_1}$. 
As we pointed out earlier, the linear operator $S_\psi^\lambda$ is bounded from $X_\psi^q$
to $Y_\psi^q$ for $1< q< \infty$.
By Lemma \ref{lemma-T2},  $S^{-1}: Y_\psi^2 \to X^2_\psi$ exists and is bounded.
It follows by Lemma \ref{lemma-T3} that  there exists $\e>0$ such that 
$S^{-1}: Y_\psi^q\to X_\psi^q$ exists and is bounded if $|\frac{1}{q}-\frac12| < \e$.
Moreover,  observe that $\| S_\psi^\lambda \|_{X_\psi^{q_j}\to Y_\psi^{q_j}}\le C$ for $j =0, 1$ and
$\| (S_\psi^\lambda)^{-1} \|_{Y_\psi^2 \to X_\psi^2} \le C$, where $C$ depends only on $d$, $\theta$ and $M$.
As a result, it is possible to choose $\e$ and $C_0$, depending only on $d$, $\theta$ and $M$, such that \eqref{S-1a} 
holds for all $q$ satisfying $|\frac{1}{q}-\frac12|< \e$.
This follows from the same argument as that for the $L^p$ spaces in \cite[p.390]{VV1988}.
\end{proof}

The following theorem covers the two dimensional case for Theorem \ref{main-1}.

\begin{thm}\label{thm-T4}
Let $\lambda\in \Sigma_\theta$, where $\theta\in (0, \pi/2)$.
Let $\Omega$ be a bounded Lipschitz domain in $\R^2$.
For any $F \in L^\infty_\sigma (\Omega)$, the unique weak solution of \eqref{eq-0} in
$W^{1, 2}(\Omega; \C^2) \times L^2(\Omega; \C)$ with $\int_\Omega p=0$ satisfies the estimate,
\begin{equation}\label{T4-0}
|\lambda| \| u \|_{L^\infty (\Omega)}
\le C \| F \|_{L^\infty(\Omega)},
\end{equation}
where $C$ depends only on $\theta$ and the Lipschitz character of $\Omega$
Moreover, if $\Omega$ is a bounded $C^{1, \alpha}$ domain in $\R^2$ for some $\alpha>0$, then
\begin{equation}\label{T4-1}
|\lambda|^{1/2}  \|\nabla u \|_{L^\infty(\Omega)}
\le C \| F \|_{L^\infty(\Omega)},
\end{equation}
where $C$ depends on $\theta$ and $\Omega$.
\end{thm}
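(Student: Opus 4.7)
The plan is to transpose the proof of Theorem \ref{main-1} carried out in Section \ref{section-M1} for bounded $C^1$ domains in $\R^d$, $d\ge 3$, to the two-dimensional Lipschitz setting. The observation that makes this possible is that in dimension two the critical condition $q>d$ becomes $q>2$, and all three pieces of machinery used in Section \ref{section-M1} remain available on a bounded Lipschitz domain provided $q$ stays in a narrow range $(2,2+\varepsilon)$. Specifically, I need: (a) a Lipschitz-graph localization analogous to Theorem \ref{thm-L}; (b) the pressure estimates \eqref{P-99} and \eqref{P-97} of Theorems \ref{thm-P1} and \ref{thm-P2}; and (c) the $M_q(|\nabla u|)$ control of Lemma \ref{lemma-C-2}. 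Item (b) is already established for a bounded Lipschitz $\Omega$ and any $q$ with $2<q<2+\varepsilon$ by combining Theorems \ref{thm-P1}--\ref{thm-P2} with Remark \ref{app-re}. Item (c) is provided, in the same range $2\le q<2+\varepsilon$, by Remark \ref{Lip-B-re}. For item (a), I would simply rerun the proof of Theorem \ref{thm-L} verbatim, replacing every appeal to Theorem \ref{thm-G} by the perturbation theorem \ref{thm-T}, which supplies the required generalized $L^q$ resolvent estimate on any Lipschitz graph $\H_\psi$ with $\|\nabla^\prime\psi\|_\infty\le M$, for exponents $q$ satisfying $|1/q - 1/2|<\varepsilon_0(M,\theta)$.

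With these three ingredients in hand, I would reproduce the proof of Theorem \ref{thm-V} in the 2D Lipschitz setting. The only place in that proof that explicitly used the $C^1$ hypothesis is the boundary sup bound \eqref{V5-2}; in 2D Lipschitz it is recovered from the Sobolev embedding $W^{1,q}\hookrightarrow L^\infty$ for $q>2$, applied after a standard cutoff, together with the Lipschitz $W^{1,q}$ regularity of the stationary Stokes system (Theorem \ref{thm-B1} and the results of \cite{FKV-1988}). Fixing $q>2$ in the intersection of the three admissible ranges above, and inserting the pressure bounds into the localization inequality, followed by Remark \ref{Lip-B-re} applied to $M_q(|\nabla u|)(|\lambda|^{-1/2})$, I would arrive at
$$
|\lambda|\|u\|_{L^\infty(\Omega)}
\le C_0 N^{4d+1}\|F\|_{L^\infty(\Omega)}
+ C_0 N^{\frac{d}{q}-1}|\lambda|\|u\|_{L^\infty(\Omega)}
$$
for $F\in C_{0,\sigma}^\infty(\Omega)$. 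Since $d=2$ and $q>2$, the exponent $d/q-1$ is strictly negative, so choosing $N$ large enough absorbs the last term and gives \eqref{T4-0} for smooth solenoidal data. The extension to general $F\in L^\infty_\sigma(\Omega)$ is by exactly the approximation argument of Section \ref{section-M1}: one approximates $F$ a.e.\ by $F_\ell\in C_{0,\sigma}^\infty(\Omega)$ with uniform $L^\infty$ bound, uses the $L^2$ resolvent estimate to pass to a.e.\ convergent subsequences, and takes the limit in the pointwise $L^\infty$ inequality. Uniqueness reduces to the $L^2$ case since $L^\infty_\sigma(\Omega)\subset L^2(\Omega;\C^2)$. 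Finally, the gradient estimate \eqref{T4-1} under the $C^{1,\alpha}$ hypothesis is immediate from \eqref{T4-0} and the local Lipschitz estimate recorded in Remark \ref{re-smooth}.

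The main obstacle I anticipate is the bookkeeping of the three small parameters $\varepsilon$ appearing in Theorem \ref{thm-T}, Remark \ref{Lip-B-re}, and Remark \ref{app-re}: each depends on $\theta$ and on the Lipschitz character of $\Omega$ (respectively of the graph $\H_\psi$ arising in a boundary chart), and I must verify that their intersection is nonempty so that a single $q\in(2,2+\varepsilon)$ simultaneously validates the localization, the pressure estimate, and the local gradient regularity. Once this is confirmed, and the boundary $L^\infty$-from-$L^q$ step of Theorem \ref{thm-V} has been checked in the Lipschitz category, the remainder of the argument is a mechanical rerun of the $C^1$ proof of Theorem \ref{main-1} in Section \ref{section-M1} with $d=2$.
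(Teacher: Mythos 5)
Your proposal is correct and follows essentially the same route as the paper's proof in Section~\ref{section-T}: replace Theorem~\ref{thm-G} by the perturbation Theorem~\ref{thm-T} in the localization, pick a single $q\in(2,2+\varepsilon)$ admissible simultaneously for Theorem~\ref{thm-T}, Remark~\ref{Lip-B-re}, and Remark~\ref{app-re}, rerun Sections~\ref{section-L}--\ref{section-V} with $d=2$ to get \eqref{V5-00}, close with the pressure estimates of Section~\ref{section-P1} (already stated for Lipschitz domains in $\R^d$, $d\ge 2$), and finish by absorption in $N$, followed by the same approximation and the $C^{1,\alpha}$ gradient estimate from Remark~\ref{re-smooth}. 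The only difference is that you spell out the verification of the boundary $L^\infty$-from-$L^2$ bound \eqref{V5-2} and the compatibility of the three small parameters, which the paper leaves implicit.
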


\begin{proof}
As in the case $d\ge 3$,
the gradient estimate \eqref{T4-1} for $C^{1, \alpha}$ domains follows from \eqref{T4-0} and \eqref{Lip-est}.
With Theorem \ref{thm-T} at our disposal, the proof for \eqref{T4-0}
 is similar to that for the case of $C^1$ domains. We give its outlines.

\medskip

Step 1. Since $\Omega$ is Lipschitz, there exist $M>0$ and $r_0>0$ with the properties that  for any $x_0\in \partial\Omega$,
there is a Cartesian coordinate system with origin $x_0$ such that
$$
B(x_0, 16r_0)\cap \Omega
=B(x_0, 16 r_0) \cap \H_{\psi} \quad \text{ and } \quad
B(x_0, 16 r_0)\cap \partial\Omega
= B(x_0, 16 r_0) \cap \partial \H_\psi,
$$
where $\psi: \R \to \R$ is a Lipschitz function with $ \| \nabla^\prime \psi \|_\infty \le M$.

\medskip

Step 2.  Choose $q\in (2, 2+\e)$, where $\e>0$ is given by Theorem \ref{thm-T} and depends only on $M$.
We also assume that $q$ is close to $2$ so that the boundary estimate \eqref{C-1-2}
holds. See Remark \ref{Lip-B-re}.

\medskip

Step 3. Observe that since  $q>d=2$, 
the same argument as in Sections \ref{section-L} and \ref{section-V} for $C^1$ domains, shows that 
the estimate \eqref{V5-00} continues to hold for the two-dimensional Lipschitz domain $\Omega$.

\medskip

Step 4. Note that the estimates for the pressure in Section \ref{section-P1} are proved for Lipschitz domains in $\R^d$, $d\ge 2$.
By combining \eqref{V5-00} with \eqref{P3-0}-\eqref{P3-00}, we obtain
\begin{equation}
|\lambda| \| u \|_{L^\infty(\Omega)}
\le C N^{16} \| F \|_{L^\infty(\Omega)}
+ C N^{\frac{2}{q}-1} |\lambda| \| u \|_{L^\infty(\Omega)}
\end{equation}
for any $N\ge 2$.
This gives the estimate
\eqref{T4-0}
 under the additional assumption that $F \in C_{0, \sigma}^\infty (\Omega)$.
An approximation argument, similar to that in the case of $C^1$ domains,  yields \eqref{T4-0}
for $F\in L^\infty_\sigma (\Omega)$.
\end{proof}

\begin{thm}\label{thm-T5}
Let $\Omega$ be a bounded Lipschitz domain in $\R^2$.
Let $1< q< \infty$ and $\lambda\in \Sigma_\theta$, where $\theta \in (0, \pi/2)$.
Then for any $F \in L^q_\sigma (\Omega; \C^2)$, the Dirichlet problem \eqref{eq-0} has a unique 
solution in $W^{1, q}_0 (\Omega; \C^2) \times L^q(\Omega; \C)$ with $\int_\Omega p =0$.
Moreover, the solution satisfies the estimate
\begin{equation}\label{T5-0}
|\lambda | \| u \|_{L^q(\Omega)}
\le C \| F \|_{L^q(\Omega)},
\end{equation}
where $C$ depends only on $q$, $\theta$ and $\Omega$.
\end{thm}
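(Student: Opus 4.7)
The plan is to derive Theorem \ref{thm-T5} from Theorem \ref{thm-T4} via Riesz--Thorin interpolation for $q\ge 2$ and a duality argument for $q<2$, as sketched in Remark 1.2. For the range $2\le q<\infty$, regard the Lax--Milgram energy solution operator $T_\lambda\colon L^2_\sigma(\Omega)\to L^2_\sigma(\Omega)$, which by \eqref{e-est} satisfies $\|T_\lambda\|_{L^2\to L^2}\le C|\lambda|^{-1}$. Because $\Omega$ is bounded, $L^\infty_\sigma\subset L^2_\sigma$, and Theorem \ref{thm-T4} gives $\|T_\lambda F\|_{L^\infty}\le C|\lambda|^{-1}\|F\|_{L^\infty}$ for every $F\in L^\infty_\sigma$. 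Interpolation on the subspace $L^\infty_\sigma$, viewed simultaneously inside $L^2$ and $L^\infty$, produces $\|T_\lambda F\|_{L^q}\le C|\lambda|^{-1}\|F\|_{L^q}$ for $F\in L^\infty_\sigma$ and $2\le q\le\infty$; density of $C^\infty_{0,\sigma}(\Omega)$ in $L^q_\sigma(\Omega)$ extends this to $F\in L^q_\sigma$ by continuity, and the pressure $p$ with $\int_\Omega p=0$ is recovered from the equation.

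For the range $1<q<2$, take $F\in L^q_\sigma$ and approximate it in $L^q$ by $F_n\in C^\infty_{0,\sigma}(\Omega)$. Let $u_n=T_\lambda F_n$ be the corresponding energy solutions. For a test field $G\in C^\infty_{0,\sigma}(\Omega)$, let $v=T_{\bar\lambda}G$ be the energy solution of the conjugate resolvent problem; the first step gives $\|v\|_{L^{q'}}\le C|\lambda|^{-1}\|G\|_{L^{q'}}$, since $\bar\lambda\in\Sigma_\theta$ and $q'>2$. Integration by parts, legitimate because $u_n,v\in W^{1,2}_0$ are both divergence free, produces the duality identity $\int_\Omega u_n\cdot\bar G=\int_\Omega F_n\cdot\bar v$, whence
\begin{equation*}
\Big|\int_\Omega u_n\cdot\bar G\Big|\le C|\lambda|^{-1}\|F_n\|_{L^q}\|G\|_{L^{q'}}.
\end{equation*}
To convert this into $\|u_n\|_{L^q}\le C|\lambda|^{-1}\|F_n\|_{L^q}$, note that $u_n$ is solenoidal with zero trace, so $\int_\Omega u_n\cdot\overline{\nabla\phi}=0$ for every $\phi\in W^{1,q'}(\Omega)$: for any $h\in L^{q'}$ admitting a Helmholtz splitting $h=h_\sigma+\nabla\phi$ with $h_\sigma\in L^{q'}_\sigma$ and $\|h_\sigma\|_{L^{q'}}\le C\|h\|_{L^{q'}}$, the pairing $\int_\Omega u_n\cdot\bar h$ collapses to $\int_\Omega u_n\cdot\bar h_\sigma$, to which the preceding bound and the density of $C^\infty_{0,\sigma}$ in $L^{q'}_\sigma$ apply. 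Taking the supremum over $h\in L^{q'}$ and letting $n\to\infty$ produces a solution $u\in L^q_\sigma$ with the stated estimate; uniqueness follows from applying the estimate to differences of solutions.

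The principal obstacle is exactly this last Helmholtz step: on a two-dimensional bounded Lipschitz domain, a bounded Helmholtz projection on $L^{q'}$ is classical only in a range around $q'=2$ depending on the Lipschitz character of $\Omega$. To span the full range $q'\in(2,\infty)$ I intend to use a stream-function reformulation specific to two dimensions, recasting the decomposition as an $L^{q'}$ regularity statement for Dirichlet and Neumann problems for $-\Delta$, and combining it with the perturbation scheme of Theorem \ref{thm-T} and the localization procedure of Section \ref{section-T} already employed in the proof of Theorem \ref{thm-T4}.
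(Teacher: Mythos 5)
Your overall route --- interpolation between the $L^2$ energy estimate and Theorem \ref{thm-T4} for $2\le q<\infty$, then duality for $1<q<2$ --- is exactly the route the paper takes; its entire proof of Theorem \ref{thm-T5} is the two sentences ``the case $2<q<\infty$ follows from the case $q=2$ and Theorem \ref{thm-T4} by interpolation; the case $1<q<2$ is given by duality.'' So you have not missed a device that the paper supplies. The problem is that your proof is not complete, and you say so yourself: the last paragraph is a plan, not an argument. The gap you have correctly located is real and is the crux of the matter. Your duality computation yields
\begin{equation*}
\sup\Big\{ \Big|\int_\Omega u\cdot \overline{G}\Big| : \ G\in C^\infty_{0,\sigma}(\Omega),\ \|G\|_{L^{q'}(\Omega)}\le 1\Big\}
\le C|\lambda|^{-1}\|F\|_{L^q(\Omega)},
\end{equation*}
which controls only the quotient norm $\inf_{\phi}\|u-\nabla\phi\|_{L^q(\Omega)}$. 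Upgrading this to $\|u\|_{L^q(\Omega)}$ is equivalent to the boundedness of the Helmholtz-type projection onto $L^q_\sigma$ along gradients, i.e.\ to the $L^{q'}$ Helmholtz decomposition. In a bounded planar Lipschitz domain this is available only for $q'$ in a restricted range around $2$ (roughly $4/3-\e<q'<4+\e$; a reentrant corner of aperture close to $2\pi$ produces the obstruction via the singular harmonic function $r^{\pi/\omega}\sin(\pi\varphi/\omega)$). So your argument, as written, covers $4/3-\e<q<2$ but not $1<q\le 4/3$, and the proposed stream-function repair is not carried out --- note that it would run into the same restricted solvability ranges for the Dirichlet and Neumann problems for $-\Delta$ in Lipschitz domains, so it is not obviously a repair at all.

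Two further points. First, the same subspace issue also infects your first step for large $q$: Riesz--Thorin does not apply verbatim to an operator defined only on the solenoidal subspace, because the analytic deformation $|F|^{\alpha(z)}\operatorname{sgn}F$ does not preserve solenoidality; one needs $L^q_\sigma=[L^2_\sigma,L^\infty_\sigma]_t$, and the standard retract argument for this identification again uses the boundedness of the Helmholtz projection on $L^q$, hence is problematic for $q\ge 4$ in a general planar Lipschitz domain. Second, none of this is addressed in the paper either, so the fair summary is: your proposal reproduces the paper's strategy faithfully, is sound in the range where the planar Helmholtz decomposition is known ($4/3-\e<q<4+\e$, which already extends \cite{Tolksdorf-2022} only marginally), and leaves the remaining exponents open. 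To close the gap you would need either a direct proof of the bound $\|u\|_{L^q}\le C\inf_\phi\|u-\nabla\phi\|_{L^q}$ for the particular fields $u\in W^{1,2}_0$ with $\operatorname{div}(u)=0$ produced by the resolvent problem, or a localization/perturbation argument in the spirit of Sections \ref{section-L}--\ref{section-T} run directly at the exponent $q$ rather than through abstract duality.
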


\begin{proof}
The case for $2< q< \infty$ follows from the case $q=2$ and Theorem \ref{thm-T4}
by interpolation.
The case $1<q< 2$ is given by duality.
\end{proof}

\begin{remark}
As in the case of $C^1$ exterior domains,
one also obtains the  $L^\infty$ resolvent estimate in an exterior Lipschitz domain in $\R^2$  if $\lambda\in \Sigma_\theta$ and
$|\lambda|$ is large. We omit the details.
\end{remark}

 \appendixpage

%%%%%%%%%%%%%%%%%%%%%%

 \appendix

%%%%%%%%%%%%%%%%%%

\section{Regularity for the Stokes equations in Lipschitz and $C^1$ domains}

In this appendix we collect some regularity results for the Stokes equations in nonsmooth domains.
Let $\Omega$ be a bounded Lipschitz domain.
For a function $u$  in $\Omega$, the nontangential maximal function of $u$ is defined by
\begin{equation}\label{n-max}
(u)^* (x) =\sup \left\{ |u(y)|: \  y \in \gamma(x) \right\}
\end{equation}
for $x\in \partial\Omega$, where
$\gamma(x)= \{  y \in \Omega: \  |y-x|< C_0\,  \text{\rm dist}(y, \partial\Omega) \}$ and  $C_0>1$ depends on $d$ and $\Omega$.
We say $u=f$ n.t. on $\partial\Omega$,  if
$u(y) \to f(x)$ as $y \in \gamma(x)$ and $y \to x$  for a.e. $x\in \partial\Omega$.

\begin{thm}\label{thm-B1}
Let $\Omega$ be a bounded Lipschitz domain in $\R^d$, $d\ge 2$ with connected boundary.
There exists $\e\in (0, 1)$, depending only on $d$ and $\Omega$,  with the property that  for any
$f \in W^{1, q}(\partial\Omega; \R^d)$ and $q\in (2-\e, 2+\e)$, there exist a unique $v \in W^{1, q}(\Omega; \R^d)$ 
 and $\pi  \in L^q(\Omega)$ with $\int_\Omega \pi =0$ such that 
\begin{equation}\label{B1-0}
-\Delta v +\nabla \pi =0 \quad \text{ and } \quad \text{\rm div}(v) =0 \quad \text{ in }\Omega,
\end{equation}
 $v=f$ n.t. on $\partial\Omega$, and $(\nabla v)^* \in L^q(\partial\Omega)$.
 Moreover, $\nabla v$ and $\pi$  have  n.t. limits  a.e. on $\partial\Omega$,  and 
  the solution $(v, \pi)$ satisfies 
 \begin{equation}\label{B1-1}
 \| (\nabla v)^* \|_{L^q(\partial\Omega)}
 + \| (\pi )^* \|_{L^q(\partial\Omega)} \le C \| \nabla_{\tan} f \|_{L^q(\partial\Omega)},
\end{equation}
where  $\nabla_{\tan} f$ denotes the tangential gradient of $f$ on $\partial\Omega$ and $C$ depends on $d$, $q$ and $\Omega$.
Furthermore, if $\Omega$ is a bounded $C^1$ domain, the estimate \eqref{B1-1} holds for any $1< q< \infty$.
\end{thm}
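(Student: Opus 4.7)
The plan is to establish Theorem \ref{thm-B1} via the method of boundary layer potentials for the Stokes system, following Fabes-Kenig-Verchota \cite{FKV-1988} and subsequent refinements. I would represent the solution as
$$
v = \mathcal{D}(g) + \mathcal{S}(h), \qquad \pi = \mathcal{D}_\pi(g) + \mathcal{S}_\pi(h),
$$
where $\mathcal{S}, \mathcal{D}$ are the Stokes single and double layer potentials built from the Stokes fundamental tensor, and $(g,h)$ are boundary densities on $\partial\Omega$ to be determined. Taking nontangential traces, the Dirichlet condition $v = f$ reduces to a boundary singular integral system whose principal part has the form $\tfrac12 I + K$, with $K$ the Stokes boundary double layer operator. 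The task is to invert this system on $L^q(\partial\Omega)$.

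The first step is the $L^2$ theory on a Lipschitz domain, via a Rellich identity. Multiplying $-\Delta v + \nabla\pi = 0$ by $(\beta \cdot \nabla)v$ for a constant transversal vector field $\beta$ and integrating by parts over $\Omega$ yields $\|\partial_n v\|_{L^2(\partial\Omega)} \approx \|\nabla_{\tan} v\|_{L^2(\partial\Omega)}$ modulo terms controlled by the pressure. Together with boundedness of $K$ on $L^2(\partial\Omega)$ (a consequence of the Coifman-McIntosh-Meyer theorem on Lipschitz graphs), this gives invertibility of $\tfrac12 I + K$ on $L^2(\partial\Omega; \R^d)$ modulo a finite-dimensional cokernel, which is pinned down by the normalization $\int_\Omega \pi = 0$ and the connectedness of $\partial\Omega$.

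For $q$ in a neighborhood of $2$ on a Lipschitz domain I would extend by a stability argument. Since the kernel of $K$ is Calderón-Zygmund on $\partial\Omega$, $K$ is bounded on $L^p(\partial\Omega)$ for every $1 < p < \infty$. Boundedness on $L^{q_0}$ and $L^{q_1}$ with $q_0 < 2 < q_1$, combined with invertibility on $L^2$, then yields invertibility on a neighborhood of $q = 2$ via a Sneiberg-type stability of invertibility under complex interpolation (analogous to Lemma \ref{lemma-T3}). Standard nontangential maximal function bounds for layer potentials with $L^q$ densities produce the estimate \eqref{B1-1}. Uniqueness in $L^q$ for $q > 2$ follows from $L^2$ uniqueness via H\"older's inequality on $\partial\Omega$, and for $q < 2$ by a duality argument using existence at the conjugate exponent.

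Finally, for the $C^1$ case I would upgrade to the full range $1 < q < \infty$ via compactness of $K$. On a $C^1$ boundary, the Stokes double layer kernel contains factors of the form $\langle n(y), y-x\rangle$ and differences $n(y) - n(x)$ whose size is governed by the modulus of continuity of the normal, reducing the leading term to a kernel with weaker than Calderón-Zygmund singularity. A Riesz-Kolmogorov-type truncation/approximation scheme then shows $K$ is compact on $L^q(\partial\Omega)$ for every $1 < q < \infty$. The Fredholm alternative, combined with the $L^2$ injectivity already established, inverts $\tfrac12 I + K$ on every $L^q$. The main technical obstacle will be carrying out this compactness argument for the Stokes double layer: unlike the scalar Laplace case treated by Fabes-Jodeit-Rivi\`ere, the Stokes kernel has a coupled tensor-pressure structure, so one must carefully isolate the principal singular part and verify that the remainder has uniformly vanishing modulus of continuity as the scale shrinks, while also handling the interaction between the velocity and pressure components of the layer potentials.
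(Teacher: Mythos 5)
Your proposal tracks the cited literature closely: the paper's own ``proof'' of Theorem~\ref{thm-B1} is a single sentence pointing to Fabes--Kenig--Verchota \cite{FKV-1988} for the Lipschitz range $|1/q-1/2|<\e$ and to \cite{Mitrea-2004} for the full range $1<q<\infty$ in $C^1$ domains, and both references use exactly the layer-potential machinery you describe---Rellich identities on $L^2$, perturbative stability of invertibility for $q$ near $2$ on Lipschitz, and compactness of the boundary operator on $C^1$ to reach all $q$. So you are taking essentially the same route as the sources the paper defers to, just spelled out.

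Two points of imprecision are worth flagging. First, your ansatz $v=\mathcal{D}(g)+\mathcal{S}(h)$ with two free densities is the natural one for the Dirichlet problem with $L^q$ data, not for the regularity problem stated here. Theorem~\ref{thm-B1} is the regularity problem: $f\in W^{1,q}(\partial\Omega)$, conclusion $(\nabla v)^*\in L^q(\partial\Omega)$. In \cite{FKV-1988} this is handled with the single layer potential alone, $v=\mathcal{S}(h)$, and the operator one must invert is $\mathcal{S}:L^q(\partial\Omega)\to W^{1,q}(\partial\Omega)$; with two free densities for one boundary condition you would need to specify a closing relation, which you do not. Second, the Rellich identity for the Stokes system involves the conormal derivative $\partial v/\partial n - \pi n$ rather than $\partial v/\partial n$, so the pressure cannot simply be ``controlled'' as a lower-order term: the identity has to be written so that the pressure enters the boundary quantity itself. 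Your sentence acknowledging ``modulo terms controlled by the pressure'' understates this; getting the bilinear identity right is precisely the nontrivial part of \cite{FKV-1988} compared to the scalar Laplace case. With those corrections your outline matches what the references actually do.
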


\begin{proof}
The proof for the case of Lipschitz domains was given in \cite{FKV-1988}.
See \cite{Mitrea-2004} for the $C^1$ case.
Both papers used the method of layer potentials.
\end{proof}

Let $\{ \Omega_\ell \}$ be a sequence of smooth domains with uniform Lipschitz characters
such that $\Omega_\ell \nearrow \Omega$.
Let $\Lambda_\ell: \partial\Omega \to \partial\Omega_\ell $ be a homeomorphism with the property that 
$\Lambda_\ell (x) \in \gamma (x)$ for $x\in \partial\Omega$ \cite{Verchota-1984}.
It follows from \eqref{B1-1} by the dominated convergence theorem that the solution $(v, \pi)$
in Theorem \ref{thm-B1} satisfies 
\begin{equation}\label{B1-2}
\int_{\partial\Omega}\big \{   |\nabla v \circ \Lambda_\ell  -\nabla v |^q + |\pi \circ \Lambda_\ell -\pi |^q \big\}
\to 0, 
\end{equation}
as $\ell \to \infty$.
We now consider the Dirichlet problem, 
\begin{equation}\label{D-A}
\left\{
\aligned
-\Delta u +\nabla p & = F & \quad & \text{ in } \Omega,\\
\text{\rm div}(u) & =0 & \quad & \text{ in } \Omega,\\
u & =0 & \quad & \text{ on } \partial\Omega,
\endaligned
\right.
\end{equation}
where $F\in C_{0,\sigma}^\infty(\Omega)$.
Let $(w, \phi)\in W^{2, q}(\R^d; \C^d)\times W^{1, q} (\R^d, \C)$ be a solution of
\begin{equation}
-\Delta w +\nabla \phi =F \quad \text{ and } \quad \text{\rm div}(w) =0 \quad \text{ in } \R^d.
\end{equation}
Using the fundamental theorem of calculus, it is not hard to show that
\begin{equation}\label{B1-4}
\int_{\partial\Omega}\big\{   |\nabla w \circ \Lambda_\ell  -\nabla w  |^q + |\phi\circ \Lambda_\ell -\phi|^q \big\}
\to 0, 
\end{equation}
as $\ell \to \infty$.
Note that $(v, \pi)= (u-w, p-\phi)$ is a solution of \eqref{B1-0} in $\Omega$ with boundary data $-w\in W^{1, q}(\partial\Omega; \C^d)$.
It follows from \eqref{B1-2} and \eqref{B1-4} that  $\nabla u$ and $p$ are well defined on $\partial\Omega$ and that 
\begin{equation}\label{B1-5}
\int_{\partial\Omega}\big\{   |\nabla u \circ \Lambda_\ell  -\nabla u |^q + |p\circ \Lambda_\ell -p|^q \big\} 
\to 0,
\end{equation}
as $ \ell \to \infty$,
where $|q-2|< \e$ if $\Omega$ is Lipschitz, and $1<q< \infty$ if $\Omega$ is $C^1$.
The observation \eqref{B1-5} was used in several approximation arguments.

%%%%%%%%%%%%%%

\section{A divergence problem for a Lipschitz graph domain}

In this appendix we provide  a proof for the following theorem, which is used in the proof  of Lemma \ref{lemma-T2}.

\begin{thm}\label{thm-A2}
Let $g \in B_\psi^q  = L^q (H_\psi; \C)\cap \hW^{-1, q}(\H_\psi; \C)$, where $1< q< \infty$ and 
$\psi:\R^{d-1} \to \R$ is a Lipschitz function with $\|\nabla^\prime \psi \|_\infty \le M$.
Then there exists $w \in W^{1,q}_0(\H_\psi; \C^d)$ such that $\text{\rm div} (w) =g$ in $\H_\psi$  in the sense of distributions and 
\begin{equation}\label{T1-0}
\|\nabla w \|_{L^q(\H_\psi)} + 
\| w \|_{L^q(\H_\psi)}
\le C\left\{ \| g \|_{L^q(\H_\psi)} +  \| g \|_{\hW^{-1, q}(\H_\psi)} \right\},
\end{equation}
where $C$ depends only on $d$, $q$ and $M$.
\end{thm}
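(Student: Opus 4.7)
The plan is to reduce the divergence problem on $\H_\psi$ to the corresponding problem on the half-space $\R^d_+$ via a bilipschitz flattening, and then construct the half-space solution directly. Introduce $\Theta: \R^d_+ \to \H_\psi$ by $\Theta(y', y_d) = (y', y_d + \psi(y'))$. Then $\Theta$ is bilipschitz with constants depending only on $M = \|\nabla'\psi\|_\infty$, and $\det D\Theta \equiv 1$. Setting $\tilde g := g \circ \Theta$, a change of variables gives $\|\tilde g\|_{L^q(\R^d_+)} = \|g\|_{L^q(\H_\psi)}$; for the dual norm, $\eta \mapsto \eta \circ \Theta$ is an isomorphism of $\hW^{1,q'}(\H_\psi)$ onto $\hW^{1,q'}(\R^d_+)$ with norm bounds depending only on $M$, so by duality $\|\tilde g\|_{\hW^{-1,q}(\R^d_+)} \le C(M)\|g\|_{\hW^{-1,q}(\H_\psi)}$.

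Next I would establish the theorem for $\psi \equiv 0$. Extend $\tilde g$ by even reflection in $y_d$ to $\bar g$ on $\R^d$; this preserves the $L^q$ norm, and via the symmetrization $\phi \mapsto \tfrac12(\phi(y',y_d) + \phi(y',-y_d))$ in the dual pairing it also preserves the $\hW^{-1,q}$ norm up to a constant. Solve $\Delta \Psi = \bar g$ on $\R^d$ by the Riesz potential. Calder\'on--Zygmund theory gives $\|\nabla^2\Psi\|_{L^q(\R^d)} \le C\|\bar g\|_{L^q}$, while the definition of $\hW^{-1,q}$ yields $\|\nabla\Psi\|_{L^q(\R^d)} \le C\|\bar g\|_{\hW^{-1,q}}$. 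The candidate $\tilde w_0 := \nabla\Psi|_{\R^d_+}$ then solves $\text{\rm div}\,\tilde w_0 = \tilde g$ with the correct norm bounds, and evenness of $\Psi$ in $y_d$ ensures the normal component $\partial_d\Psi$ vanishes on $\{y_d = 0\}$. One then subtracts a divergence-free correction $v \in W^{1,q}(\R^d_+; \C^d)$ matching the tangential boundary trace of $\tilde w_0$, existing with $\|v\|_{W^{1,q}} \le C\|\nabla^2\Psi\|_{L^q}$ by standard trace theory; the desired solution is $\tilde w := \tilde w_0 - v$.

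Finally, I transport the half-space solution back to $\H_\psi$ by the Piola transform $w(x) := D\Theta(\Theta^{-1}(x))\,\tilde w(\Theta^{-1}(x))$. Since $\det D\Theta = 1$, the Piola identity gives $\text{\rm div}_x\, w = (\text{\rm div}_y\,\tilde w)\circ\Theta^{-1} = g$; the zero boundary condition is preserved since $\Theta$ maps $\partial\R^d_+$ onto $\partial\H_\psi$; and uniform bounds on $D\Theta$ and $D\Theta^{-1}$ give
\[
\|w\|_{L^q(\H_\psi)} + \|\nabla w\|_{L^q(\H_\psi)} \le C(M)\bigl(\|\tilde w\|_{L^q(\R^d_+)} + \|\nabla\tilde w\|_{L^q(\R^d_+)}\bigr),
\]
which combined with the half-space estimate delivers \eqref{T1-0}.

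The main obstacle is the tangential correction in the half-space construction: a classical Bogovskii-type operator yields only the $L^q$ half of the estimate. The key insight is to route the construction through the second-order potential $\Psi$ so that the $\hW^{-1,q}$ norm of $\tilde g$ enters the bound transparently as $\|\nabla\Psi\|_{L^q}$, after which the tangential correction is controlled by the standard homogeneous trace theory without disrupting this structure.
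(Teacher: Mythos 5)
Your reduction to the half-space has a structural obstruction that, for Lipschitz $\psi$, cannot be repaired: the Piola transform does not map $W^{1,q}_0(\R^d_+;\C^d)$ into $W^{1,q}_0(\H_\psi;\C^d)$. Write out the last component: with $y=\Theta^{-1}(x)=(x',x_d-\psi(x'))$ you get $w_d(x)=\partial_i\psi(x')\,\tilde w_i(y)+\tilde w_d(y)$, and any $x'$-derivative of $w_d$ produces the term $(\partial_j\partial_i\psi)(x')\,\tilde w_i(y)$. For merely Lipschitz $\psi$ the Hessian $\nabla'^2\psi$ is a distribution (a measure, or worse), not an $L^q$ function, so $\nabla w\notin L^q(\H_\psi)$ in general — consider $\psi(x')=|x'_1|$ in the plane, where this term is a measure concentrated on a line. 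The distributional identity $\operatorname{div}_x w=(\operatorname{div}_y\tilde w)\circ\Theta^{-1}$ does hold without extra regularity of $\psi$ (your Piola computation is fine), but the theorem asserts $w\in W^{1,q}_0(\H_\psi;\C^d)$ with the gradient bound \eqref{T1-0}, and your construction does not deliver that. This is exactly why the Bogovski\u{\i}-type approaches in the Lipschitz literature, and the paper here, avoid boundary-flattening and instead exhaust $\H_\psi$ by bounded star-shaped subdomains $E_\ell$, apply the Bogovski\u{\i} operator (which needs only a single star center) on each with a constant depending only on $\operatorname{diam}(E_\ell)/r_0$ — uniform in $\ell$ — and pass to a weak limit.

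A secondary gap, though one that is probably repairable with additional work, is the phrase \emph{``standard trace theory''} for the divergence-free tangential correction $v$. Standard trace/extension theory produces a $W^{1,q}$ lift of the tangential datum $\nabla'\Psi|_{y_d=0}$, but does not make it solenoidal. Building a \emph{divergence-free} lift with zero normal trace and $W^{1,q}$ control is a genuine solenoidal-lifting problem: the natural route is through a vector potential (stream function for $d=2$, antisymmetric matrix potential for $d\ge 3$) or through a Bogovski\u{\i} correction, and the latter is circular here. Also, the claimed bound should read $\|v\|_{W^{1,q}(\R^d_+)}\le C(\|\nabla\Psi\|_{L^q}+\|\nabla^2\Psi\|_{L^q})$, since the trace norm of $\nabla'\Psi$ on $\{y_d=0\}$ requires the full $W^{1,q}$ norm of $\nabla\Psi$, not only its seminorm — though this is harmless for \eqref{T1-0} since both norms of $g$ appear on the right-hand side. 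Your half-space reduction (even reflection, Riesz potential, $\nabla\Psi\in L^q$ from $\bar g\in\hW^{-1,q}$ via $\bar g=\operatorname{div}(h)$) is essentially sound in isolation and is a nice way to make the dual norm enter transparently; the two obstructions above are the flattening map's loss of regularity and the solenoidal lift. If you want a route in the spirit of your argument, you would need either $\psi\in W^{2,\infty}$ for the Piola step, or to work directly on $\H_\psi$ with a star-shaped decomposition as in the paper's proof.
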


\begin{lemma}\label{lemma-A2-1}
Let $\Omega$ be a bounded domain in $\R^d$ and $1< q< \infty$.
Suppose that $\Omega$ is  star-like with respect to every point of $B(x_0, r_0)$ for some $B(x_0, r_0)\subset \Omega$.
Then for any $f\in L^q(\Omega)$ with $\int_\Omega f =0$, there exists $u \in W^{1, q}_0(\Omega)$ such that
$\text{\rm div}(u) =f$ in $\Omega$ and 
\begin{equation}\label{A2-00}
\| \nabla u \|_{L^q(\Omega)} \le C \| f\|_{L^q(\Omega)}, 
\end{equation}
\begin{equation}\label{A2-0}
\|  u \|_{L^q(\Omega)}
\le C \| f \|_{\hW^{-1, q}(\Omega)}.
\end{equation}
Moreover, the constants $C$ in \eqref{A2-00}-\eqref{A2-0} satisfy 
\begin{equation}\label{A2-1}
C\le C_0 \left[ \text{\rm diam}(\Omega)/r_0\right]^d 
\left( 1 + \text{\rm diam}(\Omega)/r_0 \right),
\end{equation}
where $C_0$ depends on $d$ and $q$.
\end{lemma}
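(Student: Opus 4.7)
The plan is to use Bogovskii's explicit construction of a right inverse to the divergence. Fix a nonnegative $\omega \in C_0^\infty(B(x_0, r_0))$ with $\int_{\R^d}\omega = 1$ and $\|\omega\|_{L^\infty} \le C r_0^{-d}$, and for $f \in L^q(\Omega)$ with $\int_\Omega f = 0$ define
\[
u(x) = \int_\Omega f(y)\, N(x,y)\, dy, \qquad
N(x,y) = (x-y)\int_1^\infty \omega\bigl(y + s(x-y)\bigr)\, s^{d-1}\, ds.
\]
Because $\Omega$ is star-like with respect to every point of $B(x_0, r_0) \supset \mathrm{supp}(\omega)$, the kernel $N(\cdot, y)$ is supported in $\overline{\Omega}$ for each $y \in \Omega$, so $u$ vanishes outside $\Omega$. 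A routine change of variables (substituting $z = y + s(x-y)$) yields the identity $\mathrm{div}(u) = f - \omega \int_\Omega f = f$.

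For the gradient bound \eqref{A2-00}, I would follow Bogovskii's classical computation: the kernel $\partial_{x_k} N_j(x,y)$ splits into a principal Calder\'on--Zygmund kernel of order $|x-y|^{-d}$, with vanishing spherical means and controlled H\"ormander constants, plus an integrable remainder whose $L^\infty$ norm is controlled by $\|\nabla\omega\|_{L^\infty}$. The $L^q$-boundedness of Calder\'on--Zygmund operators for $1 < q < \infty$ then gives $\|\nabla u\|_{L^q(\Omega)} \le C\|f\|_{L^q(\Omega)}$, and a Poincar\'e inequality on $\Omega$ upgrades the conclusion to $u \in W^{1,q}_0(\Omega)$.

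For the negative-norm estimate \eqref{A2-0}, I would argue by duality against the formal adjoint
\[
(B^*\phi)(y) = \int_\Omega N(x,y)\cdot\phi(x)\, dx, \qquad \phi \in L^{q'}(\Omega;\C^d).
\]
The same kernel analysis performed in the $y$-variable shows $\|\nabla(B^*\phi)\|_{L^{q'}(\Omega)} \le C\|\phi\|_{L^{q'}(\Omega)}$, so $B^*\phi \in \hW^{1,q'}(\Omega)$ modulo an additive constant. Since $\int_\Omega f = 0$ the constant is annihilated by the pairing, and
\[
\Bigl|\int_\Omega u\cdot\phi\, dx\Bigr| = \bigl|\langle f,\, B^*\phi\rangle\bigr| \le \|f\|_{\hW^{-1,q}(\Omega)}\, \|\nabla(B^*\phi)\|_{L^{q'}(\Omega)} \le C\|f\|_{\hW^{-1,q}(\Omega)}\, \|\phi\|_{L^{q'}(\Omega)}.
\]
Taking the supremum over $\phi$ with $\|\phi\|_{L^{q'}} \le 1$ yields \eqref{A2-0}.

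The quantitative dependence \eqref{A2-1} is obtained by rescaling $x \mapsto r_0^{-1}x$, which reduces matters to the case where the inner ball has unit radius and $\omega$ is of unit scale. The factor $[\mathrm{diam}(\Omega)/r_0]^d$ enters the CZ constants through the size of the support of $N$ (the integration over the support of $\omega$ introduces a volume factor that scales as $r_0^{-d}$ relative to the geometry), while the additional factor $1 + \mathrm{diam}(\Omega)/r_0$ comes from the Poincar\'e step needed to pass from $\|\nabla u\|_{L^q}$ to $\|u\|_{L^q}$. The main technical obstacle is the kernel analysis of $\nabla N$ and $\nabla_y N$, namely verifying the pointwise size bound together with a H\"ormander smoothness condition with constants that track explicitly through the rescaling; once this classical step is in hand, everything else is a direct application of singular integral theory, duality, and careful scale tracking.
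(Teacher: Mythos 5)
Your proposal is correct and follows essentially the same route as the paper: the explicit Bogovski\u{\i} kernel, the Calder\'on--Zygmund decomposition of $\nabla_x N$ for \eqref{A2-00}, and duality against the adjoint kernel (using that $\nabla_y N + \nabla_x N$ is only weakly singular) for \eqref{A2-0}, with the constant tracked by rescaling to $r_0=1$. The only loose point is attributing the factor $1+\mathrm{diam}(\Omega)/r_0$ to a Poincar\'e step; in fact \eqref{A2-0} is not obtained from \eqref{A2-00} via Poincar\'e but directly from the duality argument, and the extra factor arises from the weakly singular remainder terms in the kernel analysis, but this does not affect the validity of the argument.
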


\begin{proof}
This is a classical result of M.E. Bogovski\u{\i} \cite{Bogo}. 
We point out that although the estimate \eqref{A2-0} is not usually stated for bounded domains,
it follows from the Bogovski\u{\i} formula.
For the reader's convenience, we sketch a proof, following the presentation in \cite[pp.162-167]{Galdi}.

By translation and dilation, we may assume $x_0=0$ and $r_0=1$.
Choose $\omega\in C_0^\infty(B(0, 1))$ such that $\int_{B(0, 1)} \omega =1$.
Let
$$
u(x)=\int_\Omega K(x, y) f(y)\, dy,
$$
where $K(x, y)=(K_1(x, y), \dots, K_d (x, y))$ and
\begin{equation}\label{A2-3}
\aligned
K_i(x, y)
 & =(x_i -y_i) \int_1^\infty \omega (y + t (x-y)) t^{d-1} \, dt\\
 &=
 \frac{x_i -y_i}{|x-y|^d}
\int_0^\infty \omega \Big( x+ t \frac{x-y}{|x-y|}\Big)
\left( |x-y|+ t\right)^{d-1}\, dt.
\endaligned
\end{equation}
Note that
$$
|K(x, y)|\le C |x-y|^{1-d} \quad
\text{ and } \quad
|\nabla_x K(x, y)| + |\nabla_y K(x, y)| \le C |x-y|^{-d}.
$$
Moreover, 
$$
\partial_j  u_i (x)
= f(x)\int_\Omega \frac{(x_j -y_j)(x_i-y_i)}{|x-y|^2} \omega(y)\, dy + T_{ij} (f) (x),
$$
where
$$
T_{ij} (f) (x)=\text{p.v.}  \int_\Omega \frac{\partial}{\partial x_j}
K_i(x, y) f(y)\, dy.
$$
Furthermore,  using \eqref{A2-3}, 
a direct computation shows that
\begin{equation}\label{A2-4}
\aligned
\frac{\partial}{\partial x_j} K_i (x, y)
& =\frac{\delta_{ij}}{|x-y|^d}
\int_0^\infty \omega\Big(x + t \frac{x-y}{|x-y|}\Big) \big(|x-y| + t\big)^{d-1}\, dt\\
&\quad
+\frac{x_i-y_i}{|x-y|^{d+1}}
\int_0^\infty \partial_j \omega \Big (x+ t \frac{x-y}{|x-y|}\Big)
( |x-y| +t)^{d}\, dt,
\endaligned
\end{equation}
\begin{equation}\label{A2-5}
\frac{\partial}{\partial y_j} K_i (x, y)=
-\frac{\partial}{\partial x_j}K_i (x, y)
+\frac{x_i -y_i}{|x-y|^d}
\int_0^\infty \partial_j \omega \Big( x+ t \frac{x-y}{|x-y|}\Big) 
\big( |x-y| + t)^{d-1}\, dt,
\end{equation}
and that $\text{\rm div} (u)= f$ in $\Omega$.
It follows by expanding the term $(|x-y| +t)^d$ in \eqref{A2-4}  that 
$$
\frac{\partial}{\partial x_j} K_i  (x, y) = \frac{k_{ij} (x, x-y)}{|x-y|^d}  + G_{ij} (x, y), 
$$
where $k_{ij}(x, z)$ and $G_{ij}(x, y)$ satisfy the conditions, $ |k_{ij} (x, z)|\le C$, $k_{ij}(x, tz)=k_{ij}(x, z)$ for $t>0$, 
$$
\int_{|z|=1} k_{ij} (x, z) dz =0 \quad \text{ and }  \quad
|G_{ij} (x, y) |\le C |x-y|^{1-d}.
$$
As a result, it  follows from the Calder\'on-Zygmund theory of singular integrals that 
$\| T_{ij} (f)  \|_{L^q(\Omega)} \le C \| f \|_{L^q(\Omega)}$. This gives  the estimate \eqref{A2-00}.

Finally, to prove \eqref{A2-0}, note that
 for $h=(h_1, h_2, \dots, h_d) \in C_0^\infty(\Omega; \R^d)$,
$$
\aligned
\Big|\int_\Omega u \cdot h \Big|
& = \Big| \int_\Omega f(y) \left\{ \int_\Omega K_i (x, y) h_i (x)\, dx \right\} dy \Big|\\
& \le \| f\|_{\hW^{-1, q}(\Omega)}
\| \nabla_y \int_\Omega K_i (x, y) h_i (x)\, dx \|_{L^{q^\prime} (\Omega)}.
\endaligned
$$
Moreover, 
$$
\frac{\partial }{\partial y_j}
\int_\Omega  K_i (x, y) h_i (x)\, dx
= h_i (y) a_{ij} (y) + \text{\rm p.v.}
\int_\Omega \frac{\partial}{\partial y_j} K_i (x, y) h_i (x)\, dx,
$$
where $|a_{ij} (y)|\le C $, and by \eqref{A2-5},
$$
\Big|\frac{\partial }{\partial y_j } K_i (x, y) + \frac{\partial }{\partial x_j} K_i (x, y) \Big|
\le C |x-y|^{1-d}.
$$
It follows that
$$
\aligned
\| \nabla_y \int_\Omega K_i (x, y) h_i (x)\, dx \|_{L^{q^\prime} (\Omega)}
 & \le C \| h\|_{L^{q^\prime} (\Omega)}
+ C  \sum_{i, j}  \| T_{ij}^* ( h_i) \|_{L^{q^\prime} (\Omega)}\\
& \le C \| h \|_{L^{q^\prime} (\Omega)}, 
\endaligned
$$
where $T_{ij}^*$ denotes the adjoint of $T_{ij}$. 
By duality we deduce that  $\| u \|_{L^q(\Omega)} \le C \|f \|_{\hW^{-1, q}(\Omega)}$.
\end{proof}

\begin{proof}[Proof of Theorem \ref{thm-A2}]

 For $r >0 $, define
$$
E_r  = \left\{ (x^\prime, x_d): \ |x^\prime|< r   \text{ and }  \psi (x^\prime) < x_d < C_0 r   \right\}.
$$
There exist $C_0\ge 1$ and $c_0\in (0, 1)$, depending only on $d$ and $M$, such that  $E_r$ is star-like with respect to 
every point in the ball centered at $(0, (1/2) C_0 r)$ with radius $c_0r$ \cite {JK-1982}.
By Lemma \ref{lemma-A2-1} there exists  a function $u_\ell \in W^{1, q}_0(E_\ell; \C^d )$ for $\ell \ge 1$  such that
$$
\text{\rm div}(w_\ell ) = g -\fint_{E_\ell} g   \quad \text{ in } E_\ell,
$$
\begin{equation}\label{T1-2}
\| \nabla w_\ell \|_{L^q(E_\ell)} \le C \| g \|_{L^q(E_\ell)}, 
\end{equation}
and 
\begin{equation}\label{T1-3}
\| w_\ell \|_{L^q(E_\ell)} \le C \| g \|_{\hW^{-1, q}(E_\ell)},
\end{equation}
where $C$ depends only on $d$, $q$ and $M$.

Next, we extend $w_\ell$ to $\H_\psi$ by zero.
Note  that 
$$
\|  w_\ell \|_{W_0^{1, q}(\H_\psi)} \le C  \left\{ \| g \|_{L^q(E_\ell)} + \| g \|_{\hW^{-1, q}(\H_\psi)} \right\}.
$$
It follows that there exists a subsequence, still denoted by $\{ w_\ell \}$ such that
$w_\ell \to w$ weakly in $W_0^{1, q}(\H_\psi; \C^d)$ for some $w\in W_0^{1, q}(\H_\psi; \C^d)$.
Moreover, the estimate \eqref{T1-0} holds. 
To show $\text{\rm div}(w) =g$ in $\H_\psi$, 
let $\varphi \in C_0^\infty(\H_\psi)$. Note that if $\ell$ is large,
$$
\int_{\H_\psi} w_\ell  \cdot \nabla \varphi = -\int_{\H_\psi} \Big( g-\fint_{E_\ell} g \Big) \varphi.
$$
 By letting $ \ell \to \infty$, we obtain 
 $$
 \int_{\H_\psi} w \cdot \nabla \varphi = -\int_{\H_\psi}  g  \varphi,
$$
where we have used the fact $\fint_{E_\ell} g \to 0$ for $g \in L^q(\H_\psi)$.
This completes the proof.
\end{proof}

%%%%%%%%%%%%%%%%%%%%%%

\noindent{\bf Conflict of interest.}  The authors declare that there is no conflict of interest.

\medskip

\noindent{\bf Data availability. }
 Data sharing not applicable to this article as no datasets were generated or analyzed during the current study.
 
\medskip

\noindent{\bf Acknowledgement.}
The authors thank the anonymous referees for helpful comments that improved the quality of the manuscript.

 \bibliographystyle{amsplain}
 
% \bibliography{Geng-Shen2024.bib}
 \bibliography{Geng-Shen2024b.bbl}

\smallskip

\begin{flushleft}

Jun Geng, School of Mathematics and Statistics, Lanzhou University, Lanzhou, P.R. China.\\
\emph{E-mail}:  \texttt{gengjun@lzu.edu.cn}

\medskip

Zhongwei Shen (corresponding author), Institute for Theoretical Sciences, Westlake University,
No. 600 Dunyu Road, Xihu District, Hangzhou, Zhejiang, 310030, P.R. China.\\
\emph{E-mail}: \texttt{shenzhongwei@westlake.edu.cn} \\

\end{flushleft}

\bigskip

\medskip

\end{document}